\newtheorem {theo}{Theorem}[section]
\newtheorem {theo*}{Theorem}[]
\newtheorem {lemme}[theo]{Lemma}
\newtheorem {lemme*}[theo*]{Lemma}
\newtheorem {prop}[theo]{Proposition}
\newtheorem {prop*}[theo*]{Proposition}
\newtheorem {cor_proof}[theo]{Corollary (of the proof)}
\newtheorem {cor*}[theo*]{Corollary}
\newtheorem {cor_proof*}[theo*]{Corollary (of the proof)}
\newtheorem {conjecture}{Conjecture}[]
\theoremstyle{definition}
\newtheorem {defi}[theo]{Definition}
\newtheorem {defi*}[theo*]{Definition}
\newtheorem {nota*}[theo*]{Notation}
\newtheorem {question*}[theo*]{Question}
\theoremstyle{remark}
\newtheorem {remarque}[theo]{Remark}
\newtheorem {remarque*}[theo*]{Remark}
\newtheorem {exemple*}[theo*]{Example}
\newtheorem {fact*}[theo*]{Fact}
\def\e{\varepsilon}
\def\I{{\mathcal I}}
\def\J{{\mathcal J}}
\def\L{{\mathcal L}}
\def\N{{\mathbb N}}
\def\O{{\mathbb O}}
\def\R{{\mathbb R}}
\def\S{{\mathbb S}}
\def\T{{\mathcal T}}
\def\X{{\mathbb X}}
\def\Z{{\mathbb Z}}
\def\2Z{{\fract{\Z}/{2\Z}}}
\def\p{\partial}
\def\Id{{\textnormal{Id}}}
\def\Int{{\textnormal{Int}}}
\def\deg{{\textnormal{deg}}}
\def\gr{{gr}}
\def\ie{{\it i.e. }}
\def\rk{{\textnormal{rk}}}
\newcommand{\noi}{\noindent}
\newcommand{\disp}{\displaystyle}
\newcommand{\saut}{\vspace{\baselineskip}}
\def\fract#1/#2{\hbox{\leavevmode
  \kern.1em \raise .25ex \hbox{\the\scriptfont0 $#1$}\kern-.1em }\big/
  {\hbox{\kern-.15em \lower .5ex \hbox{\the\scriptfont0 $#2$}} }}
\newcommand{\dessin}[2]{
  \vcenter{\hbox{\includegraphics[height=#1]{#2.pdf}}}}
\newcommand{\dessinH}[2]{
  \vcenter{\hbox{\includegraphics[width=#1]{#2.pdf}}}}
\newcommand{\func}[3]{
  #1 \colon #2 \longrightarrow #3}
\newcommand{\function}[5]{
  #1 \colon
  \begin{array}{ccc}
    #2 & \longrightarrow & #4\\[.2cm]
    #3 & \longmapsto & #5
  \end{array}}
\def\Pol{{\textnormal{Pol}^\circ}}
\def\PolC{{\textnormal{Pol}_C^\circ}}
\def\Rect{{\textnormal{Rect}^\circ}}
\begin{document}

\title{Singular link Floer homology}
\author{Benjamin \textsc{Audoux}}
\date{\today}
\address{Section de Math\'ematiques, Unige\\ rue du li\`evre 2-4, 1211 Gen\`eve 4, Switzerland}
\email{benjamin.audoux@unige.ch}

\begin{abstract}
We define a grid presentation for singular links, \ie links with a finite number of rigid transverse double points. Then we use it to generalize link Floer homology to singular links. Besides the consistency of its definition, we prove that this homology is acyclic under some conditions which naturally make its Euler characteristic vanish.
\end{abstract}

\maketitle

% Introduction
\section*{Introduction}
\label{sec:Introduction}

Since the Jones polynomial was categorified in 1999 by Mikhail Khovanov \cite{Khovanov}, knot and link invariants of homological type have constantly been growing to become one of the most flourishing and promising fields in knot theory.
Practically, categorifying a polynomial invariant $\lambda$ means, for a presentation of a link $L$, defining a graded chain complex $\L=(\L_i^j)_{i,j\in\Z}$ such that
\begin{enumerate}
\item[i)] the graded Euler characteristic $\xi_\gr (\L) = \displaystyle{\sum_{i,j}}(-1)^i \rk (\L_i^j)q^j$ is equal to $\lambda(K)$;
\item[ii)] the homology $H_*(\L)$ depends only on the link $L$.
\end{enumerate}

Categorification is worthwhile since it defines a new link invariant which sharpens the information given by its single Euler characteristic.
Typically, $H_*(\L)$  distinguishes more links and provides equalities when $\lambda$ gives only bounds.
Moreover, it is usually endowed with good functorial properties with regard to the category of cobordisms.

\saut

In this context, the question of a Vassiliev-like theory for invariants of homological type has been raised.
Actually, any link polynomial invariant can be naturally extended to singular links, \ie links with a finite number of rigid transverse double points, using the following recursive formula
\begin{equation}
  \lambda(\dessin{.5cm}{Double}) := \lambda(\dessin{.5cm}{Pos}) - \lambda(\dessin{.5cm }{Neg}),
  \label{eq:Vassiliev}  
\end{equation}
where the three pictures should be understood as pieces of link diagrams which are identical outside the represented crossing.

\emph{Finite type invariants of order $k\in \N$} are then defined as the polynomial invariants which vanish for every knot with at least $k+1$ double points.
Finiteness defines a filtration on polynomial invariants.
Most known invariants are combinations of finite type invariants \cite{Birman}.
To date, the question is still open to know whether the union of all finite type invariants is strong enough to distinguish all knots.

A similar theory for link invariants of homological type should imply an exact triangle
$$
\xymatrix@!0@C=1.5cm@R=1.5cm{
  H_*\left(\L\left(\dessin{.5cm}{Pos}\right)\right) \ar[rr] && H_*\left(\L\left(\dessin{.5cm}{Neg}\right)\right) \ar[dl]\\
  & H_*\left(\L\left(\dessin{.5cm}{Double}\right)\right) \ar[ul]&
}.
$$
which categorifies the relation (\ref{eq:Vassiliev}).
Exact triangles arise naturally when dealing with mapping cone of chain maps.
Hence, as proposed by N. Shirokova \cite{Shiro}, a strategy can be to consider a wall-crossing map $\func{f}{\L\left(\dessin{.5cm}{Pos}\right)}{\L\left(\dessin{.5cm}{Neg}\right)}$.
Unfortunately, there is no canonical way to define such a map and, among the candidates, a selection has be done.
Polynomial invariants automatically vanish for links with a singular loop.
It is then justified to require that the homology also vanishes for such links.
$$
\lambda\left(\ \dessin{.9cm}{SmallLoop}\right) = 0 \ \xrightarrow{requirement}\ H_*\left(\L\left(\ \dessin{.9cm}{SmallLoop}\right)\right) \equiv 0
$$

\saut

The purpose of this paper is to prove the following theorem:
\begin{theo*}
  There exists a generalization $\widehat{HFV}$ of the link Floer homology $\widehat{HF}$ with $\Z$ coefficients to singular links with oriented double points, which categorifies the relation (\ref{eq:Vassiliev}) and vanishes for links with a singular loop.
\end{theo*}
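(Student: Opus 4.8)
\medskip
\noindent\emph{Plan of proof.}\ The strategy is to transpose to the singular setting the combinatorial model of link Floer homology of Manolescu--Ozsv\'ath--Sarkar and Manolescu--Ozsv\'ath--Szab\'o--Thurston. First I would fix a notion of \emph{singular grid presentation} --- an ordinary grid diagram together with a local gadget at each rigid double point --- and prove the accompanying Cromwell-type statement: every singular link admits such a presentation, and any two presentations of a given singular link are related by a finite sequence of commutations, (de)stabilizations, and a short list of ``singular moves'' realizing the singular Reidemeister moves. The gadget is to be engineered with one precise goal: that the complex attached to a singular grid be, tautologically, an iterated mapping cone --- one cone per double point --- of a ``crossing-change'' map between the grid complexes of the two resolutions appearing in (\ref{eq:Vassiliev}); such a map can itself be assembled from the standard grid cobordism (saddle) maps. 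Equivalently, one first constructs these maps and then \emph{defines} $\widehat{HFV}$ of the singular grid as the resulting iterated cone.

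Then comes the complex. From a singular grid presentation $G$ of size $n$, the generators are the usual grid states (the $n!$ permutations, viewed as $n$-point subsets of the grid torus meeting each horizontal and each vertical curve once), and the differential counts empty rectangles equipped with a Manolescu--Ozsv\'ath--Szab\'o--Thurston sign assignment, subject to the usual conditions at the $O$- and $X$-markings away from the double points and to the modified local rule imposed by the gadgets; independence of the sign assignment and of the marking singled out in the hat construction is checked along the way. The bigrading $(i,j)$ --- a Maslov-type homological grading and an Alexander-type quantum grading --- restricts to the usual one away from the double points and is normalised so that the gadgets contribute exactly the shift dictated by the cone description. One then runs the standard verifications, in this order: (a) $\partial^2 = 0$ over $\Z$, via the usual examination of the ends of one-parameter families of rectangles, the only new contributions being compositions of two rectangles that interact with a gadget, which must be shown to cancel in pairs; (b) the differential lowers $i$ by one with the prescribed effect on $j$, and $\xi_\gr\big(\widehat{HFV}(G)\big)$ depends only on the singular link and satisfies the skein relation (\ref{eq:Vassiliev}), which reduces to computing the local contribution of one double point and comparing with $\widehat{HF}$ of its two resolutions; (c) invariance of $H_*\big(\widehat{HFV}(G)\big)$ under all the moves of the Cromwell-type theorem. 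For commutations and (de)stabilizations this is the grid-homology argument of Manolescu--Ozsv\'ath--Szab\'o--Thurston, carried out so as to drag the gadgets along; the new content --- and the step I expect to be the main obstacle --- is invariance under the singular moves: each must be realised by an explicit quasi-isomorphism (a pentagon- or hexagon-counting map, in the spirit of the Reidemeister maps of grid homology), proved to be a chain map, to induce an isomorphism on homology, and to be compatible both with the gadgets and with the two gradings.

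Finally, the categorification of (\ref{eq:Vassiliev}) is built in: a singular grid that differs from two ordinary grids only at a single double point exhibits $\widehat{HFV}$ of the singular link as the mapping cone of a map $\widehat{HF}(\text{over-resolution})\to\widehat{HF}(\text{under-resolution})$, so the tautological short exact sequence of complexes underlying this cone yields, after the grading bookkeeping that makes the graded Euler characteristics match (\ref{eq:Vassiliev}), exactly the exact triangle drawn in the introduction. For the vanishing statement, I would choose a singular grid in which the singular loop appears as a small local configuration; its two resolutions are then obtained from each other by a positive and a negative Reidemeister~I move, so --- modulo the Reidemeister~I isomorphisms produced in step (c) --- the map defining the cone is an isomorphism, hence the cone, and therefore $\widehat{HFV}$ of the entire singular link, is acyclic. (Alternatively, one writes down an explicit null-homotopy of the identity, supported near the loop.) This is the acyclicity claimed in the abstract: a singular loop forces $\lambda = 0$, hence $\xi_\gr\big(\widehat{HFV}(G)\big) = 0$, and in fact $\widehat{HFV}$ itself vanishes.
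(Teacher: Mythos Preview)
Your broad strategy matches the paper's: singular grid diagrams, a Cromwell--Dynnikov theorem with extra ``singular'' moves, and an invariant built as an iterated mapping cone over the double points. The paper realises this concretely via \emph{singular RoCs} (rows or columns carrying four decorations) equipped with a pair of winding arcs $(\alpha,\beta)$ whose two resolutions give the positive and negative crossings; the cone map then counts \emph{pentagons} (and higher grid polygons) with a corner at a distinguished point $p\in\alpha\cap\beta$, and the choice among the admissible such points is exactly where the orientation of the double point enters. Your proposal to assemble the crossing-change map from saddle cobordism maps is a genuinely different recipe---it factors through the oriented smoothing, which has a different component count and hence a different $\widehat{\ \ }$ normalisation---and you would have to check separately that the resulting cone is well-defined and that the orientation datum survives the invariance proofs. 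Note also that ``the usual $n!$ grid states'' is inconsistent with the iterated-cone picture you describe a few lines earlier: the module is $\bigoplus_{I\in\{0,1\}^k}C^-(G_I)$, one summand per global desingularisation.

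The substantive gap is the acyclicity argument. You correctly observe that the two resolutions of a singular loop are each one Reidemeister~I move away from the underlying link, so source and target of the cone map $f_p$ have isomorphic homology. But that does not make $f_p$ a quasi-isomorphism: you would need $f_p$ to be chain-homotopic to the composite of the two RI quasi-isomorphisms from your step~(c), and nothing in the construction gives you this---those maps come from the (de)stabilisation proof and count objects quite unlike the pentagons defining $f_p$. The paper does \emph{not} argue this way. It fixes a specific local grid model for the loop, introduces a ``crushing'' filtration that collapses the extra rows and column created by the loop, and then shows the associated graded complex decomposes into an explicit finite list of acyclic subcomplexes. Your parenthetical alternative (an explicit local null-homotopy) is closer in spirit to what is actually required, but it is the real content of the vanishing theorem, not a formality.
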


An orientation for a double point is a choice of orientation for the plane spanned by the two tangent vectors at this double point.
Link Floer homology is a categorification of the Alexander polynomial.
It has been defined in 2004 by Peter Ozsv\'ath \& Zoltan Szab\'o \cite{OS2} and, independantly, by Jacob Rasmussen \cite{Rasmussen}.
It appeared to be particularly rich since it detects the unknot, the trefoils, the figure eight knot, fiberedness and Seifert genus.
In 2006, link Floer homology has been given an alternative description, which is combinatorial in nature \cite{MOS}, \cite{MOST}.
In the next paragraph, we briefly review this construction which is based on the grid presentation for links.

Another generalization $HFS$ of link Floer homology to singular links is given in \cite{OSSing}; but actually, the two approaches differ in motivation and in construction.
Moreover, they satisfy different exact triangles.

\begin{figure}[b]
  \begin{gather*}
    \hspace{-.6cm}
    \begin{array}{ccc}
      \xymatrix@C=2cm{\dessin{2.1cm}{CPerm1} \ar@{<->}[r]^{\dessin{.9cm}{CPermAct}} & \dessin{2.1cm}{CPerm2}}
      &  &
      \xymatrix@C=2cm{\dessin{2.1cm}{Comm1} \ar@{<->}[r]^{\dessin{.9cm}{CommAct}} & \dessin{2.1cm}{Comm2}}\\[1cm]
      \textrm{Cyclic permutation} && \textrm{Commutation}
    \end{array}\\[.6cm]
    \begin{array}{c}
      \xymatrix@C=2cm{\dessin{2.1cm}{Stab1} \ar@{<->}[r]^{\dessin{.9cm}{StabAct}} & \dessin{2.1cm}{Stab2}}\\[1cm]
      \textrm{Stabilization/Destabilization}
    \end{array}
  \end{gather*}
  \caption*{Elementary grid diagram moves}
  \label{fig:Regular_Elementary_Moves}
\end{figure}

\saut

A \emph{grid diagram} $G$ of size $n\in \N^*$ is a $(n\times n)$--grid  with some squares decorated by a $O$ or by an $X$ in such a way that each column and each row contains exactly one $O$ and one $X$.
We denote by $\O$ the set of $O$'s and by $\X$ the set of $X$'s.
A \emph{decoration} is an element of $\O\cup \X$.
From now on, we will write RoC as an abbreviation for ``row or column''.

An oriented link diagram is associated to any grid diagram.
To this end, we join, in each column, the two decorations by a straight line.
We do the same in each row with straight lines which underpass all the vertical ones.
Each decoration is then replaced by a right angled corner which can be smoothed.
By convention, the link is oriented by running the horizontal strands from the $\O$--decoration to the $\X$--one.

Reading Figure \ref{fig:G->K} from right to left shows that, up to isotopy, any grid diagram can be described in this way.
P. Cromwell \cite{Cromwell} and, later, I. Dynnikov \cite{Dynnikov} have proven that any two grid diagrams which describe the same link can be connected by a finite sequence of the following elementary grid moves:
\begin{description}
\item[Cyclic permutation] cyclic permutation of the RoCs;
\item[Commutation] commutation of two adjacent columns (resp. rows) under the condition that all the decorations of one of the two commuting columns (resp. rows) are strictly above (resp. strictly on the right of) the decorations of the other one;
\item[Stabilization/Destabilization] addition (resp. removal) of one column and one row by replacing (resp. substituting) locally a decorated square by (resp. to) a $(2\times 2)$--grid containing three decorations in such a way that it globally remains a grid diagram.
\end{description}

\begin{figure}[t]
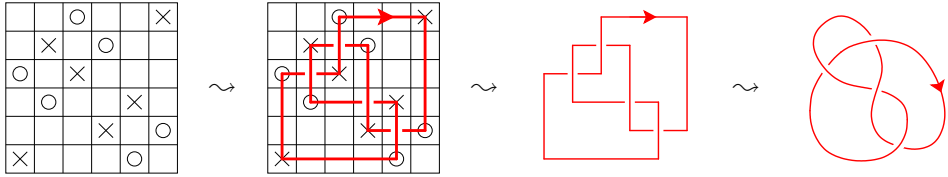

  $$
  \dessin{2.3cm}{Grid} \hspace{.3cm} \leadsto \hspace{.3cm}\dessin{2.3cm}{GridKnot} \hspace{.3cm} \leadsto \hspace{.3cm} \dessin{2.3cm}{SqKnot} \hspace{.3cm} \leadsto \hspace{.3cm} \dessin{2.3cm}{Knot}
  $$
  \caption{From grid diagrams to links}
  \label{fig:G->K}
\end{figure}

\saut

Let $G$ be a grid diagram of size $n$ for a link $L$ with $\ell$ components.
We define $C^-(G)$ as the $\Z[\{U_O\}_{O\in\O}]$--module generated by all one-to-one correspondences between the rows and the columns of $G$.
Every generator can be depicted on the grid by drawing a dot at the bottom left corner of each common square of associated row and column.
Then, generators are sets of $n$ dots arranged on the intersections of the grid lines such that every line contains exactly one point, except the rightmost and the uppermost ones which do not contain any.

Before turning $C^-(G)$ into a bigraded module, we need to introduce some definitions.
For $A$ and $B$ two finite subsets of $\R^2$, we define $\J(A,B)$ as half the number of pairs $((a_1,a_2),(b_1,b_2))\in A\times B$ satisfying $(b_1-a_1)(b_2-a_2)>0$ \ie
$$
\J(A,B):=\#\{(a,b)\in A\times B|a \textrm{ lies in the open south-west or north-east quadrants of }b\}.
$$
Then, we set $M_B(A):=\J(A,A)-2.\J(A,B)+\J(B,B)+1$.
Now, for every generator $x$ of $C^-(G)$ and every $\alpha\in \Z[\{U_O\}_{O\in\O}]$ we can set
\begin{itemize}
\item[-] $M(\alpha.x):=M_\O(x) - 2.\deg(\alpha)$;
\item[-] $A(\alpha.x):=\frac{1}{2}(M_\O(x)-M_\X(x)) - \frac{n-\ell}{2} - \deg(\alpha)$;
\end{itemize}
where $\deg(\alpha)$ is the total polynomial degree of $\alpha$ and decorations are assimilated to their centers of gravity.
The maps $M(\ .\ )$ and $A(\ .\ )$ are respectively called the \emph{Maslov} and the \emph{Alexander grading}.

A differential $\p_G^-$ which decreases $M$ by one and respects the filtration induced by $A$ can then be defined by counting rectangles.
To formalize this, we consider $\T_G$ the torus obtained by gluing together the opposite sides of $G$.
For two generators $x$ and $y$ of $C^-(G)$, a \emph{rectangle $\rho$ connecting $x$ to $y$} is an embedded rectangle in $\T_G$ which satisfies:
\begin{itemize}
\item[-] edges of $\rho$ are embedded in the grid lines;
\item[-] opposite corners of $\rho$ are respectively in $x\setminus y$ and $y\setminus x$;
\item[-] except on $\partial \rho$, the sets $x$ and $y$ coincide;
\item[-] according to the orientation of $\rho$ inherited from the one of $\T_G$, horizontal components of $\p\rho$ are oriented from points of $x$ to points of $y$.
\end{itemize}
A rectangle $\rho$ is \emph{empty} if $\Int(\rho)\cap x=\emptyset$.
We denote by $\Rect(G)$ the set of all empty rectangles on $G$ and by $\Rect(x,y)$ the set of those which connect $x$ to $y$.

\begin{figure}[ht]
  $$
  \begin{array}{ccccc}
    \dessin{3cm}{Rect1} & & \dessin{3cm}{Rect2} & & \dessin{3cm}{Rect3}\\[1cm]
    \textrm{a non empty rectangle} && \multicolumn{3}{c}{\textrm{empty rectangles}}
  \end{array}
  $$
  \caption*{Examples of rectangles connecting $x$ to $y$: {\footnotesize dark dots describe the generator $x$ while hollow ones describe $y$. Rectangles are depicted by shading. Since a rectangle is embedded in the torus and not only in the rectangular grid, it may be ripped in several pieces as in the case on the right.}}
  \label{fig:Rect}
\end{figure}

Finally, we define the map $\func{\p_G^-}{C^-(G)}{C^-(G)}$ as the morphism of $\Z[\{U_O\}_{o\in\O}]$--modules defined on the generators of $C^-(G)$ by
$$
\p_G^-(x)=\sum_{y \textrm{ generator}\textcolor{white}{R}} \sum_{\rho\in \Rect(x,y)} \e(\rho)\Big(\prod_{O\in\O\cap\rho}U_O\Big).y,
$$
where $\func{\e}{\Rect(G)}{\{\pm 1\}}$ is a sign assigment which is defined in \cite{MOST} or, equivalently, in \cite{Gallais}.

\saut

The homology $H^-(G)$ is defined as $H_*\big(C^-(G),\p_G^-\big)$.
It is filtered by the Alexander grading.
If $\S\subset\O$ is a choice of one $\O$--decoration on each connected component of $L$, then the link Floer homology $\widehat{HF}(G)$ is defined as $H_*\left(\fract{\big(C^-(G),\p^-_G\big)}/{\{U_O\}_{O\in\S}}\right)$.
It is proven in \cite{MOST} that $\widehat{HF}(G)$ depends only on $L$ and that it categorifies the Alexander polynomial.

\saut

The paper is organized as follows.
In the first section, we generalize the grid presentation to singular links.
This is done by defining singular grids with singular RoCs, which contain four decorations instead of two.
Every singular RoC corresponds to a singular double point of the underlying link.
There are four ways to split a singular RoC into regular ones.
At the level of links, they correspond to the three orientation-preserving desingularizations of the associated double point.

Then we define a fourth elementary grid move, called \emph{rotation} (see Figure \ref{fig:Rotation_Moves}), and prove the following statement:
\begin{prop*}
  Every singular link admits a singular grid presentation.
  Moreover, any two grid diagrams for a singular link can be connected by a finite sequence of regular elementary grid moves and rotations.
\end{prop*}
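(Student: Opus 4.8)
The plan is to prove the two assertions separately, the second one by reducing it to the corresponding theorem of Cromwell \cite{Cromwell} and Dynnikov \cite{Dynnikov} for regular grid diagrams. \textbf{Existence of a presentation.} Away from its double points, a singular link $L$ is an ordinary $1$--submanifold, so the classical argument producing an arc (grid) presentation of a link applies verbatim: after a small isotopy we may assume that outside pairwise disjoint balls $D_1,\dots,D_k$ around the double points $p_1,\dots,p_k$ the link is a union of horizontal and vertical segments, each vertical segment running over each horizontal one. Since every $p_i$ is a \emph{rigid} transverse double point, a further isotopy supported near $D_i$ brings $L\cap D_i$ to one fixed standard local model whose four free ends match the rectilinear picture outside; by the way singular RoCs are defined in Section 1, this standard model is exactly the link picture carried by a singular row (or column) and its four decorations, and the chosen orientation of the plane spanned by the two tangent vectors at $p_i$ is what selects the combinatorial type of that singular RoC. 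Rescaling so that all decorations have integer coordinates yields a singular grid diagram whose associated singular link is $L$.

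\textbf{From one presentation to another.} Let $G$ and $G'$ be singular grid diagrams for the same singular link $L$, with double points $p_1,\dots,p_k$. Resolve every $p_i$ in the \emph{same} way on both sides --- say to a positive crossing --- by choosing one of the four splittings of the corresponding singular RoC; this turns $G$ and $G'$ into \emph{regular} grid diagrams $\overline G$ and $\overline{G'}$, and since $L$ is fixed they both present the single classical link $\overline L$ obtained from $L$ by the positive desingularization of all its double points. We keep track, inside $\overline G$ and $\overline{G'}$, of the $k$ pairs of regular RoCs (call them $\kappa$--blocks) that arise from desingularized double points. By the Cromwell--Dynnikov theorem there is a finite sequence of cyclic permutations, commutations and (de)stabilizations from $\overline G$ to $\overline{G'}$, and the heart of the proof is to \emph{lift} it to a sequence of moves between \emph{singular} grids. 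Processing the regular moves one at a time, I would show that each can be imitated, on the singular side, by regular elementary moves and \emph{rotations}, the rotation move being precisely the operation that transports a decoration or a whole RoC past a singular RoC --- something no regular move can do once the double point has been re-glued. Concretely the verification is a finite case analysis: (a) a regular move disjoint from every $\kappa$--block lifts verbatim; (b) a regular move internal to a $\kappa$--block is either one of the other three splittings of that singular RoC, hence absorbed by a single rotation, or else is forbidden on the singular side and must be shown to be avoidable; (c) a regular move straddling a $\kappa$--block is decomposed, after sliding the block out of the way with rotations (possibly enlarging the grid first to make room), into moves of types (a) and (b). Concatenating the lifted pieces produces the desired sequence of regular elementary moves and rotations from $G$ to $G'$.

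\textbf{Where the difficulty lies.} The routine parts --- existence, case (a), the bookkeeping with $\kappa$--blocks --- are largely mechanical; the real content is cases (b) and (c), which amount to proving a \emph{relative} version of the Cromwell--Dynnikov theorem for grid diagrams carrying marked crossings, with rotation as the one extra generator. One must in particular check that a stabilization or destabilization performed inside or across a $\kappa$--block can always be traded for a rotation together with moves performed away from it, and that the outcome is independent of the auxiliary choices (which desingularization is used, how the $\kappa$--blocks are normalised). Establishing that this finite list of replacements is complete --- hence that rotation is the only new move required --- is the main obstacle.
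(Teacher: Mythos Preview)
Your existence argument is fine in spirit, though the paper's version is more concrete: it desingularizes the link, takes an ordinary grid diagram, locates each former double point as a cross-shaped configuration of four decorations, and then uses stabilizations and commutations to bring the two columns of that cross together so they can be merged into one singular column.

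Your approach to the second assertion, however, has a genuine gap. You propose to desingularize both $G$ and $G'$ identically, obtain regular grids $\overline G$ and $\overline{G'}$, invoke Cromwell--Dynnikov, and then lift the resulting sequence by tracking the ``$\kappa$--blocks''. The difficulty is that a Cromwell--Dynnikov sequence is oblivious to your marking: nothing prevents it from separating the two RoCs of a $\kappa$--block by commutations, inserting stabilizations between them, or rearranging them so that at the end of the sequence the former $\kappa$--blocks of $\overline G$ bear no relation to the $\kappa$--blocks of $\overline{G'}$. Your case analysis presupposes that at every step the $\kappa$--block is still a recognizable pair of adjacent RoCs through which you can push moves using rotations; once that adjacency is lost there is no singular RoC on which a rotation could act. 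What you would really need is a \emph{relative} Cromwell--Dynnikov theorem for grids with marked crossings, and that is a substantial statement you have not supplied (and you acknowledge as much in your last paragraph). A smaller but telling symptom: you write that a move internal to a $\kappa$--block ``is one of the other three splittings of that singular RoC, hence absorbed by a single rotation'', but rotation exchanges a singular row for a singular column --- it does not toggle between the different splittings of the same singular RoC.

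The paper sidesteps all of this by working at the level of \emph{diagrams} rather than arbitrary grid sequences. It uses Kauffman's Reidemeister-type calculus for singular links: the three classical moves together with the two additional moves IV and V of Figure~\ref{fig:Singular_Reidemeister_Moves}. After normalising both grids (turning singular rows into singular columns via rotations, and putting each singular column into a standard local form), one shows that a planar isotopy rigidly preserving neighbourhoods of double points can be realised by regular elementary moves and rotations, and then one realises each of the finitely many singular Reidemeister moves IV and V by an explicit short sequence of grid moves (Figures~\ref{fig:Reidemeister_V} and~\ref{fig:Reidemeister_IV}). This reduces the problem to a finite list of local verifications rather than to controlling an arbitrary global sequence.
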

\noi To prove this proposition, we use the diagrammatical approach of singular links given by L. Kauffman in \cite{Kauffman}.
Besides the three usual Reidemeister moves, this description adds two new ones which are illustrated in Figure \ref{fig:Singular_Reidemeister_Moves}.

In section \ref{sec:Homology}, we define a chain complex $\big(CV^-(G),\p_G^-\big)$ for a singular grid $G$ which generalizes the regular case.
Essentially, $CV^-(G)$ is, as a module, the grading-shifted direct sum of the chain complexes associated to all desingularizations of $G$.
Shifts increasing the Maslov and Alexander gradings by $l\in\Z$ are respectively denoted by $.[l]$ and $.\{l\}$.
The differential is defined by counting more general polygons on $G$.
For technical reason, the construction requires a choice of orientation for every singular RoC.
This choice can be reduces to a choice, on the underlying singular link $L$, of an orientation for all its double points \ie a choice of orientation for every plane spanned by the two transverse vectors which are tangent to $L$ at a double point.
\begin{figure}
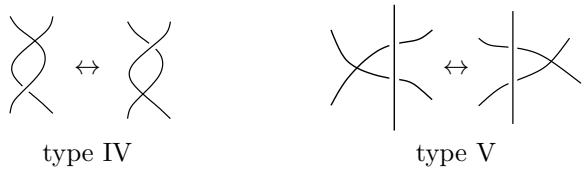

  $$
  \begin{array}{ccc}
    \dessinH{1cm}{MoveIV1} \leftrightarrow \dessinH{1cm}{MoveIV2} & \hspace{1.2cm} & \dessinH{1.4cm}{MoveV1} \leftrightarrow \dessinH{1.4cm}{MoveV2}  \\[.5cm]
    \textrm{type IV} && \textrm{type V}
  \end{array}
  $$ 
  \caption{Singular Reidemeister moves}
  \label{fig:Singular_Reidemeister_Moves}
\end{figure}
Then we prove:
\begin{prop*}
  The homology $H_*\big(CV^-(G),\p_G^-\big)$ depends only on the underlying singular link $L$ with oriented double points.
\end{prop*}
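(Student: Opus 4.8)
The plan is to mimic the invariance proof for ordinary grid homology of \cite{MOST}, enlarged so as to keep track of the direct-sum decomposition over desingularizations. By the previous proposition it suffices to show that $H_*\big(CV^-(G),\p_G^-\big)$ is unchanged --- up to the grading shift prescribed by the combinatorial data, and keeping the orientations of the singular RoCs fixed --- under each of the four elementary moves: cyclic permutation, commutation, stabilization/destabilization, and rotation. Cyclic permutation is immediate, since the complex is built from the torus $\T_G$, which does not remember where the fundamental domain was cut; it therefore induces a tautological isomorphism of chain complexes respecting both gradings.

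For commutation I would interchange two adjacent RoCs and define a map $CV^-(G)\to CV^-(G')$ by counting the empty pentagons supported in the thin region between the two commuting lines --- together with the more general polygons already used to define $\p_G^-$, in case the moving strand sweeps across a singular RoC. One checks that it is a chain map by the usual combinatorial degeneration argument, pairing up the two-step domains (a rectangle followed by a pentagon, or a pentagon followed by a rectangle), and that it is a quasi-isomorphism either by producing an explicit homotopy inverse counting pentagons on the other side, with null-homotopy supplied by hexagons, or by filtering $CV^-(G)$ by the number of dots inside a distinguished square. Since $CV^-(G)$ is, as a module, the grading-shifted sum of the complexes of the desingularizations, every such count decomposes over desingularizations, and on each summand one recovers precisely the ordinary commutation argument.

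Stabilization/destabilization is handled algebraically. After a stabilization, the generators of $CV^-(G')$ split according to whether the new distinguished dot occupies the new corner, and this exhibits $\big(CV^-(G'),\p_{G'}^-\big)$ as the mapping cone of a map between a subcomplex and a quotient complex, one of which reduces --- after the standard change of basis and identification of $U$--variables --- to $\big(CV^-(G),\p_G^-\big)$. A cancellation (Gaussian elimination) lemma then identifies the homology of the cone with $H_*\big(CV^-(G),\p_G^-\big)$, once the bookkeeping of the $U$--variables and of the Maslov/Alexander shifts is checked. The one genuinely new point relative to \cite{MOST} is that the singular polygons passing through the stabilized corner must be shown to contribute only terms that are absorbed by the cancellation; as before this follows by decomposing over desingularizations and invoking the ordinary statement on each piece.

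The new move is the rotation. Here I would use the description of $\big(CV^-(G),\p_G^-\big)$ as an iterated mapping cone (a one-sided twisted complex) built over the four regular desingularizations of a chosen singular RoC, the internal arrows between the summands counting the degenerate $L$--shaped domains created by resolving that RoC. A rotation cyclically permutes the four decorations of the singular RoC, hence cyclically permutes its four desingularizations together with the internal arrows of the twisted complex; a direct comparison --- with signs controlled by a sign assignment $\e$ on the enlarged polygon set --- shows that this permutation, combined with the prescribed shifts $.[l]$ and $.\{l\}$, is an isomorphism of chain complexes, hence on homology. (Equivalently, on each fixed desingularization the rotation is realized by a short sequence of ordinary grid moves, and these assemble compatibly.) Throughout, one must first fix a coherent sign assignment on the enlarged set of polygons, extending the one of \cite{MOST} or \cite{Gallais}, and prove it exists and is unique up to the relevant equivalence; establishing this, together with the double-complex bookkeeping in the stabilization step, is where I expect the main difficulty to lie.
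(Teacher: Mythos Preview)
Your outline for cyclic permutation, commutation, and stabilization is broadly in line with the paper: in each case one filters by the desing grading so that the associated graded pieces are the ordinary MOST complexes, and the regular-grid argument carries over summand by summand. Two small omissions: the differential $\p^-_{G,P,C}$ also depends on the choice of winding arcs (through the set of peaks $P$) and on the compass convention $C$, and the paper proves separately that the homology is insensitive to these choices --- compass-convention independence comes from the mapping-cone description (changing $C(p)$ replaces $f_p$ by $-f_p$), and isotopy of winding arcs is handled by an explicit homotopy between $f_p$ and $f_{p'}$.

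The real gap is rotation. A rotation does \emph{not} cyclically permute four decorations within a single singular RoC, and a singular RoC has only \emph{two} admissible desingularizations, not four. The rotation move replaces a $(3\times 4)$--subgrid carrying a singular \emph{column} by a $(4\times 3)$--subgrid carrying a singular \emph{row} (or conversely); the grids $G_h$ and $G_v$ have different shapes, and there is no tautological bijection of generators to exploit. Your parenthetical alternative --- that on each fixed desingularization the rotation is a short sequence of ordinary moves --- is closer to the truth and is in fact how the paper begins: the $0$--resolutions of $G_h$ and $G_v$ differ by two regular commutations, giving quasi-isomorphisms $\phi_{\beta_2\gamma_2}$, $\phi_{\gamma_1\beta_1}$ between them. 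But the hard part is showing that the wall-crossing maps $f_{p_1}$ (for the singular column) and $f_{p_2}$ (for the singular row) are intertwined by these commutation maps; they are not on the nose, only up to an explicit homotopy $\phi$ that the paper builds from auxiliary maps $\psi$, $\varphi_1$, $\varphi_2$ counting new ``$\beta\gamma\beta$--polygons'' and bi-peaked polygons on a common total grid. Proving that the resulting $\Phi:\Cone(f_{p_1})\to\Cone(f_{p_2})$ is a chain map requires a delicate case-by-case correspondence between polygon decompositions (the paper's Figure~\ref{fig:Correspondence}), and this is where the genuine work lies. Your proposal does not yet contain the idea of this homotopy or of the enlarged polygon counts needed to produce it.
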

\noi In section \ref{sec:Properties}, we discuss graded objects associated to the different filtrations and give the definition of singular link Floer homology $\widehat{HFV}$.
In this section we also prove a few symmetry properties which reflect some properties of Alexander polynomial.
Finally, we prove that $\widehat{HFV}$ is null for every link with a singular loop.

The paper is ended by computations of singular link homologies with $\fract\Z/{2\Z}$ coefficients, made with the help of a computer.
They lead to conjectures about conditions under which the singular link Floer homology should be null.

\saut

The present paper synthesizes some results of the author Ph.D thesis \cite{These}.
However, the reader should be warned that, for clarity reasons, notation and conventions may have been slightly modified.
Lastly, the author would like to sincerely thank the referee for all his or her comments, but also Peter Ozsv\'ath, Christian Blanchet, Thomas Fiedler and Etienne Gallais for interesting conversations and remarks.

%%% Local Variables: 
%%% mode: latex
%%% TeX-master: "Singular"
%%% End:
% LocalWords:  RoC RoCs

\saut
% Singular Grids
\section{Grid description for singular links}
\label{sec:Grids}

% Grid diagrams
\subsection{Singular grid diagrams}
\label{ssec:Grids_Diagrams}

A \emph{singular grid diagram} is a rectangular grid with some squares decorated by a $O$ or by an $X$ in such a way that each RoC contains exactly one or exactly two decoration(s) of each kind.
A RoC is called \emph{singular} if it contains four decorations and \emph{regular} otherwise.
Furthermore, in the former case, the two middle decorations are required to be surrounded by decorations of different kinds.
In other words, as we read the decorations from bottom to top (resp. from left to right) in a singular column (resp. singular row), we find one of the following: $OOXX$, $OXXO$, $XXOO$ or $XOOX$. 
We denote by, respectively, $\O$ and $\X$ the sets of $O$'s and $X$'s decorations of $G$.
The size of a grid is the cardinality of $\O$.
A singular grid diagram may have different numbers of lines and rows.

\begin{figure}[b]
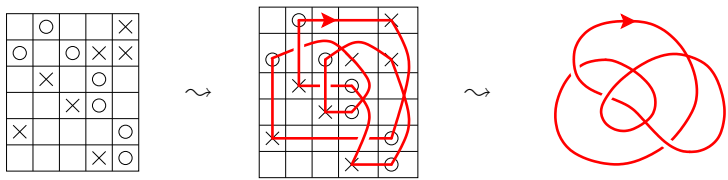

  $$
  \dessin{2.3cm}{SSGrid} \hspace{.5cm} \leadsto \hspace{.5cm}\dessin{2.3cm}{SSGridKnot} \hspace{.5cm} \leadsto \hspace{.5cm}\dessin{2.3cm}{SSKnot}
  $$
  \caption*{From singular grid diagrams to singular links}
  \label{fig:SG->SK}
\end{figure}
\saut

As in the regular case, every singular grid diagram gives rise to an oriented singular link.
The process is almost identical.
First we join the decorations in regular columns.
For singular ones, we connect the uppermost decoration to the third one and the second to the lowermost by vertical lines slightly bended towards the right (or, equivalently, towards the left) in such a way that the two curves intersect in one singular double point.
Then we join again the decorations but inside the rows, taking care in underpassing vertical strands when necessary.
Pairs of singular horizontal strands are simultaneously bended upward or downward.
As a matter of fact, every singular RoC gives rise to a singular double point.

\begin{prop}
  \label{prop:SLink->SGrid}
  Every singular link can be described by a singular grid diagram.
\end{prop}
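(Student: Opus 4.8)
The plan is to reduce the statement to the known result that every (ordinary) link is presented by a grid diagram, by treating the double points one at a time. First I would fix a generic diagram $D$ of the singular link $L$: project $L$ to the plane so that the only singularities are a finite number of transverse double points (coming from the rigid double points of $L$) together with finitely many ordinary crossings, no two special points sharing the same $x$- or $y$-coordinate. After a small isotopy I can further assume $D$ is a \emph{rectilinear} (or ``arc-'') diagram away from small neighbourhoods of the double points: every strand runs along horizontal and vertical segments, horizontal always underpassing vertical, exactly as in the classical passage from a link diagram to a grid. This is the standard normalisation used by Cromwell and Dynnikov, and near an ordinary crossing it produces the usual grid picture.

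Next I would deal with a neighbourhood of each transverse double point. Near such a point the diagram locally looks like two strands crossing transversally with \emph{no} over/under information; I isotope this local picture so that one strand is a short horizontal segment and the other a short vertical segment meeting at the double point, which I then place at the centre of a small $(1\times 1)$ square. Using the recipe described just above Proposition~\ref{prop:SLink->SGrid} in reverse — bending a pair of vertical (resp. horizontal) strands slightly to the right so that they cross in one point — such a configuration is exactly what a singular RoC produces: four decorations $O,X$ in one of the allowed cyclic patterns $OOXX$, $OXXO$, $XXOO$, $XOOX$, with the two middle decorations flanked by decorations of opposite type. So each double point contributes one singular row and one singular column, and the orientation convention (running horizontal strands from $O$ to $X$) is respected because the local model can be oriented consistently with the chosen orientation of $L$.

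Then I would globalise: having put every ordinary crossing and every double point into a standard local square, I refine the rectangular mesh so that each of these local squares, and each corner where the diagram turns, occupies its own row and column, and so that every row and every column still meets the diagram in the required pattern — one $O$ and one $X$ for a regular RoC, the $OOXX$-type quadruple for a singular one. Placing the $O$'s and $X$'s at the ends of the maximal horizontal and vertical segments of $D$ exactly as in the classical construction yields a singular grid diagram $G$, and running the reconstruction map of the previous paragraph ``from singular grid to singular link'' recovers $D$, hence $L$, up to isotopy (including the rigidity data at the double points, which is encoded by the local two-strand crossing model).

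The main obstacle I expect is bookkeeping rather than conceptual: one must verify that the mesh can always be chosen so that no row or column accidentally acquires the wrong number of decorations — in particular that two distinct double points, or a double point and an ordinary crossing, can always be separated into different RoCs, and that the ``middle two decorations surrounded by different kinds'' condition at a singular RoC can always be arranged by choosing the direction (left vs.\ right, up vs.\ down) in which the two parallel strands are bent. Both are handled by the same genericity and local-modification arguments used in the regular case, so the proof is essentially the classical grid construction with the single new local move of replacing a transverse double point by a singular RoC. (The second half of the Introduction's Proposition, that any two singular grids for $L$ are connected by regular grid moves plus rotations, is a separate statement proved later; here only existence is claimed.)
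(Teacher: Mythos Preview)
Your overall strategy --- reduce to the regular grid construction and handle double points by a local modification --- is in the right spirit, but there is a genuine confusion in your local model. You write that near a double point you isotope so that ``one strand is a short horizontal segment and the other a short vertical segment'', placing the crossing in a $(1\times 1)$ square. That is the model for an \emph{ordinary} crossing in a grid diagram, not for a double point. In a singular grid the double point arises differently: a singular column carries four decorations in a \emph{single} column, and the two strands through the double point are both essentially vertical (the topmost decoration joined to the third, the second to the bottommost), bent slightly sideways so as to cross once. The local picture therefore occupies one column and \emph{four} rows, not a $(1\times 1)$ cell, and both strands run in the same direction. Your next sentence about ``bending a pair of vertical strands'' is the correct picture, but it contradicts what you wrote just before; and the claim that ``each double point contributes one singular row and one singular column'' is false --- it contributes one \emph{or} the other.

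The paper's proof sidesteps this entirely, and is worth comparing. It first desingularizes $L$ at every double point (choosing either resolution), applies the known regular result to obtain an ordinary grid, and observes that each former double point now appears as an ordinary crossing, \ie four decorations in a cross pattern. A couple of stabilisations and commutations bring the two relevant columns adjacent with their decorations stacked appropriately; merging these two adjacent regular columns into one produces the desired singular column. This detour through the regular case handles your ``bookkeeping obstacle'' for free --- the regular grid is already well-formed --- and requires only one honest local check. Your direct approach can be made to work once the local model is corrected, but you would then really have to carry out the mesh-refinement argument you flag as the main obstacle; the paper's route is shorter and avoids it.
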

\begin{proof}
  Consider a planar diagram for a given singular link and choose a way to desingularize all the double points. 
  Now, consider a grid diagram which corresponds to this regular diagram.
  Singular points appear as regular crossings, \ie as four decorations arranged within a cross pattern.
  
  \saut
  
  {\parpic[r]{$\dessin{2cm}{Cross}\leadsto\dessin{2cm}{ZCross}\leadsto\dessin{2cm}{SCross}$}
    {If the vertical strand belongs to a regular column, then, by performing two stabilizations and a few commutations, one can obtain a configuration which enables the recovering of the double point by merging two adjacent columns.}
    
  }
  
  {\parpic[l]{$\dessin{2.15cm}{CrossSing}\leadsto\dessin{2.15cm}{SCrossSing}$}
    {\vspace{.5cm}If the vertical strand is already part of a singular column, then we can move the crossing in order to be back to the precedent case.}
    
  }
  
\end{proof}

Obviously, circular permutations and commutations of RoCs in a singular grid leave the associated singular link invariant.
As shown in Figure \ref{fig:Forbidden_Stabilization}, (de)stabilizations require a little more attention. 
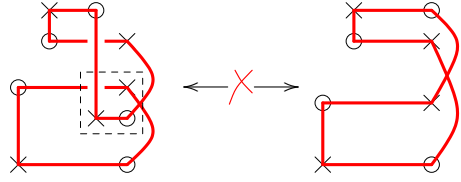
\begin{figure}
  $$
  \xymatrix@C=1.5cm{\dessin{2.5cm}{NotStab1}\ar@{<->}[r]|{\dessin{.5cm}{wrong}}&\dessin{2.5cm}{NotStab2}}
  $$ 
  \caption{Forbidden stabilization}
  \label{fig:Forbidden_Stabilization}
\end{figure}
In order to avoid forbidden phenomena, we require that the intersection of the $(2\times 2)$--square involved in a (de)stabilization with any given singular RoC contains at most one decoration.
In other words, the row and the column determined by adjacent decorations in the involved $(2\times 2)$--square are required to be regular.

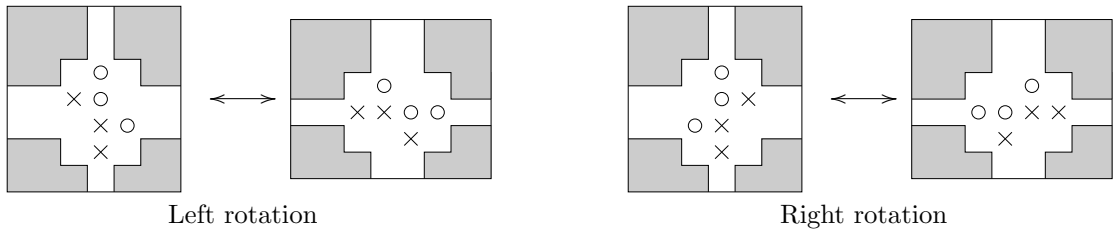
\begin{figure}[b]
  $$
  \begin{array}{ccc}
    \xymatrix{\dessin{2.5cm}{RotG1} \ar@{<->}[r] & \dessin{2.5cm}{RotG2}} & \hspace{.5cm} &  \xymatrix{\dessin{2.5cm}{RotD1} \ar@{<->}[r] & \dessin{2.5cm}{RotD2}}\\
    \textrm{Left rotation} && \textrm{Right rotation}
  \end{array}
  $$
  \caption{Rotation moves}
  \label{fig:Rotation_Moves}
\end{figure}
\saut

Since a double point can be represented by mean of a singular column or by mean of a singular row, we introduce new grid moves, called \emph{rotations}, which relate these two possibilities.
Up to the addition and the removal of some empty pieces of rows or columns, rotations replace a specific $(3\times 4)$-subgrid by a $(4\times 3)$ one.
A picture of these specific subgrids is given in Figure \ref{fig:Rotation_Moves}.

\begin{theo}
  \label{theo:Elementary_Moves}
  Any two singular grid diagrams which describe the same singular link can be connected by a finite sequence of
  \begin{itemize}
  \item[-] regular cyclic permutations of RoCs, \ie cyclic permutations moving a regular RoC from one side to its opposite;
  \item[-] commutations of regular RoCs;
  \item[-] (de)stabilizations which involve at most one decoration in each singular RoC;
  \item[-] rotations.
  \end{itemize}
\end{theo}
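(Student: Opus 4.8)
The plan is to reduce everything to the already-established Cromwell–Dynnikov theorem in the regular case by encoding a singular grid as a regular grid together with some bookkeeping data. First I would associate to a singular grid diagram $G$ a \emph{resolved} regular grid diagram $\widetilde G$ obtained by splitting each singular RoC into two regular ones (say, using the splitting corresponding to a fixed desingularization rule applied uniformly), keeping track of which pairs of regular RoCs came from a singular one. The underlying link of $\widetilde G$ is a fixed desingularization $\widetilde L$ of the singular link $L$, and conversely any regular grid for $\widetilde L$ lifts, after suitable local adjustment, to a singular grid for $L$ by the merging procedure used in the proof of Proposition~\ref{prop:SLink->SGrid}. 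The strategy is then: given two singular grids $G_0,G_1$ for $L$, produce a sequence of moves by comparing their resolutions $\widetilde G_0,\widetilde G_1$, and show that each regular Cromwell–Dynnikov move between the resolutions either is already a move of the type allowed in the theorem, or can be ``pushed past'' the singular RoCs at the cost of inserting rotations and regular moves of the allowed kind.

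The key steps, in order, would be: (1) Fix the normalization. Use the singular Reidemeister moves of Kauffman (Figure~\ref{fig:Singular_Reidemeister_Moves}) together with the planar-isotopy analysis to show that any two singular grids for $L$ have resolutions $\widetilde G_0,\widetilde G_1$ presenting the \emph{same} regular link $\widetilde L$; the subtle point is that two different local desingularization choices at a given double point differ by a finite sequence of (regular) moves on the resolved grids, which translates back into rotations and allowed moves on the singular grids — this is exactly the ``four ways to split a singular RoC'' remark from the introduction, and each change of splitting is realized by one rotation plus commutations. (2) Invoke Cromwell–Dynnikov to connect $\widetilde G_0$ to $\widetilde G_1$ by regular cyclic permutations, commutations and (de)stabilizations. (3) The core of the argument: lift this sequence move-by-move. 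A regular move performed entirely ``away from'' the marked pairs of RoCs lifts verbatim to an allowed move on the singular grid. A regular move that involves one of the marked RoCs must be handled by first using a rotation to turn the offending singular column into a singular row (or vice versa), thereby freeing up the direction in which the regular move acts, performing the (now allowed) move, and rotating back. (4) Finally, check that the (de)stabilization constraint (``at most one decoration in each singular RoC'') is automatically met after these rotations, since a rotation can always be used to present the relevant double point transverse to the RoC being stabilized.

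The main obstacle I expect is step~(3): commutation and stabilization moves between the resolved grids may act precisely in the direction ``blocked'' by a singular RoC — e.g. commuting one of the two regular columns that came from a singular column past its twin, which has no meaning on the singular grid. The real content of the theorem is a local normal-form lemma: \emph{any} regular move between $\widetilde G_0$ and $\widetilde G_1$ can be replaced, without changing its effect on the resolution, by a sequence in which no move ever separates, merges, or passes between the two members of a marked pair, except for finitely many controlled rearrangements that correspond on the singular grid to rotations. Proving this lemma is essentially a careful case analysis on the relative position of the move's support and the marked RoCs, using commutations to slide things apart and rotations to switch a double point between its row- and column-presentation; once it is in place, the theorem follows by concatenating the lifted moves. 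I would also need to treat the bookkeeping of orientations of double points, but since rotations and the allowed moves visibly preserve the orientation data, this is routine.
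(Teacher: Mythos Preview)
Your approach is genuinely different from the paper's, and the difference is instructive. The paper does \emph{not} pass to a resolved regular grid and invoke Cromwell--Dynnikov as a black box. Instead it works directly at the level of singular link diagrams: first it normalizes both singular grids (via rotations and regular moves) so that every double point appears as a singular column in a fixed local model, then it reads off singular planar diagrams $D_1,D_2$ from the two grids, and finally it uses Kauffman's theorem that two diagrams of the same singular link are related by planar isotopy plus the usual Reidemeister moves together with the singular moves~IV and~V. The planar-isotopy part is handled by re-running Dynnikov's argument while keeping a rigid neighbourhood of each double point (with one extra local case, the ``rolling up'' of a double point, realized explicitly by stabilizations and commutations), and each singular Reidemeister move is realized by an explicit short sequence of elementary grid moves (Figures~\ref{fig:Reidemeister_V} and~\ref{fig:Reidemeister_IV}). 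The point is that Kauffman's theorem already does the bookkeeping of double points at the diagram level, so one never has to track ``marked pairs of RoCs'' through an arbitrary Cromwell--Dynnikov sequence.

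Your proposal, by contrast, pushes all of the difficulty into the unproven ``local normal-form lemma'' of step~(3), and this is a genuine gap rather than a routine case analysis. Once you resolve $G_0$ and $G_1$ to regular grids $\widetilde G_0,\widetilde G_1$ and connect them by an arbitrary Cromwell--Dynnikov sequence, there is no canonical way to propagate the marking of pairs through the intermediate grids: a commutation can insert a foreign column between the two members of a marked pair, after which the grid no longer corresponds to any singular grid at all, and it is not clear how to ``undo'' this without essentially redoing the whole argument. Showing that every regular move can be replaced by a sequence that keeps marked pairs adjacent (up to rotations) is a statement of the same strength as the theorem you are trying to prove; you have correctly identified it as the crux, but you have not supplied an argument for it. Two smaller points: the discussion of ``two different local desingularization choices'' in step~(1) is confused, since the $0$-- and $1$--resolutions of a double point yield \emph{different} regular links and are not related by regular grid moves (you should simply fix one resolution throughout, which by Proposition~\ref{prop:Correspondance_GridResolution_LinkResolution} is consistent across rows and columns); and orientations of double points play no role in this theorem --- they enter only later, in the construction of the homology.
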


These moves are called \emph{elementary moves}.
The first three ones are called \emph{regular elementary moves}.

\begin{figure}[b]
  $$
  \begin{array}{lc}
    \fbox{$\dessin{1.3cm}{RM51}\sim\dessin{1.3cm}{RM51bis}$}\ \ : \hspace{1cm} &
    \xymatrix{\dessin{2.45cm}{R511} \ar@{<->}[r] & \dessin{2.45cm}{R512}} \\[.4cm]
    \fbox{$\dessin{1.3cm}{RM52}\sim\dessin{1.3cm}{RM52bis}$}\ \ :  \hspace{1cm} &
    \xymatrix{\dessin{2.45cm}{R521} \ar@{<->}[r] & \dessin{2.45cm}{R522}} \\[.4cm]
    \fbox{$\dessin{1.3cm}{RM53}\sim\dessin{1.3cm}{RM53bis}$}\ \ : \hspace{1cm} &
    \xymatrix{\dessin{2.45cm}{R531} \ar@{<->}[r] & \dessin{2.45cm}{R532}} \\[.4cm]
    \fbox{$\dessin{1.3cm}{RM54}\sim\dessin{1.3cm}{RM54bis}$}\ \ :  \hspace{1cm} &
    \xymatrix{\dessin{2.45cm}{R541} \ar@{<->}[r] & \dessin{2.45cm}{R542}}
  \end{array}
  $$
  \caption{Realization of singular Reidemeister moves V: {\footnotesize Arrows stand for sequences of commutations.}}
  \label{fig:Reidemeister_V}
\end{figure}
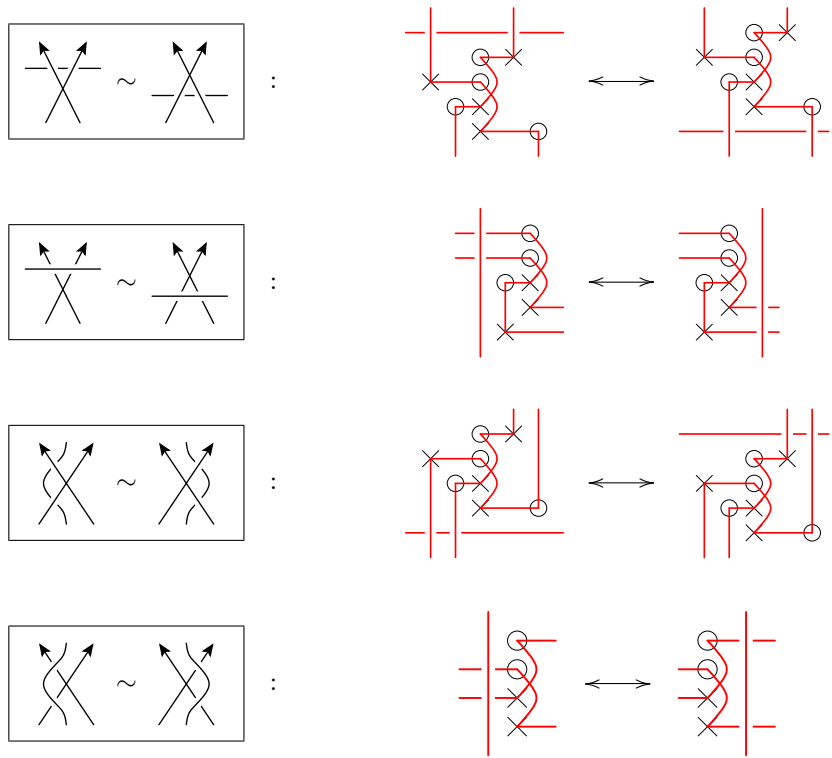

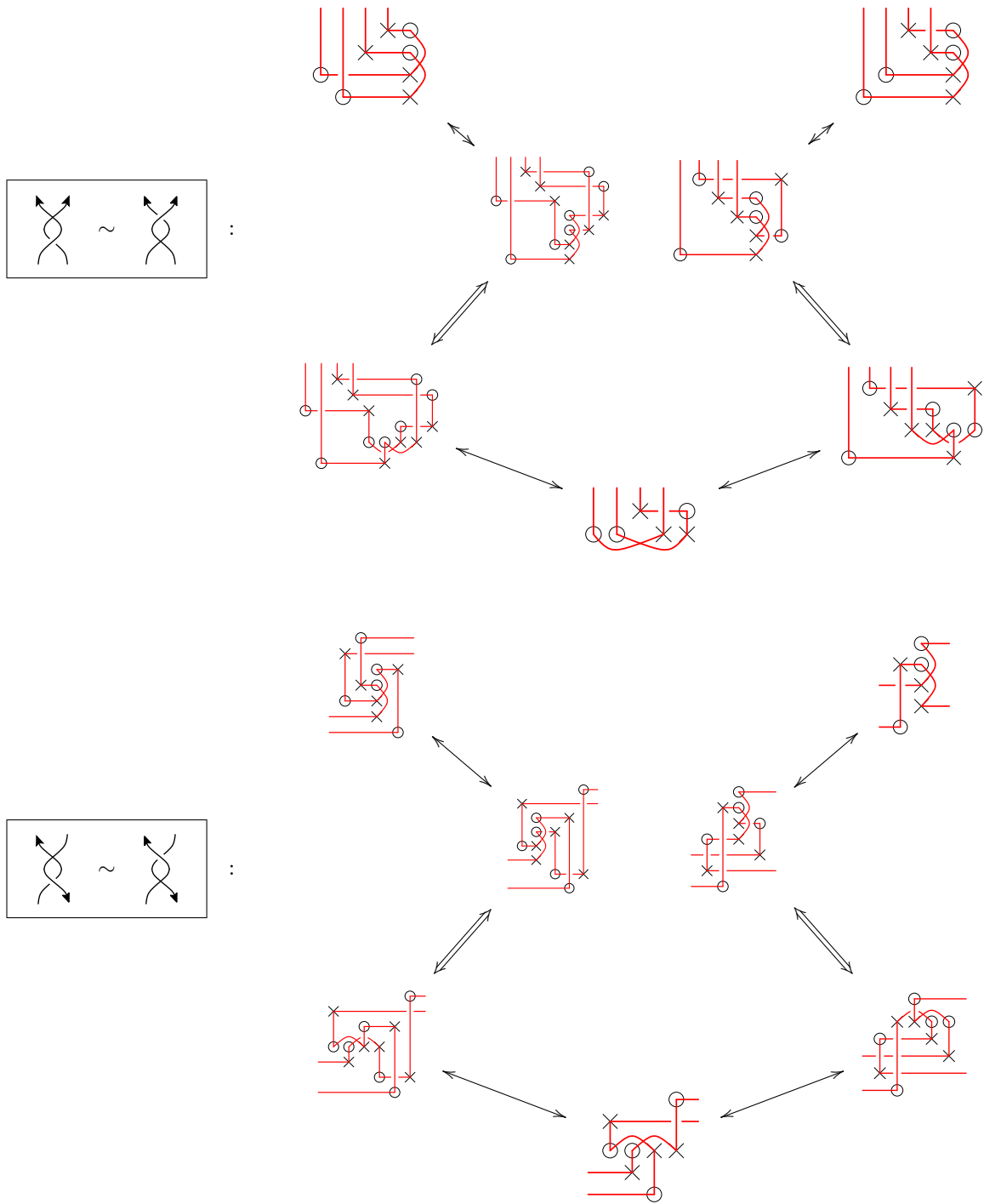
\begin{figure}[p]
  $$
  \begin{array}{lc}
    \begin{array}{c}
      \fbox{$\dessin{1.3cm}{RM41}\sim\dessin{1.3cm}{RM41bis}$}\ \ : \hspace{.2cm}\\[1.7cm]
      \\
    \end{array}
    &
    \vcenter{\hbox{\xymatrix@!0@C=1.4cm@R=.8cm{\dessin{2cm}{R417} \ar@{<->}[dddrr] &&&&&& \dessin{2cm}{R411}\\
          &&&&&&\\
          &&&&&&\\
          && \dessin{2cm}{R416} \ar@{<=>}[ddddll] && \dessin{2cm}{R412} \ar@{<->}[uuurr] &&\\
          &&&&&&\\
          &&&&&&\\
          &&&&&&\\
          \dessin{2cm}{R415} \ar@{<->}[ddrrr] &&&&&& \dessin{2cm}{R413} \ar@{<=>}[uuuull]\\
          &&&&&&\\
          &&& \dessin{1.5cm}{R414} \ar@{<->}[uurrr] &&&}}}\\[5cm]
    
    \begin{array}{c}
      \fbox{$\dessin{1.3cm}{RM43}\sim\dessin{1.3cm}{RM43bis}$}\ \ : \hspace{.2cm}\\[1.7cm]
      \\
    \end{array}
    &
    \vcenter{\hbox{\xymatrix@!0@C=1.4cm@R=.8cm{\dessin{2cm}{R427} \ar@{<->}[dddrr] &&&&&& \dessin{2cm}{R421}\\
          &&&&&&\\
          &&&&&&\\
          && \dessin{2cm}{R426} \ar@{<=>}[ddddll] && \dessin{2cm}{R422} \ar@{<->}[uuurr] &&\\
          &&&&&&\\
          &&&&&&\\
          &&&&&&\\
          \dessin{2cm}{R425} \ar@{<->}[ddrrr] &&&&&& \dessin{2cm}{R423} \ar@{<=>}[uuuull]\\
          &&&&&&\\
          &&& \dessin{2.2cm}{R424} \ar@{<->}[uurrr] &&&}}}
  \end{array}
  $$
  \caption{Realization of singular Reidemeister moves IV: {\footnotesize Simple arrows stand for combinations of regular elementary moves. Double arrows stand for rotation moves. Mirror Reidemeister moves are obtained by reflecting vertically all the pictures and swapping the nature of the decorations.}}
  \label{fig:Reidemeister_IV}
\end{figure}

\begin{proof}
  Let $G_1$ and $G_2$ be two singular grid diagrams which describe the same link.
  
  \saut
  
  Essentially thanks to rotation moves, we can turn all singular rows in $G_1$ and in $G_2$ into singular columns.
  Furthermore, by performing a combination of regular elementary moves first, we can assume that the decorations in any singular column are in adjacent cases, the two $O$'s being above the two $X$'s, and that every decoration which shares a row with one of the two middle decorations of a singular column is located on the left of this column.
  
  We denote by $D_1$ and $D_2$ the diagrams associated to $G_1$ and $G_2$ by bending all the vertical singular strands towards the right.
  Double points are then in the following position:
  $$
  \dessin{2cm}{LocalDouble}.
  $$
  
  First, we assume that $D_1$ and $D_2$ are isotopic as diagrams.
  Then we can choose an isotopy connecting $D_1$ to $D_2$ which rigidly preserves a neighbourhood of the crossings and of the double points, except for a finite number of times when a crossing is turned over or a double point fully rolled up around itself.
  When compared with the proof of Proposition 4 in \cite{Dynnikov}, only the latter case need some attention.
  However, it can also be easily realized as
  $$
  \begin{array}{lc}
    \begin{array}{c}
      \fbox{$\dessin{1.2cm}{RealRoll1}\sim\dessin{1.8cm}{RealRoll2}$}\ \ : \hspace{.2cm}
    \end{array}
    &
    \vcenter{\hbox{\xymatrix{\dessin{1.8cm}{Roll1} \ar@{<->}[r] & \dessin{2.6cm}{Roll2}}}}
  \end{array}
  $$
  \noi using stabilizations and commutations.
  During the process, some cyclic permutations or commutations involving a singular column may occur.
  But then, with the help of rotations moves, they can be replaced by regular moves.
  The grids $G_1$ and $G_2$ can thus be connected by a sequence of rotations and regular elementary moves.
  
  \saut
  
  Now, it is sufficient to realize all the Reidemeister moves to complete the proof.
  For the regular ones, we refer to the proof of Proposition 4 in \cite{Dynnikov}.
  Because of the required rigidity condition on double points, the last two Reidemeister moves split into eight cases.
  Figures \ref{fig:Reidemeister_V} and \ref{fig:Reidemeister_IV} handle with all of them.
  
  \saut
  
  Conversely, every elementary move clearly preserves the underlying singular link.
\end{proof}

\begin{remarque}
  The description can be restricted to grids with no singular row. Then rotation moves must be replaced by another moves involving only singular columns. There are several equivalent ways to define them. Lemma $1.3$ in \cite{These} gives some of them.
\end{remarque}

%%% Local Variables: 
%%% mode: latex
%%% TeX-master: "Singular"
%%% End: 

% Grid desingularization
\subsection{RoC desingularization}
\label{ssec:Grids_Desingularization}

There are four ways to split a singular RoC into two regular ones.
However, as illustrated in Figure \ref{fig:RoC_Desingularizations}, only two of them preserve the connections between decorations induced by the associated link.
We are interested only in these resolutions.
\begin{figure}[b]
  \centering
  $
  \xymatrix@!0@R=1.2cm@C=2.5cm{\dessin{2cm}{DesNo1}&& \dessin{2cm}{DesYes1}\\
    &\dessin{2cm}{DesInit} \ar[ur]|(.47){\textcolor{green}{\huge \checked}} \ar[ul]|(.47){\dessin{.5cm}{wrong}} \ar[dr]|(.47){\textcolor{green}{\huge \checked}} \ar[dl]|(.47){\dessin{.5cm}{wrong}} &\\
    \dessin{2cm}{DesNo2}&& \dessin{2cm}{DesYes2}}
  $
  $
  \xymatrix@!0@R=1.2cm@C=2.3cm{\hspace{.1cm} 0\textrm{--resolution}\\ \\ \hspace{.1cm} 1\textrm{--resolution}}
  $
  $
  \xymatrix@!0@R=1.2cm@C=3.2cm{\dessinH{2cm}{DesYesRow1} & \\
    &\dessinH{2cm}{DesInitRow} \ar[ul]|(.47){\textcolor{green}{\huge \checked}} \ar[dl]|(.47){\textcolor{green}{\huge \checked}}\\
    \dessinH{2cm}{DesYesRow2} & }
  $
  \caption{Desingularizations of singular RoCs}
  \label{fig:RoC_Desingularizations}
\end{figure}
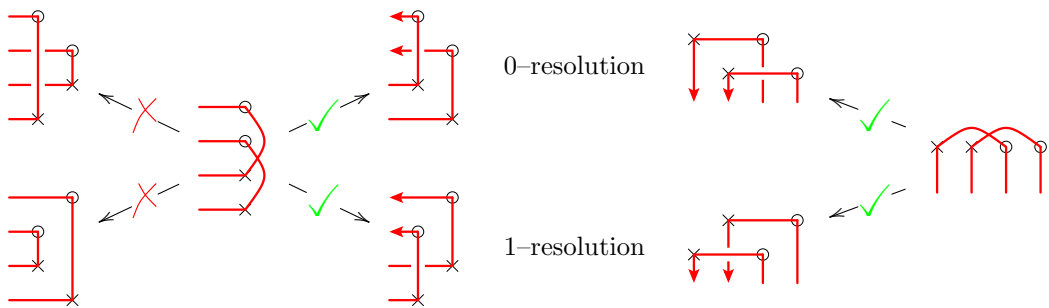
When dealing with a singular column, they can be distinguished by considering the slope of the line passing through its two topmost decorations:

-\hspace{.1cm}if the two decorations are of the same kind, then the \emph{$0$--resolution} corresponds to the negative slope.
We say the singular column is \emph{positively resolved} or \emph{$0$--resolved}.
The positive slope case is then called \emph{$1$--resolution} and the singular column is said to be \emph{negatively} or \emph{$1$--resolved};

-\hspace{.1cm}if the two decorations are of different kinds, then the $0$--resolution corresponds to the positive slope and the $1$--resolution to the negative one.

When dealing with a singular row, we give the same definitions but using the two rightmost decorations.\\

\begin{prop}
  \label{prop:Correspondance_GridResolution_LinkResolution}
  At the level of links, the $0$ and $1$--resolutions of a singular RoC correspond respectively to the positive and the negative resolutions of the double point associated to this RoC.
\end{prop}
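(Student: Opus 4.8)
The plan is to prove the statement by a direct local verification, since the grid desingularization of a singular RoC and the orientation-preserving resolution of the associated double point are both operations supported in a neighbourhood of that RoC; everything away from it is left untouched and contributes nothing to the comparison.

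First I would reduce to the case of a singular column. A singular row is handled by the very same picture after a quarter-turn, which is exactly why the conventions stated just before the proposition for rows use the two \emph{rightmost} decorations rather than the two topmost ones; alternatively, one may invoke a rotation move (Figure \ref{fig:Rotation_Moves}), which turns a singular row into a singular column while preserving the underlying singular link together with the orientation of its double point, and check directly from Figure \ref{fig:RoC_Desingularizations} that it carries the $0$- and $1$-resolutions of the row to those of the resulting column. So from now on the double point comes from a singular column $c$. Using regular cyclic permutations and commutations---which, by Theorem \ref{theo:Elementary_Moves}, do not change the underlying singular link---I would bring the four decorations of $c$ into four consecutive cells. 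Read from bottom to top they form one of the four patterns $OOXX$, $OXXO$, $XXOO$, $XOOX$; a top--bottom reflection of the whole grid (which exchanges the roles of the two kinds of decorations) together with a left--right reflection identifies these patterns pairwise, so there are really only one or two cases, but it costs nothing to run through all four.

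For each pattern I would then draw three local pictures near the double point: (i) the singular diagram itself, recording the orientations of the two branches through the double point, which are forced by the rule that horizontal strands run from the $O$ to the $X$ together with the vertical connection rule (uppermost decoration to the third, second to the lowermost); (ii) the diagram obtained from the $0$-resolution, using the slope convention of Figure \ref{fig:RoC_Desingularizations} (negative slope of the two topmost decorations when they are of the same kind, positive slope when of different kinds); and (iii) the diagram obtained from the $1$-resolution. Splitting $c$ into two adjacent regular columns replaces the transverse double point by a genuine crossing whose over- and under-strand are determined by the standing convention that vertical strands overpass horizontal ones, together with the choice of which of the two new columns absorbs which pair of decorations. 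Comparing the resulting pictures with the skein diagrams $\dessin{.5cm}{Pos}$ and $\dessin{.5cm}{Neg}$ of (\ref{eq:Vassiliev}) shows in each case that the $0$-resolution produces the positive crossing and the $1$-resolution the negative one, which is the assertion.

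The only genuinely delicate point is the bookkeeping of the over/under information: in the singular diagram the two branches meet transversally with no over/under data, and one must check that the grid split assigns the over-strand consistently with the sign convention of the skein relation (\ref{eq:Vassiliev}). This is precisely what the slope conventions preceding the statement are set up to do, so the verification reduces to the finite check of the four local patterns (with their mirror images) described above, with all orientations read off from the $O\to X$ rule.
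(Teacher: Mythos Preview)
Your proposal is correct and follows the same basic strategy as the paper: reduce to a singular column and perform a direct local check of the crossing sign. The execution differs slightly. The paper first observes that the $0$/$1$ labeling is \emph{invariant under cyclic permutations of the rows}: when the topmost decoration of $c$ is sent to the bottom, both the slope sign of the line through the two topmost decorations and their ``same kind/different kind'' status flip simultaneously, so the resolution called ``$0$'' stays the same splitting. This lets the paper normalize to a single configuration (the rightmost column with the two $O$'s above the two $X$'s) and check only that one case against Figure~\ref{fig:RoC_Desingularizations}.

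You instead normalize to four consecutive cells and then run through all four patterns $OOXX$, $OXXO$, $XXOO$, $XOOX$. This is perfectly valid, though your normalization step is both unnecessary and slightly under-justified: unnecessary because the crossing sign already depends only on the vertical \emph{order} of the four decorations, not on their spacing, so the four patterns exhaust all cases without any grid moves; under-justified because you invoke Theorem~\ref{theo:Elementary_Moves} to say the moves preserve the underlying link, but you do not check they preserve the $0$/$1$ labeling, which is precisely the paper's invariance observation. Since you check all four patterns anyway, this gap is harmless. In short: same idea, the paper trades three extra case-checks for a one-line invariance remark.
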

\begin{proof}
  First, we deal with the desingularization of a singular column $c$.
  Up to cyclic permutation, the distinction between $0$ and $1$--resolutions is consistent.
  Actually, when a cyclic permutation of RoCs affects the topmost decoration of $c$, it changes its possible equalness of nature with the second topmost decoration of $c$, but it changes also the slope sign of the line passing through them.
  We can hence assume that $c$ is the rightmost column and that its two $\O$--decorations are above its two $\X$--ones.
  Then we are in the case illustrated in Figure \ref{fig:RoC_Desingularizations} and the proposition can be checked directly.

  Desingularization of singular rows can be treated in a similar way.
\end{proof}

Resolving a singular RoC can be seen as drawing an arc inside this singular RoC, which meanders between decorations and meets at most once other grid lines:
$$
\xymatrix@!0@R=1.5cm@C=3.7cm{& \dessinH{2.5cm}{Meander0} & **[l] 0\textrm{--resolution}\\
  \dessinH{2.5cm}{Meander} \ar[ur] \ar[dr] & & \\
  & \dessinH{2.5cm}{Meander1} & **[l] 1\textrm{--resolution}}.
$$

\vspace{-.7cm}
{\parpic[r]{$\dessinH{2.8cm}{WindingArcs}$}
  {\vspace{1cm}A set of \emph{winding arcs} is a set of two such arcs which meet each other in exactly four points disjoint from the grid lines.
    By convention, we denote by $\alpha$ the arc corresponding to the $0$--resolution and by $\beta$ the arc corresponding to the $1$--resolution.}
  
}
\vspace{.6cm}
If considering the torus obtained by gluing the opposite sides of the grid, the arcs $\alpha$ and $\beta$ bound four bigons and each of them contains exactly one decoration.
Every element of $\alpha\cap\beta$ is then at the intersection of two bigons which can be identified with the decorations they contain.
Furthermore, these two bigons can be ordered using the notion of being above or being on the right inherited from the grid.
It defines a \emph{type} for each element of $\alpha\cap\beta$.
In what follow, we will exclusively be interested in arcs intersections of types $\dessin{.4cm}{Cstyle}$, $\dessin{.4cm}{Cpstyle}$, $\dessinH{.4cm}{rotCstyle}$ and $\dessinH{.4cm}{rotCpstyle}$.

%%% Local Variables: 
%%% mode: latex
%%% TeX-master: "Singular"
%%% End: 

%%% Local Variables: 
%%% mode: latex
%%% TeX-master: "Singular"
%%% End: 

\newpage
% Singular link Floer homology
\section{Singular link Floer homology}
\label{sec:Homology}

% Singular link Floer homology definition
\subsection{Definition}
\label{ssec:Homology_Definition}

The singular link Floer homology can be inductively defined as the mapping cone of a map associated to a given double point.
But in this paper, we will give a direct definition of the whole differential.
It is defined by counting grid polygons which generalize the regular rectangles.

\saut

Let $G$ be a singular grid of size $n$ with $k\in\N$ singular RoCs enhanced with winding arcs and $\T$ the torus obtained by identifying the opposite borders of $G$.
For convenience, we label the singular RoCs and all their associated objects with integers from $1$ to $k$ and the element of $\O$ with integers from $1$ to $n$.

For every $I=(i_1,\cdots,i_k)\in\{0,1\}^k$, we denote by $G_I$ the regular grid obtained by performing a $i_j$-resolution on the $j^\textrm{th}$ singular element for all $j\in \llbracket 1,k\rrbracket$.
At the level of bigraded $\Z[U_{O_1},\cdots,U_{O_{n}}]$--modules, we set
$$
CV^-(G)= \bigoplus_{I\in \{0,1\}^k}C^-(G_I)[-\#(I)],
$$
\noi where $\#(I)$ is the number of $1$ in $I$.
The value $\#(I)$ defines a third grading which is called \emph{desing grading}.

\begin{figure}[b]
 $$
  \begin{array}{ccccc}
    \dessin{3.7cm}{Rectangle} & \hspace{.8cm} & \dessin{3.7cm}{Hexagone1} & \hspace{.8cm} & \dessin{3.7cm}{Heptagone}\\[1cm]
    \textrm{an empty rectangle} && \textrm{an empty hexagon} && \textrm{an empty heptagon}\\[1cm]
    \dessin{3.7cm}{Pentagone} & \hspace{.8cm} & \dessin{3.7cm}{Hexagone2} & \hspace{.8cm} & \dessin{3.7cm}{Octogone}\\[1cm]
    \textrm{an empty pentagon} && \textrm{an empty hexagon} && \textrm{an empty octogon}
  \end{array}
$$ 
  \caption*{Examples of grid polygons connecting $x$ to $y$: {\footnotesize dark dots describe the generator $x$ while hollow ones describe $y$. Grid polygons are depicted by shading. With respect to the number of peaks, there are five kind of grid polygons with more than four corners.}}
    \label{fig:GridPolygons}
\end{figure}
\saut

Let $P$ be a set of peaks \ie a choice, for every $i\in\llbracket 1,k\rrbracket$, of an element in $\alpha_i\cap\beta_i$ of type $\dessin{.4cm}{Cstyle}$, $\dessin{.4cm}{Cpstyle}$, $\dessinH{.4cm}{rotCstyle}$ or $\dessinH{.4cm}{rotCpstyle}$.

\saut

Let $x$ and $y$ be generators of  $CV^-(G)$. A \emph{grid polygon connecting $x$ to $y$} is a polygon $\pi$ embedded in $\T$ which satisfies:
\begin{itemize}
\item[-] $\p \pi$ is embedded in the grid lines (including the winding arcs);
\item[-] $C(\pi)$, the set of corners of $\pi$, is a subset of $x\cup y \cup P$, with $C(\pi) \cap x \neq \emptyset$;
\item[-] starting at a element of $C(\pi)\cap x$ and running positively along $\p \pi$, according to the orientation of $\pi$ inherited from the one of $\T$, we first follow an horizontal arc and the corners of $\pi$ we meet are then successively and alternatively points of $y$ and $x$ with, possibly, an element of $P$ inserted between any two of them ;
\item[-] except on $\p \pi$, the sets $x$ and $y$ coincide;
\item[-] for any element $c\in C(\pi)$, $\Int (\pi)$ does not intersect the grid lines (including the winding arcs) in a neighbourhood of $c$.
\end{itemize}
The set $C(\pi)\cap P$ is called the \emph{set of peaks of $\pi$}.
The grid polygon is \emph{empty} if $\Int (\pi)\cap x=\emptyset$.
We denote by $\Pol (G,P)$ the set of all empty grid polygons on $G$ with at least four corners and by $\Pol (x,y,P)$ the set of those which connect $x$ to $y$.

\saut

Figure \ref{fig:Pol->Rect} illustrates how the peaks of a grid polygon can be embanked by adding or deleting some grid triangles and some bigons.
Then by moving, when necessary, dots from $\beta$ to $\alpha$--curves as shown in Figure \ref{fig:beta->alpha}, we define a map 
$$
\func{\phi}{\Pol (G,P)}{\Rect (G_{0\cdots 0})}.
$$

\begin{figure}[b]
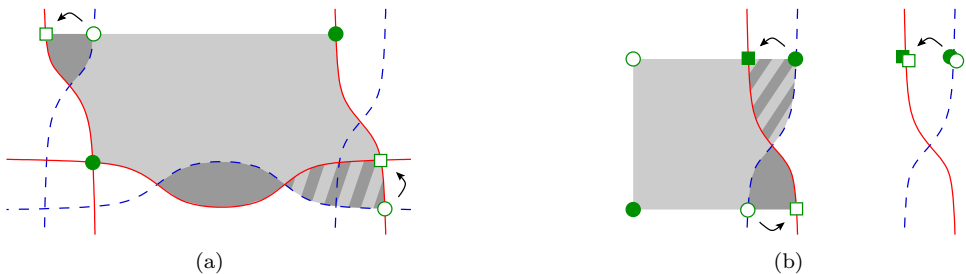

  \subfigure[]{$\dessin{3.4cm}{PolRect}$ \label{fig:Pol->Rect}}
  \hspace{2cm}
  \subfigure[]{$\dessin{3.4cm}{BetaAlpha_Rect}\hspace{1cm} \dessin{3.4cm}{BetaAlpha}$ \label{fig:beta->alpha}}
  \caption{From grid polygons to rectangles: {\footnotesize dark dots describe the initial generator of the grid polygon while hollow ones describe the final one. The initial grid polygon is depicted by clear shading whereas the grid triangles and bigons are depicted by dark one.}}
\end{figure}

Conversely, a grid polygon with at least four corners can be obtained by adding peaks on a rectangle.
Depending on the relative position, in this rectangle, of the edge where a peak is added, we say the peak is of \emph{compass type North}, \emph{East}, \emph{South} or \emph{West}.

\saut

Now we state a lemma relating the Maslov and the Alexander degrees of generators connected by a grid polygon.

\begin{lemme}
  \label{lem:Grading_Respect}
  Let $x$ and $y$ be two generators of $CV^-(G)$ and $\pi$ a grid polygon in $\Pol (x,y,P)$. Then
  $$
  \begin{array}{l}
    M(x)-M(y) = 1 - 2\#(\pi\cap\O),\\[.2cm]
    A(x)-A(y) = \#(\pi\cap\X) - \#(\pi\cap\O).
  \end{array}
  $$
\end{lemme}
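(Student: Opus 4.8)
The plan is to reduce the statement to the already-known analogous fact for empty rectangles in the regular case (the grading formulas for the link Floer differential, established in \cite{MOST}), using the map $\func{\phi}{\Pol(G,P)}{\Rect(G_{0\cdots 0})}$ introduced just before the lemma. The key observation is that the Maslov and Alexander gradings $M(\,.\,)$ and $A(\,.\,)$ are defined entirely in terms of the combinatorial quantities $M_{\O}(x) = \J(x,x) - 2\J(x,\O) + \J(\O,\O) + 1$ and $M_{\X}(x)$, together with the polynomial degree $\deg(\alpha)$; and that these quantities, while computed in different summands $C^-(G_I)$ and $C^-(G_J)$ of $CV^-(G)$, transform in a controlled way under the desingularization resolutions. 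So the first step is to record, for a grid polygon $\pi$ connecting $x\in C^-(G_I)$ to $y\in C^-(G_J)$, exactly how $I$ and $J$ differ: a peak of $\pi$ sitting at an element of $\alpha_i\cap\beta_i$ forces the $i$-th coordinate to jump, and the interplay of the desing grading shift $[-\#(I)]$ with the set of peaks $C(\pi)\cap P$ must be unwound. I would isolate this bookkeeping as the combinatorial heart of the argument.

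Next I would analyze the local contribution of each compass-type peak (North, East, South, West) to the change $M_{\O}(x) - M_{\O}(y)$ and $M_{\X}(x) - M_{\X}(y)$. Concretely: starting from the rectangle $\phi(\pi)\in\Rect(G_{0\cdots 0})$, for which the regular grading formulas give $M(\text{source}) - M(\text{target}) = 1 - 2\#(\phi(\pi)\cap\O)$ and $A(\text{source}) - A(\text{target}) = \#(\phi(\pi)\cap\X) - \#(\phi(\pi)\cap\O)$, I would track how each of the two operations building $\pi$ back from $\phi(\pi)$ — adding a peak (adjoining a grid triangle, as in Figure~\ref{fig:Pol->Rect}) and sliding a dot from a $\beta$-curve to the corresponding $\alpha$-curve (Figure~\ref{fig:beta->alpha}) — changes the relevant $\J$-counts and the decoration counts $\#(\cdot\cap\O)$, $\#(\cdot\cap\X)$. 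The point I expect to come out is that each such modification alters the two sides of both equations by matching amounts, so the identities are preserved step by step; in particular the number of $O$'s and $X$'s enclosed is manifestly an invariant of $\pi$ itself, independent of the chosen reference rectangle.

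Then I would assemble these local computations: sum the contributions over all peaks and over the dot-slides, add the desing-grading shifts $[-\#(I)]$ and $[-\#(J)]$ (which enter $M$ additively and are accounted for exactly by the number of resolution changes, i.e.\ by the peaks crossed), and check the $\{l\}$-shifts entering $A$ the same way. The Alexander formula should then follow from the Maslov one together with the definition $A(\alpha.x) = \tfrac12(M_{\O}(x) - M_{\X}(x)) - \tfrac{n-\ell}{2} - \deg(\alpha)$, since the difference $M_{\O} - M_{\X}$ is what measures $\#(\pi\cap\X) - \#(\pi\cap\O)$, the genus-type term $\tfrac{n-\ell}{2}$ being unaffected (the resolutions do not change $n$, and for the relevant resolutions of Proposition~\ref{prop:Correspondance_GridResolution_LinkResolution} the component count behaves compatibly).

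The main obstacle I anticipate is the case analysis for the peaks: a grid polygon can have up to four "extra" corners beyond a rectangle, of the five shapes displayed in Figure~\ref{fig:GridPolygons} (pentagon through octagon), and one must verify that the grading bookkeeping is uniform across all compass types and all combinations of them, including the subtle point that a peak lies on a winding arc rather than a genuine grid line, so it is not itself a point of $\O\cup\X$ and must not be miscounted in $\#(\pi\cap\O)$ or $\#(\pi\cap\X)$. Getting the signs and the off-by-one in the Maslov formula right through the triangle/bigon surgeries of Figures~\ref{fig:Pol->Rect} and~\ref{fig:beta->alpha}, consistently with the desing-grading shift convention $[-\#(I)]$, is where the real care is needed; everything else is a routine unwinding of the definitions of $\J$, $M_{\O}$ and $M_{\X}$.
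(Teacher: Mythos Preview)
Your proposal is correct and follows essentially the same route as the paper: both arguments reduce to the rectangle case (Lemma~2.5 of \cite{MOST}) via the peak-embanking operations of Figures~\ref{fig:Pol->Rect} and~\ref{fig:beta->alpha}, tracking how the $\J$-counts and decoration counts change under each triangle/bigon surgery and absorbing the desing-grading shift. The only organizational difference is that the paper packages this as an induction on the number $s$ of edges of $\pi$ --- removing a single peak to obtain a polygon $\pi'$ with $s$ edges connecting $x$ to an auxiliary generator $z$, computing $M(z)-M(y)$ by a direct local case analysis (four cases, according to how many $\O$'s the triangle and bigons contain and whether they are ripped), and invoking the inductive hypothesis on $\pi'$ --- whereas you propose to pass directly to $\phi(\pi)$ and sum all the local corrections at once. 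The inductive framing is slightly cleaner because it isolates one peak at a time and avoids having to check that the local contributions compose correctly, but the substance is the same; the paper also uses your observation that the Alexander statement follows from the Maslov one by swapping $\O$ and $\X$ and subtracting.
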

\begin{proof}
  It is sufficient to prove the statement for $M$ since replacing $\O$ by $\X$ and substracting the result to the $\O$--case prove it for $A$.
  We will do it by recurrence on $s$, the number of edges of $\pi$.

  The case $s=4$ corresponds to Lemma $2.5$ in \cite{MOST}.
  Now we suppose that the proposition is true for $s\geq 4$ and that $\pi$ has $s+1$ edges.
  Then $\pi$ has at least one peak $p$ and we suppose that it is of compass type East and type $\dessin{.4cm}{Cstyle}$.
  The other cases can be treated in a similar way or by using formulas for the behaviour of $M$ under rotation, reflection and swap of $\O$ and $\X$, formulas given in the proof of Proposition \ref{prop:Symetries}.
  
  According to Lemma $2.4$ in \cite{MOST}, the Maslov grading is invariant under cyclic permutation of the rows or of the columns.
  Hence, we can assume, without loss of generality, that, when read from bottom to the top, the decorations of the singular column where $p$ is located are ordered as follows: $XXOO$.
  Now, as in Figure \ref{fig:Pol->Rect}, we embank $p$ by adding or deleting a triangle $\tau_0$ and, possibly, some bigons $\tau_1,\dots, \tau_r$ in order to get a grid polygon $\pi'$ with $s$ edges connecting $x$ to a new generator $z$.
  Depending on the cardinality of $\O\cap(\bigcup_{i=0}^r\tau_i)$ and whether the triangle and the bigons are ripped or not, four cases have to be checked.
  In each case, it is straightforward to compute $M(z)-M(y)$, paying attention not to omit the shift in degree occuring in the definition of $CV^-(G)$, and to conclude using the recurrence assumption on $\pi'$.

  For instance, if the cardinality is one, then the triangle or one of the bigons is ripped and we have $M(z)-M(y)=2$.
  But, since $\pi'$ has $s$ edges, $M(x)-M(z)=1-2\#(\pi'\cap\O)=1-2\#(\pi\cap\O)-2$ and $M(x)-M(y)=1-2\#(\pi\cap\O)$.
  $$
  \dessin{3.2cm}{OneCase} \hspace{1cm} \leadsto \hspace{1cm} y:\ \dessin{3.2cm}{OneCaseY} \hspace{1cm} z:\ \dessin{3.2cm}{OneCaseX}
  $$
\end{proof}

\saut

Now, we choose a \emph{compass convention} $\func{C}{P}{\{\textrm{North},\textrm{South},\textrm{East},\textrm{West}\}}$ with $C(p)\in \{\textrm{North},\textrm{South}\}$ if $p$ is an intersection of winding arcs belonging to a singular row and $C(p)\in \{\textrm{East},\textrm{West}\}$ otherwise.
Then we define a sign map
$$
\function{\e}{\Pol (G,P)}{\pi}{\{\pm 1\}}{(-1)^{\mu(\pi,C)}\e\big(\phi(\pi)\big)}
$$
\noi where $\mu(\pi,C)$ counts the number of peaks $p$ of $\pi$ such that the compass type of $p$ is $C(p)$.

Finally, we define the map $\func{\p^-_{G,P,C}}{CV^-(G)}{CV^-(G)}$, or $\p^-_G$ for short, as the morphism of $\Z[U_{O_1},\cdots,U_{O_{n}}]$--modules defined on the generators by
$$
\p^-_G(x) = \sum_{\substack{y \textrm{ generator}\textcolor{white}{R}\\ \textrm{of }CV^-(G)}\textcolor{white}{I}} \sum_{\pi \in \Pol (x,y,P)} \e(\pi)U_{O_1}^{O_1(\pi)}\cdots U_{O_{n}}^{O_{n}(\pi)}\cdot y.
$$

\begin{prop}
  \label{prop:Chain_Complex}
  The couple $\big(CV^-(G),\p^-_G\big)$ is a filtered chain complex.
\end{prop}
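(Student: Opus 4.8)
The plan is to prove two things: first, that $\p^-_G$ is homogeneous of degree $-1$ for the Maslov grading and respects (non-strictly) the filtration induced by the Alexander grading; second, that $\p^-_G\circ\p^-_G=0$. The grading/filtration claim is an immediate consequence of Lemma \ref{lem:Grading_Respect}: for a polygon $\pi\in\Pol(x,y,P)$ the coefficient of $y$ in $\p^-_G(x)$ is multiplied by $\prod_O U_O^{O(\pi)}$, which drops the Maslov grading by $2\#(\pi\cap\O)$, so the total Maslov drop from $x$ to $U^{O(\pi)}y$ is $\big(M(x)-M(y)\big)-2\#(\pi\cap\O)=1$ by the first formula; similarly the Alexander grading changes by $\big(A(x)-A(y)\big)-\#(\pi\cap\O)=\#(\pi\cap\X)-2\#(\pi\cap\O)\le 0$, using $\#(\pi\cap\X)\ge 0$... more carefully, the relevant inequality is that $A$ does not increase, i.e. $A(U^{O(\pi)}y)\le A(x)$, which holds since $A(x)-A(U^{O(\pi)}y)=\#(\pi\cap\X)\ge 0$. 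So $\p^-_G$ lowers $M$ by exactly one and is filtered for $A$.

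The substantial part is $(\p^-_G)^2=0$. Here I would follow the standard grid-homology strategy: expand $(\p^-_G)^2(x)$ as a signed sum over pairs $(\pi_1,\pi_2)$ of empty grid polygons with $\pi_1\in\Pol(x,z,P)$, $\pi_2\in\Pol(z,y,P)$, and show that all terms cancel in pairs (working over $\Z$, so signs must be tracked). The composite region $\pi_1\cup\pi_2$ — more precisely its underlying domain on $\T$ — is a ``singular domain'' of the type that appears once one passes to generalized polygons. I would classify these composite domains according to their local shape: away from peaks they are exactly the juxtapositions of two rectangles treated in \cite{MOST} (giving the usual pairing of domains into $L$-shapes, thin annuli, disjoint rectangles, etc.), and the new phenomena occur only near the peaks in the singular RoCs. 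The key tool is the map $\func{\phi}{\Pol(G,P)}{\Rect(G_{0\cdots 0})}$ together with the triangle/bigon ``embanking'' operation of Figure \ref{fig:Pol->Rect}: embanking commutes suitably with composition, so a composite singular domain pushes forward to a composite domain of rectangles in the resolved grid $G_{0\cdots 0}$, where $(\partial^-)^2=0$ is already known. One then checks that (a) each composite polygon domain in $G$ with nonzero coefficient either has a unique alternative decomposition into two empty polygons, or corresponds under $\phi$ to a cancelling pair downstairs, and (b) the sign $\e(\pi)=(-1)^{\mu(\pi,C)}\e(\phi(\pi))$ is designed exactly so that these two contributions carry opposite signs. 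The parity count $\mu(\pi,C)$ enters because reorganizing a composite domain through a peak can change which compass type a peak is seen to have, flipping $(-1)^{\mu}$ precisely once; combined with the already-established sign-cancellation of $\e$ on $\Rect(G_{0\cdots 0})$, this gives the desired cancellation.

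Concretely I would organize the proof as: (1) reduce to showing that for each pair of generators $x,y$ and each monomial $U^{\mathbf a}$, the signed count of pairs $(\pi_1,\pi_2)$ with $\pi_1\ast\pi_2$ of total multiplicity $\mathbf a$ connecting $x$ to $y$ is zero; (2) fix such a composite and analyze its domain $D=D(\pi_1)+D(\pi_2)$ on $\T$; (3) when $D$ involves no peak, quote the rectangle analysis of \cite{MOST} verbatim (the local model near a peak is literally absent); (4) when $D$ involves peaks, apply $\phi$ and the embanking moves of Figures \ref{fig:Pol->Rect} and \ref{fig:beta->alpha} to transport the pair to $G_{0\cdots 0}$, where either it is a genuine $(\partial^-)^2$-pair and cancels, or it degenerates; (5) in the degenerate cases (a peak swallowed by the embanking, or two decompositions of the same singular domain differing only in the order a peak and a rectangle-corner are traversed) exhibit the partner polygon by hand and verify via Lemma \ref{lem:Grading_Respect} and the definition of $\e$ that the signs are opposite. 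The hard part will be step (5): enumerating exactly the local configurations near a singular RoC in which a composite polygon admits a second decomposition — there are finitely many, governed by the four peak types $\dessin{.4cm}{Cstyle},\dessin{.4cm}{Cpstyle},\dessinH{.4cm}{rotCstyle},\dessinH{.4cm}{rotCpstyle}$ and the four compass types — and checking in each that the compass-parity correction $(-1)^{\mu(\pi,C)}$ restores the cancellation that $\e(\phi(\pi))$ alone would spoil. Once that bookkeeping is complete, $(\p^-_G)^2=0$ follows, and together with the grading computation this establishes that $\big(CV^-(G),\p^-_G\big)$ is a filtered chain complex.
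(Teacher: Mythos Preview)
Your proposal is correct and follows essentially the same approach as the paper: reduce the grading/filtration claim to Lemma~\ref{lem:Grading_Respect}, and for $(\p^-_G)^2=0$ split pairs $(\pi_1,\pi_2)$ according to whether a peak lies on $\p\pi_1\cap\p\pi_2$, pushing the non-peak case through $\phi$ to the known result on $G_{0\cdots 0}$ and handling the peak case by exhibiting an alternative decomposition with flipped compass type (and unchanged $\phi$-image). The paper's version is simply more economical: the ``degenerate'' peak case you anticipate in step~(5) reduces to a single pair of local pictures rather than a full enumeration over peak types and compass types, since the two decompositions satisfy $\phi(\pi_i)=\phi(\pi'_i)$ and differ only by the compass type of the shared peak, so the sign analysis is immediate.
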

\begin{proof}
  The map $\p^-_G\circ\p^-_G$ counts juxtapositions of two grid polygons.
  We will prove that they can be gathered in canceling pairs.

  Let $\pi_1\in \Pol(x,y,P)$ and $\pi_2\in \Pol(y,z,P)$ for $x$, $y$ and $z$ three generators of $CV^-(G)$.
  If $\p \pi_1 \cap \p \pi_2$ contains a peak of $\pi_1$ or $\pi_2$, then, up to rotation, it locally looks like one of the two pictures below, where grid polygons are partially represented by shading with different intensities and where the square is part of the intermediary generator $y$.
  $$
  \dessin{3.5cm}{DoubleDecompositionA} \hspace{1.5cm} \dessin{3.5cm}{DoubleDecompositionB}
  $$
  The other picture provides then an alternative decomposition into grid polygons $\pi'_1\in \Pol(x,y',P)$ and $\pi'_2\in \Pol(y',z,P)$, where $y'$ is a fourth generator of $CV^-(G)$, which involves a peak with opposite compass type.
  Moreover, $\phi(\pi_1)=\phi(\pi'_1)$ and $\phi(\pi_2)=\phi(\pi'_2)$, so the contributions cancel each other out.

  If $\p \pi_1 \cap \p \pi_2$ does not contain such a peak, then $\pi_1$, $\pi_2$ and their peaks induce a right inverse for $\phi$.
  The problem can be translated to the regular grid $G_{0\cdots 0}$ and the proofs of Propositions $2.8$ and $4.20$ in \cite{MOST} provide an alternative and canceling decomposition.

  \saut
  
  The fact that $\p^-_G$ preserves the Alexander filtration and decreases the Maslov grading by one is a consequence of Lemma \ref{lem:Grading_Respect}.
\end{proof}

\begin{cor_proof}
  \label{cor:Mapping_Cone}
  Let $r$ be a singular RoC in $G$ and $p$ the element of $P$ which belongs to $r$.
  Let $G_+$ and $G_-$ be the two grids obtained by desingularizing $r$.

  Then the $\Z[U_{O_1},\cdots,U_{O_{n}}]$-linear map $\func{f_p}{CV^-(G_+)}{CV^-(G_-)}$ defined on the generators by
  $$
  f_p(x) = \sum_{\substack{y \textrm{ generator}\textcolor{white}{R}\\ \textrm{of }CV^-(G_-)}\textcolor{white}{I}} \sum_{\pi \in \Pol (x,y,P,p)} \e(\pi)U_{O_1}^{O_1(\pi)}\cdots U_{O_{n}}^{O_{n}(\pi)}\cdot y
  $$
  \noi where $\Pol (x,y,P,p)$ is the set of grid polygons in $\Pol (x,y,P)$ which contain $p$ as a peak, is a chain map which anticommutes with the differentials defined for the set of peaks $P'=P\setminus \{p\}$ and the compass convention $C'=C_{|P'}$.

  The chain complex $\big(CV^-(G),\p^-_G\big)$ is the mapping cone of $f_p$.
\end{cor_proof}

%%% Local Variables: 
%%% mode: latex
%%% TeX-master: "Singular"
%%% End: 

% Main statement
\subsection{Statement}
\label{ssec:Homology_Statement}

Now, we deal with $L$ a singular link enhanced with an orientation for every double point \ie an orientation for each plane spanned by two transverse vectors which are tangent to $L$ at a double point.
Let $G$ be a singular grid for $L$ with an arbitrary set of winding arcs.

We define a set of peaks $P$ as follows.
Let $D$ be the link diagram obtained from $G$ by bending upward the strands in singular rows and towards the right in singular columns.
Then $P$ is defined by choosing, for a singular row (resp. singular column), the arcs intersection of type $\dessin{.4cm}{Cstyle}$ (resp. $\dessinH{.4cm}{rotCstyle}$) if the orientation for the associated double point coincides with the orientation inherited from the plane on which $D$ is drawn.
Otherwise, we choose the arcs intersection of type $\dessin{.4cm}{Cpstyle}$ (resp. $\dessinH{.4cm}{rotCpstyle}$).

Finally, we choose an arbitrary compass convention $C$.

\begin{theo}
  \label{theo:Invariance_Statement}
  The homology $H_*(CV^-(G),\p^-_{G,P,C})$, denoted by $HV^-$, depends only on the oriented singular link $L$ and on the orientation of its double points.
\end{theo}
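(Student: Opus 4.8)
The strategy is the standard one for grid homology invariance: show that the homology is unchanged under each of the elementary moves listed in Theorem \ref{theo:Elementary_Moves}, namely regular cyclic permutations, commutations of regular RoCs, (de)stabilizations meeting each singular RoC in at most one decoration, and rotations. Since the paper has set up $CV^-(G)$ as a direct sum of the chain complexes $C^-(G_I)$ of the desingularizations with a desing-grading shift, and since $\bigl(CV^-(G),\p^-_G\bigr)$ is by Corollary \ref{cor:Mapping_Cone} an iterated mapping cone of the maps $f_p$, the guiding principle throughout is the five-lemma / mapping-cone principle: if an elementary move away from all singular RoCs induces a quasi-isomorphism on each $C^-(G_I)$ compatibly with the maps $f_p$, then it induces a quasi-isomorphism on the total complex. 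So for any regular elementary move (commutation, cyclic permutation, or stabilization) performed in a part of the grid disjoint from every singular RoC, I would invoke the corresponding invariance results of \cite{MOST} applied simultaneously to all $G_I$, check that the quasi-isomorphisms there intertwine the polygon-counting maps $f_p$ (which only see a neighbourhood of the singular RoC and are thus unaffected), and conclude by the mapping-cone principle. This handles the first three families of moves.

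The remaining point is the rotation move, which is genuinely new: it converts a singular row into a singular column (or vice versa) and hence is not covered by \cite{MOST}. Here I would argue directly on the mapping cone description. Corollary \ref{cor:Mapping_Cone} identifies $CV^-(G)$, for the singular RoC $r$ being rotated, as the mapping cone of $f_p\colon CV^-(G_+)\to CV^-(G_-)$, where $G_\pm$ are the two desingularizations of $r$. After a rotation, the two desingularized grids $G_+'$ and $G_-'$ are related to $G_+$ and $G_-$ by a sequence of regular elementary moves (this is essentially the content of Figures \ref{fig:Reidemeister_IV}--\ref{fig:Reidemeister_V} and the rotation pictures of Figure \ref{fig:Rotation_Moves} when one forgets the singular structure — a rotation on a singular RoC restricts, on each resolution, to regular moves together with the fact that $0$- and $1$-resolutions are exchanged appropriately by Proposition \ref{prop:Correspondance_GridResolution_LinkResolution}). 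So by the already-established invariance under regular moves there are quasi-isomorphisms $CV^-(G_\pm)\simeq CV^-(G_\pm')$; the real work is to check that these quasi-isomorphisms commute, up to chain homotopy, with the chain maps $f_p$ and $f_{p'}$ counting polygons through the old and new peaks. Granting that, the induced map on mapping cones is a quasi-isomorphism and the invariance under rotation follows.

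The main obstacle will be precisely this last compatibility: proving that the wall-crossing/desingularization map $f_p$ is natural with respect to the quasi-isomorphisms induced by regular grid moves near (but not on) the singular RoC. Concretely one must produce, for each regular move, a chain homotopy $H$ with $f_{p'}\circ \Phi - \Phi'\circ f_p = \p H + H\p$, where $\Phi,\Phi'$ are the \cite{MOST}-type quasi-isomorphisms. These homotopies are built, as is customary in grid homology, by counting the relevant domains (pentagons, hexagons, and their degenerations) that interpolate between the two factorizations — the same polygon-counting bookkeeping as in Proposition \ref{prop:Chain_Complex}, only now with an external peak $p$ fixed on the boundary. The verification that the resulting domains pair up in canceling pairs, and that the sign assignment $\e$ (twisted by the compass convention $(-1)^{\mu(\pi,C)}$) is consistent throughout, is the technical heart; it parallels the ``$\p\circ\p=0$'' analysis of Proposition \ref{prop:Chain_Complex} and reduces, via the map $\phi$ to $\Rect(G_{0\cdots 0})$, to combinatorics already carried out in \cite{MOST}. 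Independence of the auxiliary choices — the set of winding arcs and the compass convention $C$ — would be dispatched separately: changing $C$ only rescales the differential by signs and gives an obviously isomorphic complex, while changing winding arcs is a local modification near a singular RoC that again fits the mapping-cone framework, being absorbed by an isotopy of the arcs $\alpha,\beta$ that does not alter the set of peak types under consideration.
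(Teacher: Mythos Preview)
Your overall plan---reduce to the move list of Theorem~\ref{theo:Elementary_Moves} and treat rotations via the mapping-cone description of Corollary~\ref{cor:Mapping_Cone}---matches the paper's organisation. The list of auxiliary choices (compass convention, winding arcs) is also right. But the execution you sketch for the three regular moves diverges from the paper and contains a real gap.

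The problem is the parenthetical ``$f_p$ \dots only see a neighbourhood of the singular RoC and are thus unaffected''. This is false: the grid polygons counted by $f_p$ are global objects on the torus---a pentagon with peak $p$ may stretch across the entire grid---so a commutation or stabilization performed far from the singular RoC \emph{does} change the set of polygons contributing to $f_p$. The \cite{MOST} quasi-isomorphisms on the pieces $C^-(G_I)$ therefore do \emph{not} strictly commute with the $f_p$'s, and your proposed mapping-cone/five-lemma step is not automatic. You would have to produce, for every regular move and every peak, a chain homotopy witnessing the commutation---exactly the work you defer to the rotation case. With several singular RoCs this becomes an iterated-cone compatibility problem (higher homotopies in a cube), which you do not address.

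The paper avoids this entirely. For commutation (Lemma~\ref{lem:Commutation_Isomorphism}) and for (de)stabilization it defines the comparison map \emph{directly on the total complex} $CV^-(G)$---e.g.\ $\phi_{\beta\gamma}$ counts ``commuting polygons'' that are allowed to carry peaks in the other singular RoCs---verifies it is a chain map by the same pairing-of-domains argument as in Proposition~\ref{prop:Chain_Complex}, and then uses the filtration by the desing grading to reduce the quasi-isomorphism check to the regular \cite{MOST} statement on each graded piece. No separate intertwining or homotopy bookkeeping is needed. Similarly, invariance under isotopy of winding arcs is not ``absorbed'' as you suggest: the differential genuinely changes when a peak crosses a grid line, and the paper gives an explicit homotopy between $f_p$ and $f_{p'}$ (Lemma~\ref{lem:Winding_Arcs_Isotopy}).

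For the rotation move your outline is closer to what the paper does: it really is a mapping-cone comparison, with the target cones sharing the same negative resolution $G_{\beta_1\beta_2}$ and with the positive resolutions related by two regular commutations. But the required homotopy is not routine: the paper constructs it from an auxiliary ``triangle'' map $\psi$ and a map $\varphi_1$ counting new $\beta\gamma\beta$-polygons, and the verification (Lemma~\ref{lem:Phi_Quasi_Isomorphism}, Figure~\ref{fig:Correspondence}) is the most delicate part of the proof.
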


%%% Local Variables: 
%%% mode: latex
%%% TeX-master: "Singular"
%%% End: 

% Singular link Floer homology consistency
\subsection{Consistency}
\label{ssec:Homology_Consistency}

Theorem \ref{theo:Invariance_Statement} can be divided in six invariance propositions:
\begin{itemize}
\item[i)] under compass convention choice;
\item[ii)] under isotopy of winding arcs;
\item[iii)] under regular cyclic permutation of RoCs;
\item[iv)] under commutation of regular RoCs;
\item[v)] under (de)stabilization;
\item[vi)] under rotation.
\end{itemize}

\subsubsection{Compass convention}
\label{sssec:Compass_Convention}

The first point follows from Corollary \ref{cor:Mapping_Cone}.
Actually, changing the value of the compass convention for a given peak $p\in P$ corresponds to changing $f_p$ for $-f_p$.

\subsubsection{Isotopy of winding arcs}
\label{sssec:Winding_Arcs_Isotopy}

An isotopy of winding arcs modifies the differential each time an element of $P$ crosses a grid line.
So let  $(\alpha,\beta)$ and  $(\alpha',\beta')$ be two sets of winding arcs which are identical except near an horizontal grid line, denoted by $l$.
We assume that the arcs $\alpha$ and $\beta$ intersect in $p\in P$ on a side of $l$, whereas $\alpha'$ and $\beta'$ intersect in $p'$ on the other side. 
$$
\dessin{2.5cm}{abt}  \hspace{2cm}  \dessin{2.5cm}{abb}
$$
The case of an element of $P$ crossing a vertical grid line can be treated in a similar way.
With its terminology and applied to $p$ and $p'$, Corollary \ref{cor:Mapping_Cone} claims that the homologies associated to each set of winding arcs can both be described as the mapping cones of maps
$$
\func{f_p,f_{p'}}{CV^-(G_+)}{CV^-(G_-)}.
$$
We will prove that $f_p$ and $f_{p'}$ are homotopic.

For that purpose, we say that a generator $x$ of $CV^-(G_+)$ is \emph{$l$--linked} to a generator $y$ of $CV^-(G_-)$ if and only if $x\setminus(\alpha\cap l)=y\setminus(\beta\cap l)$.
Then, for all generators $x$ of $CV^-(G)$, we set
$$
h(x)=\left\{\begin{array}{cl}
    y & \textrm{if }x\textrm{ and }y\textrm{ are }l\textrm{--linked}\\
    0 & \textrm{otherwise}
  \end{array}\right.
$$
\noi and we extend $h$ to $CV^-(G_+)$ by $\Z[U_{O_1},\cdots,U_{O_n}]$--linearity.

\saut

\begin{lemme}
  \label{lem:Winding_Arcs_Isotopy}
  The map $h$ is an homotopy map between $f_p$ and $f_{p'}$ \ie it satisfies
  $$
  f_p - f_{p'}=h\circ\p^-_{G_+} - \p^-_{G_-}\circ h
  $$
  \noi where $\p^-_{G_+}$ and $\p^-_{G_-}$ are defined using compass conventions which coincide on common peaks and send $p$ and $p'$ to $\textrm{West}$.
\end{lemme}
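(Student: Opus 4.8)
The claim is the standard ``homotopy moves across a grid line'' statement, so the plan is to model the proof on the analogous isotopy-invariance arguments in \cite{MOST} (Lemmas~3.2--3.3 there), adapted to grid polygons with peaks. First I would expand both sides of the claimed identity
$$
f_p - f_{p'} = h\circ\p^-_{G_+} - \p^-_{G_-}\circ h
$$
as sums over the relevant combinatorial objects: the left-hand side counts grid polygons in $\Pol(x,z,P)$ having $p$ as a peak minus those having $p'$ as a peak; the right-hand side counts pairs consisting of a grid polygon together with an $l$-linked ``jump'' across the horizontal line $l$. The goal is to set up a bijection between the objects counted on the two sides (outside of pairs that cancel among themselves on the right-hand side), respecting signs. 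Because $l$ is a single horizontal grid line and $p,p'$ sit on opposite sides of it but are otherwise in the same singular column, the local picture near $l$ is essentially two-dimensional and finite, so the matching can be organized by how the boundary of the polygon interacts with $l$ and with the two marked points $\alpha\cap l$, $\beta\cap l$ used to define $h$.

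\textbf{Key steps.} (1) Reduce to the West compass convention for $p$ and $p'$ as in the statement, so that $\mu(\pi,C)$ contributes no sign subtlety at the peak under consideration; the remaining sign bookkeeping is then entirely carried by $\e(\phi(\pi))$, i.e.\ by the sign assignment on the underlying regular grid $G_{0\cdots0}$. (2) Classify the terms of $h\circ\p^-_{G_+}$ and $\p^-_{G_-}\circ h$ geometrically: a nonzero term arises exactly when a grid polygon $\pi$ and the $l$-link ``domain'' can be concatenated, and — as in \cite{MOST} — such concatenations come in two flavours, those whose union is again a single grid polygon (with a peak at $p$ or at $p'$) and those whose union is a disjoint configuration or a polygon that is decomposed in two different ways. (3) Show the first flavour produces precisely $f_p - f_{p'}$: a grid polygon with $p$ as a peak, when we slide the peak across $l$, either becomes a grid polygon with $p'$ as a peak (these match the $f_{p'}$ term) or splits off an $l$-link piece (these match the $h\partial$/$\partial h$ terms), and conversely. (4) Show the second flavour cancels in pairs, using the same ``alternative decomposition'' mechanism as in the proof of Proposition~\ref{prop:Chain_Complex} — each offending concatenation has a partner obtained by re-cutting, with opposite overall sign. (5) Invoke Lemma~\ref{lem:Grading_Respect} to confirm $h$ has the right degree (it raises the Maslov grading by one and preserves the Alexander filtration, as a homotopy between chain maps of the appropriate degree must), which also guarantees the formula is homogeneous and the counts are finite.

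\textbf{Main obstacle.} The delicate part is the sign verification in steps (3) and (4): one must check that under $\phi$ the reorganized polygons map to the same rectangles (or to rectangles whose sign-assignment values differ by exactly the factor needed), and that the parity contributed by $\mu(\cdot,C)$ at the moved peak behaves correctly when the peak crosses $l$. This is where the hypothesis that the compass conventions agree on common peaks and send both $p$ and $p'$ to West is essential, and where the argument leans hardest on the properties of $\e$ established in \cite{MOST}. I expect everything else — the combinatorial classification of concatenations and the cancellation of the ``bad'' pairs — to be a routine, if somewhat lengthy, adaptation of the regular-grid arguments, the only genuinely new feature being the possible presence of other peaks of $P$ in $\pi$, which play no role near $l$ and can simply be carried along unchanged.
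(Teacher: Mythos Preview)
Your proposal is correct and follows essentially the same approach as the paper: identify the non-cancelling terms of $f_p-f_{p'}$ as precisely those grid polygons whose peak at $p$ (or $p'$) cannot be slid across $l$ because $\alpha\cap l$ or $\beta\cap l$ is already a corner, match these with the corresponding terms of $h\circ\p^-_{G_+}$ or $\p^-_{G_-}\circ h$, and observe that all remaining terms of $h\circ\p^-_{G_+}$ and $\p^-_{G_-}\circ h$ cancel in pairs. The paper's proof is considerably terser (three sentences, with the sign check declared ``no difficulty''), whereas you plan a more explicit sign verification via $\phi$ and a grading check via Lemma~\ref{lem:Grading_Respect}; but the underlying bijection and cancellation mechanism are the same.
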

\begin{proof}
  The surviving terms in $f_p-f_{p'}$ correspond to grid polygons with $p$ or $p'$ and $\alpha\cap l$ or $\beta\cap l$ as corners.
  They are terms of $h\circ \p^-_{G^+}$ when $\beta\cap l$ is a corner of the grid polygon and terms of $\p^-_{G^-}\circ h$ when $\alpha\cap l$ is.
  There is no difficulty in checking that the signs coincide.

  The remaining terms in $h\circ \p^-_{G_+}$ cancel with terms in  $\p^-_{G_-}\circ h$.
\end{proof}

\subsubsection{Cyclic permutation}
\label{sssec:Cyclic_Permutation}

Since grid polygons are embedded in the torus, they are clearly preserved by cyclic permutations.
Hence, we only need to pay attention to signs.

It is proven in \cite{MOST} and \cite{Gallais} that the sign assignment for rectangles is essentially unique in the sense that any two sign assignments define isomorphic chain complexes.
Since a cyclic permutation does not modify the compass type of grid polygon peaks and since a sign assignment for grid polygons depends only on its definition for rectangles and on the compass types, the invariance still holds in the singular case.

\subsubsection{Regular commutation}
\label{sssec:Commutation}

In this section, we consider a commutation of regular columns.
Commutation of regular rows can be treated similarly.

\vspace{-\baselineskip}

{\parpic[r]{$\dessin{3.5cm}{CommutArcs}$}
  {\vspace{.6cm}Since the decorations of one of the two commuting columns are strictly above the decorations of the other one, the elementary move can be seen as replacing a distinguished vertical grid line $\beta$ by a different one $\gamma$, like in the picture on the right.

    \saut
    
    We denote by $G_\beta$ and $G_\gamma$ the corresponding grids.}
  
  \saut
  
}

By $P'$ we denote a common set of peaks considered for defining $\p^-_{G_\beta}$ and $\p^-_{G_\gamma}$.
Then, by considering the definition of grid polygons for a set of peaks $P=P'\cup(\beta\cap\gamma)$ and by including $\beta$ and $\gamma$ among the grid lines,  we define \emph{commuting polygons} connecting a generator $x$ of $CV^-(G_\beta)$ to a generator $y$ of $CV^-(G_\gamma)$.
One can observe that only one of the two elements of $\beta\cap\gamma$ can actually be realized as a peak.
This intersection is denoted by $p$ in the picture above.

For any pair of generators $x$ and $y$ of, respectively, $CV^-(G_\beta)$ and $CV^-(G_\gamma)$, we denote by $\PolC(x,y)$ the set of empty commuting polygons, with at least five corners, connecting $x$ to $y$.
Then, we can set the map $\func{\phi_{\beta\gamma}}{CV^-(G_\beta)}{CV^-(G_\gamma)}$ as the morphism of $\Z[U_{O_1},\cdots,U_{O_n}]$--modules defined on generators by
$$
\phi_{\beta\gamma}(x)=\sum_{\substack{y \textrm{ generator}\textcolor{white}{R}\\\textrm{of }CV^-(G_\gamma)\textcolor{white}{I}}} \sum_{\pi\in\PolC(x,y) } \e(\pi)U_{O_1}^{O_1(\pi)}\cdots U_{O_n}^{O_n(\pi)}\cdot y.
$$

\begin{lemme}
  \label{lem:Commutation_Isomorphism}
  The map $\phi_{\beta\gamma}$ is a chain quasi-isomorphism.
\end{lemme}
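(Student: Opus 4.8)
The plan is to follow the classical commutation-invariance argument from \cite{MOST} (the proof that the pentagon-counting map between grids related by a column commutation is a quasi-isomorphism), upgraded so that it applies to the filtered mapping-cone complex $CV^-(G)$ rather than to a single grid complex. First I would verify that $\phi_{\beta\gamma}$ is a chain map: the composition $\p^-_{G_\gamma}\circ\phi_{\beta\gamma} + \phi_{\beta\gamma}\circ\p^-_{G_\beta}$ counts juxtapositions of a commuting polygon with an ordinary grid polygon, and these decompose into canceling pairs by the same boundary-degeneration analysis used in Proposition \ref{prop:Chain_Complex} (a domain with the relevant boundary behaviour either decomposes in two ways as polygon-then-commuting-polygon, or has a cut giving an alternative decomposition through a different intermediate generator); the peaks carried along are untouched by the commutation, so by the essential uniqueness of the sign assignment (as in \ref{sssec:Cyclic_Permutation}) all signs match up. The only genuinely new feature compared to \cite{MOST} is the presence of the $k$ peaks from $P'$ inside the polygons, but since $\beta,\gamma$ are regular columns disjoint from every singular RoC, a commuting polygon still embanks to an ordinary commuting pentagon on $G_{0\cdots0}$ via the map $\phi$ of \ref{ssec:Homology_Definition}, and the peak combinatorics is exactly the bookkeeping already handled in Lemma \ref{lem:Grading_Respect}.

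Next I would prove that $\phi_{\beta\gamma}$ is a quasi-isomorphism by exhibiting an explicit homotopy inverse. Let $\phi_{\gamma\beta}\colon CV^-(G_\gamma)\to CV^-(G_\beta)$ be the map defined symmetrically by counting commuting polygons in the other direction. One shows $\phi_{\gamma\beta}\circ\phi_{\beta\gamma} \simeq \Id$ and $\phi_{\beta\gamma}\circ\phi_{\gamma\beta}\simeq \Id$ by constructing the homotopy $H$ that counts ``hexagonal'' domains obtained by juxtaposing two commuting polygons (one going $\beta\to\gamma$, one going $\gamma\to\beta$), exactly as in \cite{MOST}: the identity-plus-error term $\phi_{\gamma\beta}\circ\phi_{\beta\gamma} + H\circ\p^-_{G_\beta} + \p^-_{G_\beta}\circ H$ is analyzed by degenerating these hexagonal domains, and all terms cancel except the thin domain of width zero, which contributes exactly $\Id$ (the computation that the coefficient is $+1$, involving the product of $U_O$'s over decorations trapped between $\beta$ and $\gamma$, is identical to the regular case because no singular decoration lies there). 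Since $\phi_{\beta\gamma}$ respects the desing grading — a commuting polygon does not cross any winding arc other than through its single peak $p\in\beta\cap\gamma$, hence carries $C^-(G_{\beta,I})$ to $C^-(G_{\gamma,I})$ — it suffices to run this argument on each summand, so in fact $\phi_{\beta\gamma}$ is filtered and grading-preserving, and a filtered quasi-isomorphism with filtered homotopy inverse.

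The main obstacle I anticipate is not any single estimate but the verification that the degeneration arguments for $\p^-_G\circ\phi_{\beta\gamma}$, and for the homotopy $H$, remain exhaustive once peaks are allowed on the boundaries of the polygons being juxtaposed: one must check that a juxtaposition in which the shared boundary point is a peak (of one of the polygons, or of the intermediate generator region) always admits the ``other picture'' alternative decomposition, precisely the dichotomy already drawn in the proof of Proposition \ref{prop:Chain_Complex}. Concretely I would reduce every case to $G_{0\cdots0}$ by applying $\phi$ to embank all peaks, invoke Propositions $2.8$, $2.9$ (commutation) and $4.20$ of \cite{MOST} for the resulting rectangle-and-pentagon identities, and then check that lifting back up — moving dots from $\beta$- to $\alpha$-curves and re-inserting peaks as in Figures \ref{fig:Pol->Rect} and \ref{fig:beta->alpha} — is a bijection on the relevant moduli, with signs governed by $(-1)^{\mu(\pi,C)}$ exactly as in the definition of $\e$. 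Once that bookkeeping is in place, the quasi-isomorphism statement is formal.
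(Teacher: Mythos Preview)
Your chain-map verification (first paragraph) matches the paper's: the anticommutation of $\phi_{\beta\gamma}$ with the differentials is disposed of by the same juxtaposition-and-cancellation argument as Proposition~\ref{prop:Chain_Complex}.

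For the quasi-isomorphism, however, the paper takes a shorter route than you propose, and along the way you make one slip worth flagging. Your claim that ``a commuting polygon does not cross any winding arc other than through its single peak $p\in\beta\cap\gamma$'' is not correct: by the definition in \S\ref{sssec:Commutation}, commuting polygons are grid polygons for the enlarged peak set $P=P'\cup(\beta\cap\gamma)$, so they \emph{may} carry additional peaks from $P'$. Those terms strictly change $\#(I)$, so $\phi_{\beta\gamma}$ does not send $C^-(G_{\beta,I})$ to $C^-(G_{\gamma,I})$; it only preserves the desing \emph{filtration}.

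Once this is corrected, the paper's argument is essentially the last sentence of your second paragraph, minus the homotopy construction: since $\phi_{\beta\gamma}$ is filtered for the desing grading, its associated graded map splits as a direct sum over $I\in\{0,1\}^k$ of maps $C^-(G_{\beta,I})\to C^-(G_{\gamma,I})$, each of which is literally the pentagon map of \cite{MOST} on a regular grid and hence a quasi-isomorphism. Standard homological algebra (bounded filtration, graded quasi-isomorphism implies quasi-isomorphism) finishes. There is no need to build $\phi_{\gamma\beta}$ or the hexagon homotopy $H$ on the full singular complex, and thus no need to confront the peak-bookkeeping obstacle you identify in your third paragraph. Your direct approach would also work and would yield the slightly stronger conclusion of a filtered chain-homotopy equivalence, but the paper sidesteps that labor entirely by passing to the associated graded.
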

\begin{proof}
  The proof that $\phi_{\beta\gamma}$ anticommutes with the differentials is totally similar to the proof of Proposition \ref{prop:Chain_Complex}.

  Now we consider the filtration induced by the desing grading.
  The differentials and the map $\phi_{\beta\gamma}$ clearly preserve it.
  The associated graded chain complexes are the direct sums of the chain complexes associated to every desingularized grid, and, restricted to any of them, the graded map associated to $\phi_{\beta\gamma}$ is the eponyme morphism defined in Section $3.1$ of \cite{MOST}.
  In this paper, the authors prove that it is a quasi-isomorphism.
  It follows from standard homological algebra that $\phi_{\beta\gamma}$ is a quasi-isomorphism.
\end{proof}

\subsubsection{(De)Stabilization}
\label{sssec:Stabilization}

A stabilization replaces a $(1\times 1)$--subgrid $g_1$ by a $(2\times 2)$--subgrid $g_2$.  
Depending on the nature of the decoration in $g_1$ and on the square which is empty in $g_2$, there are eight different kinds of (de)stabilization. 
We prove invariance for the following one:

\begin{eqnarray}
  \label{thedot}
  \begin{array}{ccc}
    \dessin{1.7cm}{Stabil1} & \longrightarrow & \dessin{1.7cm}{Stabil2},\\[1cm]
    G && G^s\\
  \end{array}
\end{eqnarray}

\noi but, {\it mutatis mutandis}, the proof can be adapted to the seven others cases.

We denote by $O_1$ the new $\O$--decoration and by $O_2$ the one which is lying on the same row than $g_1$.
According to the definition of (de)stabilizations, $O_1$ cannot belong to a singular RoC.

\saut

The broad outlines of the proof are:
\begin{description}
\item[i) Description of $CV^-(G)$ using $G^s$]
  Every generator of $CV^-(G)$ can be seen as drawn on $G^s$ by adding $x_0$, the dot located at the south-west corner of $O_1$ (see the dot in (\ref{thedot})).
  The gradings are the same and the differential is given by ignoring the conditions involving $O_1$ or $x_0$ \ie grid polygons may contain $x_0$ in their interior and there is no multiplication by $U_{O_1}$.
\item[ii) Description of $CV^-(G)$ involving $U_{O_1}$]
  The chain map $CV^-(G)$ is quasi-isomorphic to the mapping cone $C=C_1\oplus C_2[-1]$ of the map
  $$
  \xymatrix@C=2cm{C_1:= \left(CV^-(G)\otimes\Z[U_{O_1}]\right)\{-1\}[-1]\ar[r]^(.53){\times (U_{O_2}-U_{O_1})} & (CV^-(G)\otimes\Z[U_{O_1}])[1]=: C_2}.
  $$
  Hence, it is sufficient to define a quasi-isomorphism from $CV^-(G^s)$ to $C$.
\item[iii) Crushing filtration]
  There is a filtration on $CV^-(G^s)$ such that the associated graded differential is the sum over thin rectangles which are contained in the row or in the column through $O_1$ and which do not contain $O_1$ or any $X$.
\item[iv) Graded quasi-isomorphism]
  The associated graded chain complex splits into the three following subcomplexes\footnote{though intuitive, details about the notation are given in the Appendix A of \cite{These}}:
  $$
  \xymatrix@C=2cm{
    \left\{\dessin{1.5cm}{Sub1}\right\} \ar[r]^{\dessin{.9cm}{DiffHor}}_\sim \ar@{}|{}="Nya"  \ar@(dl,ul)"Nya"!<-1.4cm,-.2cm>;"Nya"!<-1.4cm,.4cm>^{\dessin{1.1cm}{DiffVert}} & \left\{\dessin{1.5cm}{Sub2}\right\}\ar@{}|{}="Nya2"  \ar@(dr,ur)"Nya2"!<1.3cm,-.2cm>;"Nya2"!<1.3cm,.4cm>_{\dessin{1.1cm}{DiffVert}}},
  $$
  $$
  \xymatrix{\left\{\dessin{1.5cm}{Sub31}\ -\ \dessin{1.5cm}{Sub32}\right\} \ar@{}|{}="Nya" \ar@(dl,ul)"Nya"!<-2.3cm,-.2cm>;"Nya"!<-2.3cm,.4cm>^0}
  \hspace{1cm} \textrm{and} \hspace{.8cm}
  \xymatrix{\left\{\dessin{1.5cm}{Sub4}\right\}. \ar@{}|{}="Nya" \ar@(dl,ul)"Nya"!<-1.3cm,-0.2cm>;"Nya"!<-1.3cm,.4cm>^0}
  $$
  Then we can define a graded chain map $\func{F_{gr}}{CV^-(G^s)}{C}$ as
  $$
  F_{gr}:=\left\{\begin{array}{l}
      \xymatrix@C=2cm@R=0.5cm{\dessin{1.3cm}{Gen1} \ar@{|->}[r]^(.45){Id}& \dessin{1.3cm}{Gen1}\in C_1\\
        \dessin{1.3cm}{Gen2} \ar@{|->}[r]^(.45){\dessin{.9cm}{F}} & \dessin{1.3cm}{Gen1}\in C_2[-1] \hspace{.2cm}\\
        \dessin{1.3cm}{Gen3} \ar@{|->}[r] & \hspace{.5cm} zero \hspace{1.2cm}}\end{array} \right..
  $$
  It is obviously a quasi-isomorphism for the graded chain complexes.
\item[v) Filtered extension of $F_{gr}$]
  The map $F_{gr}$ can be extended to a map $\func{F}{CV^-(G^s)}{C}$ of filtered chain complexes.
  Essentially, commutativity of $F$ with rectangles which are empty except $x_0$ which is actually contained in their interior, \ie rectangles embedded in the grid $G^s$ which are involved in the differential associated to $G$ as described in i) but not in the differential associated to $G^s$, generates inductively additional terms in the definition of $F$.
  Since its graded part is a quasi-isomorphism, $F$ is a quasi-isomorphism.
\end{description}

Details for points i), ii), iv) and v) are strictly similar than those of Section $3.2$ in \cite{MOST}.
Concerning the filtration of the point iii), we give an alternative construction.

First, we consider the filtration induced by the Alexander grading, the total polynomial degree in variables $U_{O_1}, \cdots, U_{O_n}$ and the desing grading.
Then the associated graded differential, denoted by $\widetilde{\p}$, counts only empty rectangles containing no decoration.
Now, we define a fourth grading on $\big(CV^-(G^s),\widetilde{\p}\big)$.

Let $x$ be a generator of $CV^-(G^s)$.
By construction, the extra RoCs in $G^s$ can be crushed in order to get back to $G$.
When crushing with $x$ drawn on the grid, it provides a set $\widetilde{x}$ of dots which is not a generator of $CV^-(G)$ since two singular grid lines, one horizontal and one vertical, have two dots on it.
It may happen that two dots merge, the resulting dot is then counted with multiplicity.

We perform a few cyclic permutations in such a way that the singular grid lines are the leftmost and the topmost ones.
The upper right corner of the grid is then filled with an $\X$--decoration which we denote by $X^*$.

\saut

Now, we can consider
$$
M_G(x):= M_{\O_G}(\widetilde{x})
$$
\noi where $\O_G$ is the set of $\O$--decorations of $G$ and $M_{.}(\ .\ )$ is the map defined in the introduction, which compares the relative positions of two planar sets of points.

\begin{figure}[ht]
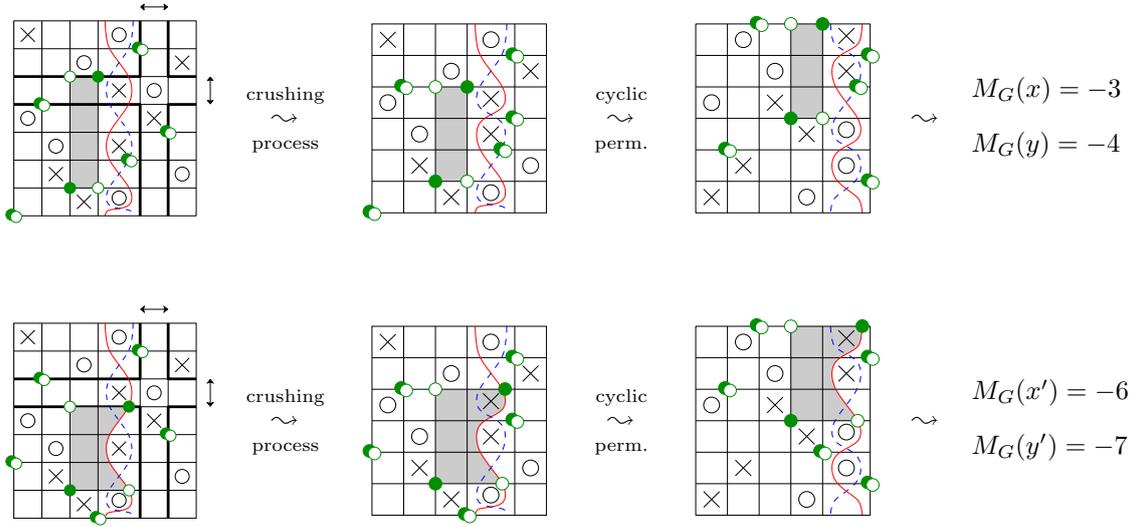

  $$
  \dessinH{3.2cm}{Crush1}\ \begin{array}{c}\textrm{\scriptsize crushing}\\[-.1cm] \leadsto\\[-.1cm] \textrm{\scriptsize process} \end{array} \dessinH{3.2cm}{Crush2} \begin{array}{c}\textrm{\scriptsize cyclic}\\[-.1cm] \leadsto\\[-.1cm] \textrm{\scriptsize perm.} \end{array} \dessinH{3.2cm}{Crush2bis}  \leadsto \ \ \begin{array}{l} M_G(x)=-3 \\[.3cm] M_G(y)=-4 \end{array}
  $$
  $$
  \dessinH{3.2cm}{Crush3}\ \begin{array}{c}\textrm{\scriptsize crushing}\\[-.1cm] \leadsto\\[-.1cm] \textrm{\scriptsize process} \end{array} \dessinH{3.2cm}{Crush4} \begin{array}{c}\textrm{\scriptsize cyclic}\\[-.1cm] \leadsto\\[-.1cm] \textrm{\scriptsize perm.} \end{array} \dessinH{3.2cm}{Crush4bis}  \leadsto \ \ \begin{array}{l} M_G(x')=-6 \\[.3cm] M_G(y')=-7 \end{array}
  $$
  \caption*{Crushing row and column:  {\footnotesize dark dots describe the initial generators $x$ and $x'$ while hollow ones describe the final ones $y$ and $y'$. Polygons are depicted by shading.}}
  \label{fig:Crushing}
\end{figure}

\begin{lemme}
  \label{lem:Crushing_Filtration}
  $M_G$ defines a filtration on $\big(CV^-(G^s),\widetilde{\p}\big)$.
  The associated graded differential corresponds to the sum over rectangles which are contained in the row or in the column through $O_1$ and which do not contain $O_1$ or any $X$.
\end{lemme}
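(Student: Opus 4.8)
The claim is a local statement about the stabilized grid $G^s$: the function $M_G$, defined by crushing the two extra singular lines and reading off a Maslov-type quantity $M_{\O_G}(\widetilde x)$ on the resulting (degenerate) planar configuration, is non-increasing under $\widetilde\p$, and drops by strictly more than zero on every rectangle of $\widetilde\p$ except those thin rectangles lying in the row or column through $O_1$ and avoiding $O_1$ and all $X$'s. Since $\widetilde\p$ is already the associated graded differential for the Alexander/polynomial-degree/desing filtration, it counts only empty rectangles containing no decoration; so one only ever has to analyse a single empty decoration-free rectangle $\rho\in\Rect(x,y)$ on $G^s$ and compare $M_G(x)$ with $M_G(y)$.

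The plan is to reduce everything to the already-known behaviour of $M_\O$ under rectangles, namely Lemma $2.5$ of \cite{MOST} (quoted in the proof of Lemma \ref{lem:Grading_Respect}), applied on the \emph{crushed} grid. First I would fix the normalisation: perform the cyclic permutations putting the two singular (doubled) grid lines as the leftmost column and topmost row, so that the upper-right corner carries the decoration $X^*$; by Lemma $2.4$ of \cite{MOST} this does not affect any Maslov count, so $M_G$ is well defined independently of that choice. Next, given an empty decoration-free rectangle $\rho$ connecting $x$ to $y$ on $G^s$, I would track what crushing does to $\rho$: the two moved corners of $\rho$ lie on ordinary (non-doubled) grid lines unless $\rho$ has an edge along the row or column through $O_1$. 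Crushing sends $x\mapsto\widetilde x$, $y\mapsto\widetilde y$, and $\rho$ maps to either (a) an honest empty rectangle $\widetilde\rho$ on $G$ connecting $\widetilde x$ to $\widetilde y$ — in which case $M_{\O_G}(\widetilde x)-M_{\O_G}(\widetilde y)=1-2\#(\widetilde\rho\cap\O_G)\le 1$, with equality exactly when $\widetilde\rho$ contains no $\O_G$-decoration; or (b) a degenerate region that collapses onto a segment of a singular line (this is precisely the case where $\rho$ is a thin rectangle running along the row or column of $O_1$), in which case $\widetilde x=\widetilde y$ as multisets and $M_G$ is literally unchanged. I would then check that in case (a) the rectangle $\widetilde\rho$, being the crush of a decoration-free rectangle on $G^s$, can only acquire decorations from $\O_G$ that were "hidden" by the crushing, and rule this out unless $\rho$ abuts $O_1$'s RoC — establishing the strict decrease $M_G(x)>M_G(y)$ for every rectangle of $\widetilde\p$ outside the advertised exceptional family, and $M_G(x)=M_G(y)$ for those in it. The fact that the exceptional rectangles form exactly the stated family (thin, in the row or column of $O_1$, missing $O_1$ and every $X$) then falls out of the same case analysis, since any other rectangle either produces an $\O_G$-decoration after crushing or genuinely moves a dot off a non-singular line.

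Two bookkeeping points need care. First, dots may \emph{merge} under crushing (when $x$ has dots on both a singular line and a line that collapses onto it), so $\widetilde x$ is a multiset and one must use the convention — implicit in the figure and in Appendix A of \cite{These} — that $M_{\O_G}$ is evaluated with multiplicity; I would note that the combinatorial identity $M_B(A)=\J(A,A)-2\J(A,B)+\J(B,B)+1$ extends verbatim to multisets and that Lemma $2.5$ of \cite{MOST} survives this extension because its proof is purely a count of intersection quadrants. Second, the degree shift $[-\#(I)]$ in the definition of $CV^-(G^s)$ versus $CV^-(G)$ must be tracked so that "$M_G$ decreases" is the correct statement on $CV^-(G^s)$; here $G$ and $G^s$ have the same singular RoCs (stabilization involves no singular line, by the standing hypothesis that $O_1$ lies in no singular RoC), so the desing grading and its shift are identical on both sides and contribute nothing. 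The genuinely delicate step — the main obstacle — is the verification that crushing a decoration-free rectangle on $G^s$ cannot create a stray $\O_G$-decoration in its image \emph{except} when the rectangle lives in the row or column of $O_1$; this is a finite case check on the possible positions of $\rho$ relative to the two collapsing singular lines and the cells adjacent to $O_1$, and it is exactly the content of the worked examples in the figure above, which I would use as the template for the general argument.
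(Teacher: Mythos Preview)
Your overall strategy matches the paper's: crush the extra row and column, compare $M_{\O_G}$ before and after, and single out the thin rectangles as the grading-preserving ones. The identification of case (b) --- rectangles entirely contained in the crushed row or column, collapsing to a segment so that $\widetilde x=\widetilde y$ --- is correct and is exactly how the paper handles the $M_G$-preserving part.

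The gap is in case (a). You want to apply Lemma~2.5 of \cite{MOST} to the crushed rectangle $\widetilde\rho$ and read off $M_G(x)-M_G(y)=1-2\#(\widetilde\rho\cap\O_G)$. Two things go wrong. First, your ``main obstacle'' --- ruling out stray $\O_G$-decorations in $\widetilde\rho$ --- is not an obstacle at all: since $\rho$ is decoration-free on $G^s$ and crushing only deletes cells, $\widetilde\rho$ is automatically free of $\O_G$-decorations. So your case check would be vacuous, and your argument would output $M_G(x)-M_G(y)=1$ for every non-flat rectangle. Second, and this is the real issue, that conclusion is false: the paper exhibits cases where the drop is $2$, not $1$. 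The reason is that Lemma~2.5 of \cite{MOST} does \emph{not} survive the passage to multisets in the way you claim. The crushed configurations $\widetilde x,\widetilde y$ carry extra dots on the two singular grid lines, and those extra dots can land on $\p\widetilde\rho$. When that happens the quadrant count $\J(\widetilde x,\widetilde x)-\J(\widetilde y,\widetilde y)$ picks up an asymmetric contribution from the extra boundary dot relative to the moving corners, and the clean formula fails. The paper handles this by an explicit case analysis (extra dot on right border, extra dot on left border) and a separate complement argument for rectangles that are ripped after the cyclic permutation; in each case the drop is computed by hand to be $1$ or $2$. Your assertion that ``its proof is purely a count of intersection quadrants'' is true but does not help, because the count genuinely changes when a grid line carries two dots.

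So the plan is right but the delicate step is misidentified: the work is not in checking for hidden decorations (there are none) but in controlling the contribution of the doubled dots on the singular lines to the $\J$-counts, and in dealing with ripped crushed rectangles via their complements.
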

\begin{proof}
  Let $x$ and $y$ be two generators of $CV^-(G^s)$.
  
  In the crushing process, empty rectangles $\rho$ on $G^s$ containing no decoration give rise to empty rectangles $\widetilde{\rho}$ on $G$ which are also empty of decoration except, possibly, $X^*$.
  Actually, it may happen that a dot is pushed into $\p \rho$, but not into $\Int(\rho)$.
  If $\rho$ is connecting $x$ to $y$, then $\widetilde{\rho}$ is connecting $\widetilde{x}$ to $\widetilde{y}$.

  \saut
  
  If $\rho$ is totally flattened during the crushing process then $\widetilde{x}=\widetilde{y}$ and $M_G(x)=M_G(y)$.
  
  Now we assume that $\widetilde{\rho}$ is not flat.
  Since $X^*$ does not interfere with $M_G$, the fact that $\widetilde{\rho}$ contains it or not, does not matter.

  \vspace{-\baselineskip}

  {\parpic[r]{$\dessin{2.2cm}{CrushCase1}\ $}
    \vspace{.6cm}
    If $\widetilde{\rho}$ is not ripped and does not have any extra dot on its border, then the computation is identical than in the proof of Lemma $2.5$ of \cite{MOST}.
     
    Hence, we have $M_G(y)=M_G(x)-1$.

  }
  
  \vspace{.5\baselineskip}

  {\parpic[l]{$\ \dessin{2.2cm}{CrushCase2}$}
    If $\widetilde{\rho}$ is horizontally ripped in two then we can compute using its horizontal complement which is not ripped and corresponds to a rectangle connecting $y$ to $x$.
    Since $\widetilde{\rho}$ is empty and contains no decoration, its complement must contain $k$ $\O$--decorations and $k-1$ dots, where $k$ is the height of $\widetilde{\rho}$.
      
    Then, we obtain $M_G(y)=M_G(x)+1+2(k-1)-2k=M_G(x)-1$.
  
  }
  
%  \vspace{-\baselineskip}

  {\parpic[r]{$\dessin{2.2cm}{CrushCase3}\ $}
    \vspace{.5cm}
    If $\widetilde{\rho}$ has an extra dot on its right border, then there is an extra term in $\I(\widetilde{x},\widetilde{x})$ which does not appear in $\I(\widetilde{y},\widetilde{y})$.
    
    As a result, $M_G(y)=M_G(x)-2$.
    
  }
  
%  \vspace{-\baselineskip}

  {\parpic[l]{\ $\dessin{2.2cm}{CrushCase4}$}
    \vspace{.3cm}
    If $\widetilde{\rho}$ has an extra dot on its left border, we consider its horizontal complement $\widetilde{\rho}'$.
    Now, the extra term appears in $\I(\widetilde{y},\widetilde{y})$ but, on the other hand, $\widetilde{\rho}$ contains only $k-2$ dots in its interior.
       
    Finally, $M_G(y)=M_G(x)+2+2(k-2)-2k=M_G(x)-2$.

  }
  
  \saut

  Vertical ripping can be treated in the same way.
  Because of $O_1$, a rectangle cannot be ripped in four pieces.

  \saut
  
  Hence the grading $M_G$ induces a filtration on $\big(CV^-(G^s),\widetilde{\p}\big)$.
  Furthermore, it is clear that $M_G$ is only preserved by rectangles which are flattened during the crushing process.
  It corresponds exactly to rectangles contained in the row or in the column through $O_1$.
\end{proof}

\subsubsection{Rotation}
\label{sssec:Rotation}

Let $L$ be a singular link enhanced with an orientation for all its double points.
Let $G_h$ and $G_v$ be two grid diagrams for $L$ which differ from the following rotation move:
$$
\begin{array}{ccc}
  \dessin{2.2cm}{RotD1} & \ \longleftrightarrow \  & \dessin{2.2cm}{RotD2}.\\[1.3cm]
  G_h && G_v
\end{array}
$$
Furthermore, according to the construction of a set of peaks given in Section \ref{ssec:Homology_Statement}, we assume that the orientation of the double point involved in this move corresponds to the orientation inherited from the plane on which $G_h$ and $G_v$ are drawn.

We consider the four desingularizations of the involved RoCs.
All of them can be obtained by removing all winding arcs but two in the following grid $G$ (see (\ref{eq:RotInvDiag})):

\begin{eqnarray}
  \label{eq:Total_Grid}
  \dessin{6cm}{TotalGrid}.
\end{eqnarray}

We denote by $G_{X_1X_2}$ where $(X_1,X_2)\in\{\alpha_1,\beta_1,\gamma_1\}\times\{\alpha_2,\beta_2,\gamma_2\}$ the grid obtained by removing $\disp{\bigcup_{i=1}^2}\left(\{\alpha_i,\beta_i,\gamma_i\}\setminus X_i\right)$ in $G$.
According to Corollary \ref{cor:Mapping_Cone}, the chain complexes $CV^-(G_h)$ and $CV^-(G_v)$ can be seen as the mapping cones of
$$
\func{f_{p_1}}{CV^-(G_{\alpha_1\beta_2})}{CV^-(G_{\beta_1\beta_2})}
$$
and
$$
\func{f_{p_2}}{CV^-(G_{\beta_1\alpha_2})}{CV^-(G_{\beta_1\beta_2})}
$$
where $p_1$ and $p_2$ are, respectively, the element of $\alpha_1\cup\beta_1$ of type $\dessin{.4cm}{Cstyle}$ and the element of $\alpha_2\cup\beta_2$ of type $\dessinH{.4cm}{rotCstyle}$ (see (\ref{eq:Total_Grid})).

The positive desingularizations of $G_h$ and $G_v$ can be linked by a sequence of two regular commutations.
In Section \ref{sssec:Commutation}, we have already defined quasi-isomorphisms
$$
\func{\phi_{\beta_2\gamma_2}}{CV^-(G_{\alpha_1\beta_2})}{CV^-(G_{\alpha_1\gamma_2})},
$$
$$
\func{\phi_{\gamma_2\beta_2}}{CV^-(G_{\alpha_1\gamma_2})}{CV^-(G_{\alpha_1\beta_2})}
$$
\noi and 
$$
\func{\phi_{\gamma_1\beta_1}}{CV^-(G_{\gamma_1\alpha_2})}{CV^-(G_{\beta_1\alpha_2})}.
$$

The following diagram synthesizes all the grids and the chain maps:
\begin{eqnarray}
  \xymatrix@!0@C=3.5cm@R=3.5cm{
    \dessin{2cm}{MIT1} \ar@<-.1cm>[r]_{\phi_{\beta_2\gamma_2}} \ar[d]_{f_{p_1}} \ar@/_.55cm/@{-->}[drrr]^\phi & \dessin{2cm}{MIT2} \ar@<-.1cm>[l]_{\phi_{\gamma_2\beta_2}} \ar@{-->}[r]^\psi & \dessin{2cm}{MIT3} \ar[r]^{\phi_{\gamma_1\beta_1}} & \dessin{2cm}{MIT4} \ar[d]_{f_{p_2}}\\
    \dessin{2cm}{MIT5} \ar[rrr]^\sim_{\Id} &&& \dessin{2cm}{MIT5}}.
  \label{eq:RotInvDiag}
\end{eqnarray}

Now we define two maps
\begin{gather*}
  \func{\psi}{CV^-(G_{\alpha_1\gamma_2})}{CV^-(G_{\gamma_1\alpha_2})},\\
  \func{\phi}{CV^-(G_{\alpha_1\beta_2})}{CV^-(G_{\beta_1\beta_2})}.
\end{gather*}
The map $\psi$ send a generator $x\in CV^-(G_{\alpha_1\gamma_2})$ to the unique generator $y\in CV^-(G_{\gamma_1\alpha_2})$ connected to $x$ by a pair of disjoint grid triangles which are empty and contain no decoration.
When $\alpha_1\cap\gamma_2\in x$, the triangles are degenerated and $x=y$ as sets of dots on $G$.

\begin{figure}[h]
  $$
  \begin{array}{ccc}
    \dessin{4cm}{Psi1} & \hspace{1cm} & \dessin{4cm}{Psi2} \\[2cm]
    \textrm{when }\alpha_1\cap\gamma_2\in x &&  \textrm{when }\alpha_1\cap\gamma_2\notin x
  \end{array}
  $$
  \caption*{A picture for $\psi$: {\footnotesize the generators $x$ and $y$ coincide except concerning the dark dots which belong to $x$ and the hollow ones which belong to $y$. Grid triangles are depicted by shading.}}
  \label{fig:Psi}
\end{figure}

For the definition of $\phi$, we need to introduce a new kind of polygons.
For any generators $x$ and $y$ in $CV^-(G_{\alpha_1\beta_2})$, a polygon $\Pi=\overline{\pi \setminus B}$ is a \emph{$\beta\gamma\beta$-polygon} connecting $x$ to $y$ if
\begin{itemize}
\item[-] $\pi$ is a commuting polygon in $\PolC (G_{\alpha_1\beta_2})$ connecting $x$ to $y$;
\item[-] $B$ is one of the two bigons delimited by the arcs $\beta_2$ and $\gamma_2$;
\item[-] $B\subset\pi$.
\end{itemize}
The $\beta\gamma\beta$-polygon $\Pi$ is \emph{empty} if $\Int (\Pi)\cap x=\emptyset$.
We denote by $\beta\gamma\beta$-$\Pol (x,y)$ the set of all empty $\beta\gamma\beta$-polygons, with at least six corners, connecting $x$ to $y$.

\begin{figure}[ht]
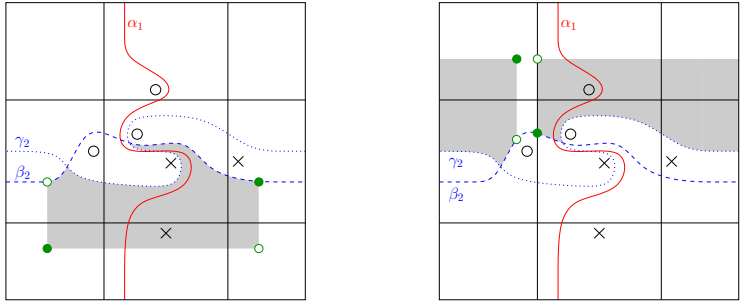

  $$
  \begin{array}{ccc}
    \dessin{4cm}{abaPol1} & \hspace{1cm} & \dessin{4cm}{abaPol2}
  \end{array}
  $$ 
  \caption*{Examples of $\beta\gamma\beta$-polygons connecting $x$ to $y$: {\footnotesize dark dots describe the generator $x$ while hollow ones describe $y$. Grid polygons are depicted by shading.}}
  \label{fig:bgbPolygons}
\end{figure}

\saut

Now, we set the map $\func{\varphi_1}{CV^-(G_{\alpha_1\beta_2})}{CV^-(G_{\alpha_1\beta_2})}$ as the morphism of $\Z[U_{O_1},\cdots,U_{O_{n}}]$--modules defined on the generators by
$$
\varphi_1(x) = \sum_{\substack{y \textrm{ generator}\textcolor{white}{R}\\ \textrm{of }CV^-(G_{\alpha_1\beta_2})}\textcolor{white}{I}} \sum_{\Pi \in \beta\gamma\beta\textrm{-}\Pol (x,y)} \e(\pi)U_{O_1}^{O_1(\Pi)}\cdots U_{O_{n}}^{O_{n}(\Pi)}\cdot y
$$
\noi where $\pi$ is the element of $\Pol (G_{\alpha_1\beta_2})$ such that $\Pi=\overline{\pi \setminus B}$ for a bigon $B$.

\saut

We also define $\func{\varphi_2}{CV^-(G_{\alpha_1\gamma_2})}{CV^-(G_{\beta_1\beta_2})}$ by
$$
\varphi_2(x) = \sum_{\substack{y \textrm{ generator}\textcolor{white}{R}\\ \textrm{of }CV^-(G_{\beta_1\beta_2})}\textcolor{white}{I}} \sum_{\pi \in \Pol (x,y)} \e(\pi)U_{O_1}^{O_1(\pi)}\cdots U_{O_{n}}^{O_{n}(\pi)}\cdot y
$$
\noi where $x$ is a generator of $CV^-(G_{\alpha_1\gamma_2})$ and $\Pol (x,y)$ is the set of empty grid polygons containing $p_1$ and $q_2$, the leftmost element of $\beta_2\cap\gamma_2$ (see \ref{eq:Total_Grid}), as peaks and connecting $x$ to $y$.

\saut

Then we set $\phi=f_{p_1}\circ \varphi_1+\varphi_2\circ\phi_{\beta_2\gamma_2}$.
Since $CV^-(G_v)$ and $CV^-(G_h)$ can, respectively, be seen as the mapping cones of $f_{p_1}$ and $f_{p_2}$, we finally define $\func{\Phi}{CV^-(G_v)}{CV^-(G_h)}$ by
$$
\Phi(x)=\left\{
  \begin{array}{ll}
    \phi_{\gamma_1\beta_1}\circ\psi\circ\phi_{\beta_2\gamma_2}(x) + \phi(x) & \textrm{if }x\textrm{ is a generator of }CV^-(G_{\alpha_1\beta_2})\\
    x & \textrm{if }x\textrm{ is a generator of }CV^-(G_{\beta_1\beta_2})
  \end{array}
\right..
$$

\begin{lemme}
  \label{lem:Phi_Quasi_Isomorphism}
  The map $\Phi$ is a quasi-isomorphism.
\end{lemme}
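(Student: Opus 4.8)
The plan is to regard $\Phi$ as a morphism between the two mapping cones $CV^-(G_v)=\mathrm{Cone}(f_{p_1})$ and $CV^-(G_h)=\mathrm{Cone}(f_{p_2})$ and to deduce the conclusion from the five lemma, the only real work being the verification that $\Phi$ is a chain map. With respect to the decompositions $CV^-(G_v)=CV^-(G_{\alpha_1\beta_2})\oplus CV^-(G_{\beta_1\beta_2})[-1]$ and $CV^-(G_h)=CV^-(G_{\beta_1\alpha_2})\oplus CV^-(G_{\beta_1\beta_2})[-1]$, the map $\Phi$ is lower triangular with diagonal entries $g:=\phi_{\gamma_1\beta_1}\circ\psi\circ\phi_{\beta_2\gamma_2}$ and $\mathrm{Id}$, and off-diagonal entry $\phi$. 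Expanding the identity $\Phi\circ\p^-_{G_v}=\p^-_{G_h}\circ\Phi$ with the mapping-cone differentials then splits into three statements: (a) $g$ anticommutes with the differentials; (b) the identity of $CV^-(G_{\beta_1\beta_2})$ does so (trivial); and (c) $\phi$ is a null-homotopy of $f_{p_1}-f_{p_2}\circ g$ for the mapping-cone sign conventions, i.e. $f_{p_1}-f_{p_2}\circ g=\p^-_{G_{\beta_1\beta_2}}\circ\phi+\phi\circ\p^-_{G_{\alpha_1\beta_2}}$ up to sign.

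Statement (a) reduces, via Lemma \ref{lem:Commutation_Isomorphism} for $\phi_{\beta_2\gamma_2}$ and $\phi_{\gamma_1\beta_1}$, to showing that $\psi$ is a chain map; I would prove this exactly as in Proposition \ref{prop:Chain_Complex}, by checking that $\psi\circ\p^--\p^-\circ\psi$ counts concatenations of a grid polygon with the pair of empty decoration-free triangles defining $\psi$ and that these concatenations cancel in pairs through an alternative decomposition, the computation being transported to $G_{0\cdots0}$ by $\phi$ and reduced to \cite{MOST}. Statement (c) is the main obstacle. Unwinding $\phi=f_{p_1}\circ\varphi_1+\varphi_2\circ\phi_{\beta_2\gamma_2}$, one writes both sides of the desired identity as signed counts of concatenations of grid polygons, $\beta\gamma\beta$-polygons, commuting polygons and triangle pairs; the degenerate configurations --- those in which the bigon bounded by $\beta_2$ and $\gamma_2$ has been entirely absorbed, or the triangles of $\psi$ have collapsed --- reproduce precisely the terms $f_{p_1}$ and $f_{p_2}\circ g$, while the non-degenerate ones pair off under the involution switching the realized element of $\beta_2\cap\gamma_2$ used as a peak, in the same spirit as the proof that commutation maps intertwine the differentials in Section $3.1$ of \cite{MOST}. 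The genuinely new cases, where a peak $p_1$ or $p_2$ lies on the arc shared by two pieces of a concatenation, are handled as in Corollary \ref{cor:Mapping_Cone}. That $\Phi$ respects the gradings then follows from Lemma \ref{lem:Grading_Respect} together with the shifts built into the construction.

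Granting that $\Phi$ is a chain map, quasi-isomorphism is formal. The cones sit in short exact sequences $0\to CV^-(G_{\beta_1\beta_2})[-1]\to CV^-(G_v)\to CV^-(G_{\alpha_1\beta_2})\to 0$ and $0\to CV^-(G_{\beta_1\beta_2})[-1]\to CV^-(G_h)\to CV^-(G_{\beta_1\alpha_2})\to 0$, and by construction $\Phi$ maps the first to the second, restricting to $\mathrm{Id}$ on the sub and to $g$ on the quotient. Applying the five lemma to the induced long exact sequences, $\Phi$ is a quasi-isomorphism as soon as $g$ is; and since $\phi_{\beta_2\gamma_2}$ and $\phi_{\gamma_1\beta_1}$ are quasi-isomorphisms by Lemma \ref{lem:Commutation_Isomorphism}, it remains only to see that $\psi$ is one. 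But $\psi$ is a bijection on generators, carries no $U$-powers and sign $+1$ (its triangles being decoration-free), and was just shown to be a chain map, so it is an isomorphism of $\Z[U_{O_1},\dots,U_{O_n}]$-chain complexes; alternatively, filtering by the desing grading identifies its associated graded map, on each summand $C^-(G_I)$, with a standard triangle/commutation isomorphism of \cite{MOST}. In either form $\psi$ is a quasi-isomorphism, and the proof is complete.
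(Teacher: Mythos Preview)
Your approach is the paper's approach: verify that $\Phi$ is a chain map by a polygon-cancellation argument, then deduce quasi-isomorphism from the fact that its diagonal pieces are quasi-isomorphisms. Your five-lemma argument is exactly the paper's filtration argument (by the $0/1$ resolution of the rotating RoC) in different language, and your remark that $\psi$ is a bijection on generators carried by decoration-free triangles is the right way to see that the diagonal piece $g$ is a quasi-isomorphism.

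The one place where your outline and the paper diverge is the organization of step~(c). You sketch a direct cancellation on concatenations, with ``degenerate configurations'' reproducing $f_{p_1}$ and $f_{p_2}\circ g$. The paper instead packages the argument around two prefabricated identities. First, the homotopy identity from \cite{MOST},
\[
\Id+\phi_{\gamma_2\beta_2}\circ\phi_{\beta_2\gamma_2}+\p^-_{G_{\alpha_1\beta_2}}\circ\varphi_1+\varphi_1\circ\p^-_{G_{\alpha_1\beta_2}}\equiv 0,
\]
which is precisely why $\varphi_1$ was defined via $\beta\gamma\beta$-polygons; composing with $f_{p_1}$ and using anticommutation produces the term $f_{p_1}$ algebraically, without any separate degenerate-polygon analysis. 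Second, a pictorial correspondence (the paper's Figure~\ref{fig:Correspondence}) identifying the polygons of $\phi_{\gamma_1\beta_1}\circ\psi$ with grid polygons having $p_1$ as a peak, and those of $f_{p_2}$ with grid polygons having $q_2$ as a peak; this reduces the remaining identity
\[
f_{p_2}\circ\phi_{\gamma_1\beta_1}\circ\psi+f_{p_1}\circ\phi_{\gamma_2\beta_2}+\p^-_{G_{\beta_1\beta_2}}\circ\varphi_2+\varphi_2\circ\p^-_{G_{\alpha_1\gamma_2}}\equiv 0
\]
to a $\p^2=0$--type cancellation of juxtaposed grid polygons with peaks in a fixed set, hence to Proposition~\ref{prop:Chain_Complex}. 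Your direct cancellation would in effect reprove both of these; the paper's route is shorter because it recycles them. In particular, your sentence about ``the bigon bounded by $\beta_2$ and $\gamma_2$ being entirely absorbed'' is doing the work of the MOST homotopy identity, and your sentence about ``the triangles of $\psi$ having collapsed'' is doing the work of the Figure~\ref{fig:Correspondence} correspondence --- so your sketch is correct, but it would benefit from making these two ingredients explicit rather than hiding them inside a single cancellation argument.
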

\begin{figure}[p]
  \centering
  \subfigure[]{\fbox{\xymatrix@R=.6cm@C=1.45cm{
        \dessin{5.5cm}{Cor1a} \ar@{^<-_>}[r]_(.48){\phi_{\gamma_1\beta_1}\circ \psi}^(.57){f'_{p_1}} & \dessin{5.5cm}{Cor1b}}}}\\
  \subfigure[]{\fbox{\xymatrix@R=.6cm@C=1.45cm{
        \dessin{5.5cm}{Cor2a} \ar@{^<-_>}[r]_(.48){\phi_{\gamma_1\beta_1}\circ \psi}^(.57){f'_{p_1}} & \dessin{5.5cm}{Cor2b}}}}\\
  \subfigure[]{\fbox{\xymatrix@R=.6cm@C=1.45cm{
        \dessin{5.5cm}{Cor3a} \ar@{^<-_>}[r]_(.48){\phi_{\gamma_1\beta_1}\circ \psi}^(.57){f'_{p_1}} & \dessin{5.5cm}{Cor3b}}}}
  \caption{Correspondence between grid polygons: {\footnotesize Dark dots describe the initial generator while hollow ones describe the final one. Squares describe intermediate states. Polygons are depicted by shading. The light gray one is considered first, then the dark one. For each polygon(s), we indicate to which map it belongs. Note that associated polygons share the same initial and final generators and the same sign rule. Moreover, they contain the same decorations. Other cases are similar.}}
  \label{fig:Correspondence}
\end{figure}
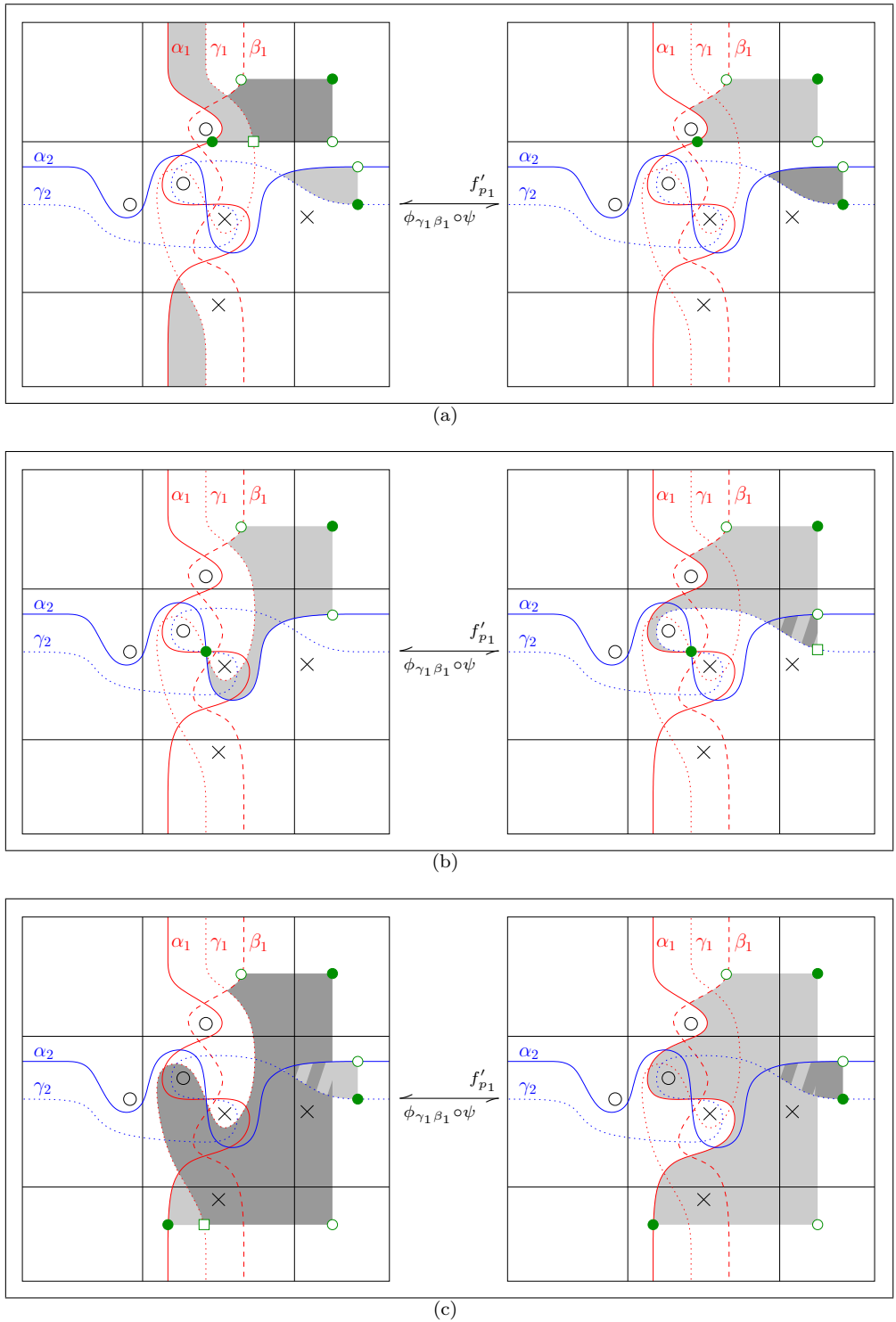

\addtocounter{subfigure}{3}

\begin{figure}[p]
  \vspace{1cm}
  \centering
  \subfigure[]{\fbox{\xymatrix@R=.6cm@C=1.45cm{
        \dessin{5.5cm}{CCor1a} \ar@{^<-_>}[r]_(.43){f_{p_2}}^(.55){\phi'_{\gamma_2\beta_2}} & \dessin{5.5cm}{CCor1b}}}}\\
  \subfigure[]{\fbox{\xymatrix@R=.6cm@C=1.45cm{
        \dessin{5.5cm}{CCor2a} \ar@{^<-_>}[r]_(.43){f_{p_2}}^(.55){\phi'_{\gamma_2\beta_2}} & \dessin{5.5cm}{CCor2b}}}}\\
  \subfigure[]{\fbox{\xymatrix@R=.6cm@C=1.45cm{
        \dessin{5.5cm}{CCor3a} \ar@{^<-_>}[r]_(.43){f_{p_2}}^(.55){\phi'_{\gamma_2\beta_2}} & \dessin{5.5cm}{CCor3b}}}}
  \caption*{\vspace{\baselineskip}}
\end{figure}
\begin{proof}
  To prove that $\Phi$ commutes with the differentials, it is sufficient to prove that
  \begin{eqnarray}
    \label{eq:Formule}
    f_{p_1}+\Phi\circ\p^-_{G_{\alpha_1\beta_2}} \equiv f_{p_2}\circ\phi_{\gamma_1\beta_1}\circ\psi\circ\phi_{\beta_2\gamma_2} + \p^-_{G_{\beta_1\beta_2}}\circ\Phi.
  \end{eqnarray}
  
  Let $x$ be a generator of $CV^-(G_{\alpha_1\gamma_2})$. 
  Figure \ref{fig:Correspondence} illustrates a correspondence between the polygons involved in $\phi_{\gamma_1\beta_1}\circ\psi$ (resp. $f_{p_2}$) and some polygons involving $p_1$ (resp. $q_2$) as a peak.
  A chain map $f'_{p_1}$ (resp. $\phi'_{\gamma_2\beta_2}$) can be defined by summing over the latter.
  Moreover, in this correspondence, associated polygons contain the same decorations.
  Then their combinatorics are identical and we can refer to the proof of Proposition \ref{prop:Chain_Complex} to claim that
  \begin{eqnarray}
    f_{p_2}\circ\phi_{\gamma_1\beta_1}\circ\psi(x) +  f_{p_1}\circ\phi_{\gamma_2\beta_2}(x)+\p^-_{G_{\beta_1\beta_2}}\circ\varphi_2(x) + \varphi_2\circ\p^-_{G_{\alpha_1\gamma_2}}(x)=0.
    \label{eq:Magie}
  \end{eqnarray}  
  Moreover, it follows from the proof of propositions 3.2 and 4.24 in \cite{MOST} that
  $$
  \Id + \phi_{\gamma_2\beta_2}\circ\phi_{\beta_2\gamma_2} + \p^-_{G_{\alpha_1\beta_2}}\circ\varphi_1 + \varphi_1\circ\p^-_{G_{\alpha_1\beta_2}}\equiv 0.
  $$
  Then
  \begin{eqnarray*}
    0 & \equiv & f_{p_1} + f_{p_1}\circ\phi_{\gamma_2\beta_2}\circ\phi_{\beta_2\gamma_2} + f_{p_1}\circ\p^-_{G_{\alpha_1\beta_2}}\circ\varphi_1 + f_{p_1}\circ\varphi_1\circ\p^-_{G_{\alpha_1\beta_2}}\\[.3cm]
    & \equiv & f_{p_1} + f_{p_1}\circ\phi_{\gamma_2\beta_2}\circ\phi_{\beta_2\gamma_2} - \p^-_{G_{\beta_1\beta_2}}\circ f_{p_1}\circ\varphi_1 + f_{p_1}\circ\varphi_1\circ\p^-_{G_{\alpha_1\beta_2}},\\
  \end{eqnarray*}
  \noi and using (\ref{eq:Magie}), we obtain
  \begin{eqnarray*}
    0 & \equiv & f_{p_1} - f_{p_2}\circ\phi_{\gamma_1\beta_1}\circ\psi\circ\phi_{\beta_2\gamma_2} - \p^-_{G_{\beta_1\beta_2}}\circ\varphi_2\circ\phi_{\beta_2\gamma_2} - \varphi_2\circ\p^-_{G_{\alpha_1\gamma_2}}\circ\phi_{\beta_2\gamma_2}\\
    && \hspace{6cm} - \p^-_{G_{\beta_1\beta_2}}\circ f_{p_1}\circ\varphi_1 + f_{p_1}\circ\varphi_1\circ\p^-_{G_{\alpha_1\beta_2}}\\[.3cm]
    & \equiv & f_{p_1} + f_{p_1}\circ\varphi_1\circ\p^-_{G_{\alpha_1\beta_2}} + \varphi_2\circ\phi_{\beta_2\gamma_2}\circ\p^-_{G_{\alpha_1\beta_2}}  - f_{p_2}\circ\phi_{\gamma_1\beta_1}\circ\psi\circ\phi_{\beta_2\gamma_2}\\
    && \hspace{6cm}  - \p^-_{G_{\beta_1\beta_2}}\circ\varphi_2\circ\phi_{\beta_2\gamma_2} - \p^-_{G_{\beta_1\beta_2}}\circ f_{p_1}\circ\varphi_1,
  \end{eqnarray*}
  what concludes the proof of (\ref{eq:Formule}) since the remaining terms cancel each other out.

  \saut
  
  Now we consider the filtration induced by the grading which sends a generator $x$ of $CV^-(G_v)$ (resp. $CV^-(G_h)$) to $0$ or $1$ depending on the resolution of the singular column (resp. singular row) involved in the rotation move.
  The graded part associated to $\Phi$ is a composition of quasi-isomorphisms.
  The whole map $\Phi$ is hence a quasi-isomorphism.
\end{proof}

\begin{figure}[t]
  $$
  \xymatrix{
    \dessin{2.5cm}{4Rot1a} \ar@{^<-_>}[r]_(.42){\dessin{.3cm}{Cstyle}}^(.57){\dessinH{.3cm}{rotCstyle}} & \dessin{2.5cm}{4Rot1b} & & \dessin{2.5cm}{4Rot2a} \ar@{^<-_>}[r]_(.42){\dessin{.3cm}{Cstyle}}^(.57){\dessinH{.3cm}{rotCpstyle}} & \dessin{2.5cm}{4Rot2b}\\
    \dessin{2.5cm}{4Rot3a} \ar@{^<-_>}[r]_(.42){\dessin{.3cm}{Cpstyle}}^(.57){\dessinH{.3cm}{rotCpstyle}} & \dessin{2.5cm}{4Rot3b} & &\dessin{2.5cm}{4Rot4a} \ar@{^<-_>}[r]_(.42){\dessin{.3cm}{Cpstyle}}^(.57){\dessinH{.3cm}{rotCstyle}} & \dessin{2.5cm}{4Rot4b}
  }
  $$
  \caption{Four rotation moves: {\footnotesize For each move, we indicate the convention for arcs intersection which is considered in the proof of invariance.}}
  \label{fig:FourRotations}
\end{figure}
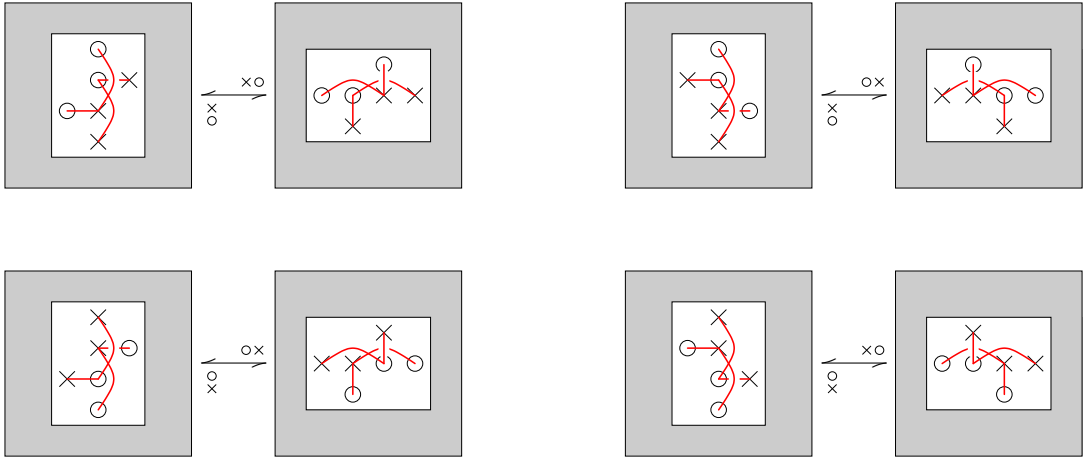

By symmetry, the proof can be adapted to fit the four rotation moves given in Figure \ref{fig:FourRotations}.
They correspond, up to equivalence, to the two rotation moves for the two choices of arcs intersection.
It is straighforwardly checked that they simultaneously change or preserve, in one hand, the choice of arcs intersections considered in the proof and, on the other hand, the orientation induced by the planar diagrams for the double point involved in the rotation move .

%%% Local Variables: 
%%% mode: latex
%%% TeX-master: "Singular"
%%% End: 

% LocalWords:  subcomplexes

%%% Local Variables: 
%%% mode: latex
%%% TeX-master: "Singular"
%%% End: 

\saut
% A few properties
\section{Algebraic properties}
\label{sec:Properties}

In this section, we gather a few properties.

% Graded homologies
\subsection{Graded homologies}
\label{ssec:Properties_GradedHomologies}

For any grid $G$, the chain complex $CV^-(G)$ is enhanced with several filtrations.
Now we consider the associated graded objects \ie the differentials obtained by removing terms which increase or decrease the gradings.

The graded differential associated to the Alexander grading, denoted by removing the minus exponent, corresponds to the sum over the grid polygons which do not contain any $\X$--decoration.
The associated homology is also denoted by removing the minus exponent.

For any $\O$--decoration $O$, there is a filtration induced by the polynomial degree in $U_O$.
The associated graded differential counts grid polygons which do not contain $O$.
It corresponds also to sending the variable $U_O$ to zero.
The differential and the homology obtained by sending all the variables to zero are denoted by adding a tilde.

In case of a link $L$, the set of $\O$--decorations can be partitioned according to the component of $L$, the decorations belong to.
Since $\O$--decorations induce a one-to-one correspondence between the rows and the columns of any desingularization of $G$, $\O$-decorations can be seen as a permutation.
The partition defined above corresponds to the decomposition of this permutation into disjoint cycles.
The homology obtained by sending one variable in each equivalence class to zero is denoted by adding a hat.
The following proposition proves that it does not depend on the choice of the representatives.

\begin{prop}
  \label{prop:Graded_Homologies}
  Let $G$ be a grid diagram of size $n$ for a singular link $L$ with $\ell$ components.
  The graded homologies $HV(G)$ and $\widehat{HV}(G)$ depend only on $L$ and on an orientation for its double points.
  Furthermore $\widetilde{HV}(G)\equiv\widehat{HV}(G)\otimes V^{\otimes(n-\ell)}$, where $V$ is a free bigraded $\Z$--module generated by two elements of bidegrees $(0,0)$ and $(-1,-1)$.
\end{prop}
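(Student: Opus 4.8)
The plan is to prove, in turn, the invariance of $HV$, then the invariance together with the representative-independence of $\widehat{HV}$, and finally the tensor-product formula, which simultaneously accounts for $\widetilde{HV}$. For the first point, I would revisit the six families of maps produced in Section \ref{ssec:Homology_Consistency} (change of compass convention, isotopy of winding arcs, regular cyclic permutation, regular commutation, (de)stabilization, rotation). Each of them is $\Z[U_{O_1},\cdots,U_{O_n}]$-linear and, by Lemma \ref{lem:Grading_Respect}, respects the Alexander filtration; therefore each descends to the Alexander-graded complex $\big(CV(G),\p_G\big)$, whose differential counts grid polygons containing no $\X$-decoration. On each summand $C^-(G_I)$ the descended maps specialize to the corresponding maps (or homotopies) of \cite{MOST} for the regular grid $G_I$, which are quasi-isomorphisms on that associated graded; filtering by the desing grading exactly as in the proof of Lemma \ref{lem:Commutation_Isomorphism} and invoking standard homological algebra, the total descended maps are again quasi-isomorphisms. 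Since by Theorem \ref{theo:Elementary_Moves} any two grids for $L$ are connected by these moves, $HV(G)$ depends only on $L$ and on the orientation of its double points.

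The same reasoning yields invariance of $\widehat{HV}$ under the six moves: after setting to zero the variables $\{U_O\}_{O\in\S}$ attached to a choice $\S$ of one $\O$-decoration per component, all the maps and homotopies of Section \ref{ssec:Homology_Consistency}, being module morphisms, still descend and remain quasi-isomorphisms. What is left is the independence of $\widehat{HV}(G)$ of the choice of $\S$, which I would deduce from the assertion that, whenever $O$ and $O'$ are two $\O$-decorations on the same component of $L$, the multiplications by $U_O$ and by $U_{O'}$ are chain homotopic endomorphisms of $\big(CV^-(G),\p^-_G\big)$. By transitivity along the component it suffices to treat the case where $O$ and $O'$ lie, respectively, in the row and in the column of one common $\X$-decoration $X$, chosen in a regular RoC whenever possible. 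Let $\mathcal H_X\colon CV^-(G)\to CV^-(G)$ be the $\Z[U_{O_1},\cdots,U_{O_n}]$-linear map defined on generators by counting, with the $U$-weights and the sign rule of $\p^-_G$, the empty grid polygons that contain $X$. Then
$$
\p^-_G\circ\mathcal H_X+\mathcal H_X\circ\p^-_G=U_O-U_{O'}.
$$
This is established in the same way as Proposition \ref{prop:Chain_Complex}: the juxtapositions of two grid polygons, at least one of which passes through $X$, are gathered in canceling pairs---those sharing a peak cancel through the compass-type flip used there, the others are transported to the regular grid $G_{0\cdots 0}$ where the computation of \cite{MOST} isolates the degenerate contributions $U_O-U_{O'}$. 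Since $\mathcal H_X$ is module-linear it persists in every quotient $CV^-(G)/(\{U_O\}_{O\in\S})$, so $\widehat{HV}(G)$ is independent of $\S$ and hence a well-defined invariant of $L$ with oriented double points.

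For the tensor-product formula, fix representatives $\S=\{O_1,\dots,O_\ell\}$ and set $\widehat{CV}(G)=CV^-(G)/(U_{O_1},\dots,U_{O_\ell})$, which is a free $\Z[U_{O_{\ell+1}},\dots,U_{O_n}]$-module, so each $U_{O_j}$ with $j>\ell$ acts injectively on it. By the previous paragraph, on $\widehat{CV}(G)$ the endomorphism $U_{O_j}$ is chain homotopic to $U_O$ for the representative $O\in\S$ on the component of $O_j$, hence to $0$; the homotopy being module-linear, it survives any further quotient by a subset of the $U_{O_j}$. Killing $U_{O_{\ell+1}},\dots,U_{O_n}$ one after another, at each stage the variable being killed is an injective, null-homotopic endomorphism, so the quotient is isomorphic as a complex to the mapping cone of this endomorphism, which in turn is isomorphic to the previous complex tensored over $\Z$ with $V$, the two bidegrees $(0,0)$ and $(-1,-1)$ being read off the cone as in \cite{MOST}. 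After the $n-\ell$ steps one gets $\widetilde{HV}(G)\equiv\widehat{HV}(G)\otimes V^{\otimes(n-\ell)}$; note that this exhibits $\widetilde{HV}(G)$, which does depend on the grid size $n$, as being governed by the genuine invariant $\widehat{HV}$.

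The one genuinely new ingredient---and thus the main obstacle---is the construction of $\mathcal H_X$ and the proof that $\p^-_G\circ\mathcal H_X+\mathcal H_X\circ\p^-_G=U_O-U_{O'}$ in the singular setting: it is the analogue for grid polygons of the ``$U_{O_i}\simeq U_{O_j}$'' lemma of \cite{MOST}, and it demands a careful analysis of how juxtapositions of polygons through $X$ meet the peaks, in the spirit of Proposition \ref{prop:Chain_Complex}. Granting it, the first and third paragraphs reduce to the regular-case results of \cite{MOST} and to formal homological algebra applied to the structures set up in Section \ref{sec:Homology}.
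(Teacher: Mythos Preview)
Your proposal is correct and follows essentially the same route as the paper. The paper's own proof is extremely terse---it says only that the first statement is obtained by restricting the invariance proofs of Section~\ref{ssec:Homology_Consistency} to the grid polygons counted by the graded differentials, and that the tensor formula is a straightforward adaptation of Proposition~2.13 in \cite{MOST}---and your three paragraphs are precisely an unpacking of those two sentences: descending the six families of maps through the Alexander and $U_O$-filtrations, then building the singular analogue $\mathcal{H}_X$ of the $U_{O_i}\simeq U_{O_j}$ homotopy to get both the representative-independence of $\widehat{HV}$ and the iterated cone argument yielding $\widetilde{HV}\equiv\widehat{HV}\otimes V^{\otimes(n-\ell)}$. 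Your own final paragraph correctly identifies the one place where real work beyond \cite{MOST} is needed.
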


The first statement is obtained by restricting the proofs given in Section \ref{ssec:Homology_Consistency} to the considered grid polygons.

The second is a straighforward adaptation of Proposition $2.13$ in \cite{MOST}.

\saut

\begin{defi}
  For any link $L$ with oriented double points, the homology $\widehat{HV}(L)$ is called the singular link Floer homology of $L$.
  It is also denoted by $\widehat{HFV}(L)$.
\end{defi}

%%% Local Variables: 
%%% mode: latex
%%% TeX-master: "Singular"
%%% End: 

% Symmetries
\subsection{Symmetries}
\label{ssec:Properties_Symmetries}

Some Alexander polynomial properties have their counterpart in link Floer homology.

\saut

Let $L$ be an oriented link with $\ell$ components and $k$ oriented double points.
We denote by
\begin{itemize}
\item[-] $L^!$ its mirror image with reversed orientations for the double points;
\item[-] $-L$ the link obtained by reversing the orientation of $L$;
\item[-] $L^\#$ the link obtained by reversing the orientation of the double points of $L$. 
\end{itemize}

\begin{prop}\label{prop:Symetries}
  We denote by $\widehat{HFV}_j^\circ{}^i$ the Maslov $i^\textrm{th}$ and Alexander $j^\textrm{th}$ group of cohomology associated to $\widehat{HFV}$.
  Then
  \begin{itemize}
  \item[-] $\widehat{HFV}(-L)\simeq  \widehat{HFV}(L)$;
  \item[-] $\forall i,j\in\N,\widehat{HFV}_i^j(L^\#)\simeq  \widehat{HFV}_{i-2j}^{-j}(L)$;
  \item[-] $\forall i,j\in\N,\widehat{HFV}_i^j(L^!)\simeq\widehat{HFV}_{-j}^{\circ}{}^{\hspace{-.2cm}-i+1-\ell+k}(L)$.
  \end{itemize}
\end{prop}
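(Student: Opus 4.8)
The plan is to establish each symmetry by exhibiting an explicit bijection on the level of grid diagrams and then tracking its effect on the three gradings and on the differential, thereby upgrading the corresponding statements for the regular link Floer homology (Proposition $2.13$ and the symmetry discussion in \cite{MOST}) to the singular setting. For the first isomorphism, $\widehat{HFV}(-L)\simeq\widehat{HFV}(L)$, I would observe that reversing the orientation of $L$ amounts to swapping the roles of the $\O$-- and $\X$--decorations simultaneously with reversing the orientation of each double point; on a grid this is realized by the symmetry that reflects a grid through a diagonal (or, equivalently, rotates by $\pi$) while renaming decorations, and one checks directly from the formulas $M(\alpha.x)=M_\O(x)-2\deg(\alpha)$ and $A(\alpha.x)=\tfrac12(M_\O(x)-M_\X(x))-\tfrac{n-\ell}{2}-\deg(\alpha)$, together with the definition of the desing grading and the fact that $0$-- and $1$--resolutions are exchanged in a way compatible with the reversal, that all gradings are preserved. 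Since grid polygons are defined purely combinatorially on the torus, the bijection carries $\Pol(x,y,P)$ to $\Pol$ of the image generators, so the chain complexes are isomorphic.

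For the second isomorphism I would use the grid operation that swaps $\O$ and $\X$ \emph{without} any reflection; this is exactly the move that reverses the orientation of every double point, producing $L^\#$. Under this swap, $M_\O$ and $M_\X$ exchange, so $A$ changes sign (up to the shift $\tfrac{n-\ell}{2}$, which is symmetric) while the relation between $M$ and $A$ forces the Maslov degree to shift by $-2j$ when the Alexander degree is $j$; this is the content of the formula $\widehat{HFV}_i^j(L^\#)\simeq\widehat{HFV}_{i-2j}^{-j}(L)$. The behaviour of the gradings of grid polygons under swapping $\O$ and $\X$ — precisely the ``formulas for the behaviour of $M$ under $\dots$ swap of $\O$ and $\X$'' already invoked in the proof of Lemma \ref{lem:Grading_Respect} — must be written out here, and I would record them as $M(x)\mapsto M(x)+2A(x)$-type identities; combined with the compatibility of the peak-type conventions of types $\dessin{.4cm}{Cstyle}$, $\dessin{.4cm}{Cpstyle}$ under the swap (which is where the orientation of the double point enters), this gives the chain isomorphism.

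For the third and most delicate isomorphism, $\widehat{HFV}_i^j(L^!)\simeq\widehat{HFV}_{-j}^{\circ}{}^{-i+1-\ell+k}(L)$, the mirror image with reversed double-point orientations, I would pass to the dual complex: take the $\Z$--linear dual of $(CV^-(G),\p^-_G)$ with its gradings negated, identify it with the complex of a mirror grid (a reflection swaps under/over strands and realizes the mirror, while the orientation reversal of double points is needed to keep the peak conventions matched), and then chase the grading shifts. The Maslov shift $i\mapsto -i+1$ is the usual homological-dual shift present already in \cite{MOST} for links, and the Alexander shift $j\mapsto -j$ comes from the sign reversal; the extra term $-\ell+k$ is the new ingredient coming from the $\ell$ components and the $k$ singular RoCs, and I expect it to arise from carefully comparing the normalizing constant $\tfrac{n-\ell}{2}-$(desing shift) on $G$ with that on the mirror grid, where the sizes $n$ and the number of singular RoCs interact with the mapping-cone shifts $[-\#(I)]$ in the definition of $CV^-(G)$. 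The main obstacle will be exactly this bookkeeping: verifying that duality interacts correctly with the direct-sum-over-resolutions structure $CV^-(G)=\bigoplus_I C^-(G_I)[-\#(I)]$ and with Corollary \ref{cor:Mapping_Cone}, so that the cone decomposition is sent to a cone decomposition and the shift constants add up to $1-\ell+k$; I would handle it by induction on $k$ using the mapping-cone description, the base case $k=0$ being the known link result.
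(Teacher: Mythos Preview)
Your broad strategy matches the paper's, but two steps are off.

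For the first statement, the correct grid operation is reflection along the diagonal $y=x$ \emph{alone}: this swaps rows with columns, hence horizontal with vertical strands, reversing the link orientation while preserving the south-west/north-east relations that define $M_\O$, $M_\X$ and $A$; it also sends $0$--resolutions to $0$--resolutions. Your proposed move, reflecting \emph{and} renaming decorations, composes two orientation reversals and returns $L$ (with altered peak conventions), not $-L$; and the parenthetical ``(or, equivalently, rotates by $\pi$)'' is false, since a $\pi$--rotation is orientation-preserving on the plane and leaves the underlying link unchanged. Moreover, once you swap $\O$ and $\X$, $M_\O$ becomes $M_\X$, so gradings are \emph{not} preserved as you assert. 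Relatedly, in the second statement, swapping $\O$ and $\X$ on a grid for $L$ yields a grid for $-L$ with flipped peak convention, hence computes $\widehat{HFV}\big((-L)^\#\big)$, not $\widehat{HFV}(L^\#)$ directly; one must invoke the first statement (and the paper in fact performs the swap on $-G$ for this reason).

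More structurally, you are missing the device that removes the grid size $n$ from the grading shifts in the second and third statements. Swapping $\O$ and $\X$ yields raw identities $M'-2A'=M+n-\ell$ and $-A'=A+n-\ell$; rotating $-G$ by $90^\circ$ for the mirror yields $M+M^!=1-n+k$ and $A+A^!=\ell-n$. All of these involve $n$ and give isomorphisms only at the $\widetilde{HFV}$ level. To descend to $\widehat{HFV}$ with the clean shifts stated, the paper uses Proposition~\ref{prop:Graded_Homologies}: write $\widetilde{HFV}\simeq\widehat{HFV}\otimes V^{\otimes(n-\ell)}$ on both sides and identify $\widehat{HFV}$ with the top summand $\widehat{HFV}\otimes v_+$ on one side and with the bottom summand $\widehat{HFV}\otimes v_-$ on the other, absorbing exactly the $(n-\ell,n-\ell)$ discrepancy in bidegree. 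Your plan has no analogue of this step. In particular, the induction on $k$ you propose for the mirror statement will not by itself produce the constant $1-\ell+k$: the $k$ already appears in the direct computation $M+M^!=1-n+k$ on the full complex (coming from the shifts $[-\#(I)]$ in $CV^-(G)$), while the passage $n\rightsquigarrow\ell$ is the $V^{\otimes(n-\ell)}$ trick, not the cone structure. The paper's argument for the mirror is a direct $90^\circ$--rotation and identification with the codifferential on the dual basis, with no induction.
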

\begin{proof}
  Let $G$ be a grid diagram of size $n$ for $L$.

  \saut
  
  Flipping $G$ along the line $y=x$ gives a grid $-G$ for $-L$.
  This operation induces a bijection $\psi$ between the generators of $CV^-(G)$ and those of $CV^-(-G)$.
  Since it sends the $0$--resolution (resp. $1$--resolution) of a singular RoC to the $0$--resolution (resp. $1$--resolution) of its image; and since it does not affect the relative positions of dots and decorations, this bijection preserves the Maslov and Alexander gradings.
  Hence, after having substituted $-U_{O_i}$ to $U_{O_i}$ in $CV^-(-G)$ for all $i\in\llbracket 1,n\rrbracket$ and because of the antisymmetry of the convention used for the choice of the orientation of the double points, $\psi$ commutes with the differentials.

  This proves the first statement.

  \saut
  
  The second is obtained by switching the role of $\O$ and $\X$ in $-G$.
  The new grid, denoted by $G'$, describe $L$.
  We denote by $M$, $A$, $M'$ and $A'$ the Maslov and Alexander gradings associated to, respectively, $-G$ and $G'$.
  Because of their definitions and since we have switched the $\O$ and the $\X$--decorations, one can check that, for a given generator $x$,
  $$
  M'(x)-2A'(x)=M(x) + n -\ell,
  $$
  $$
  -A'(x)=A(x) + n -\ell.
  $$
  Moreover, since they count only grid polygons which contain no decoration, the differentials $\widetilde{\p}_{-G}$ and $\widetilde{\p}_{G'}$ clearly coincide.
  But switching the decorations also switchs the conventions for the orientations of double points.
  Then, according to the Proposition \ref{prop:Graded_Homologies}, for all integers $i$ and $j$, we have defined an isomorphism
  
  \begin{eqnarray}
    \left(\widehat{HFV}(L^\#)\otimes V^{\otimes (n-\ell)}\right)_i^j \simeq \left(\widehat{HFV}(-L)\otimes V^{\otimes (n-\ell)}\right)_{i-2j-n+\ell}^{-j - n +\ell}
    \label{eq:Orient}
  \end{eqnarray}
  
  Now, we denote by $v_+$ and $v_-$ the generators of, respectively, highest and lowest Alexander degree in $V^{\otimes (n-\ell)}$.
  Then, by identifying $\widehat{HFV}$ in $\widehat{HFV}\otimes V^{\otimes (n-\ell)}$ with $\widehat{HFV}\otimes v_+$ in the left-hand side of (\ref{eq:Orient}), and with $\widehat{HFV}\otimes v_-$ in its right-hand side, we obtain
  $$
  \widehat{HFV}_i^j(L^\#) \simeq \widehat{HFV}_{i-2j}^{-j}(-L).
  $$
  Finally, we use the first statement to conclude.

  \saut
  
  For the last statement, we rotate $-G$ ninety degree and get a grid diagram $G^!$ for $L^!$.
  Resolutions of singular RoCs are then swapped.
  We denote by $M^!$ and $A^!$ the Maslov and Alexander degrees associated to $G^!$.
  Since all dots and decorations are on distincts vertical and horizontal lines, one can check that, for a given generator $x$,
  $$
  M(x)+M^!(x)= 1 - n + k
  $$
  $$
  A(x) + A^!(x) = \ell - n.
  $$
  The differential induced on $\widetilde{C}(-G)$ by the rotation of $G^!$ counts the preimages of a generator under the differential given by $-G$.
  Then, after having substituted $-U_{O_i}$ to $U_{O_i}$ for all $i\in\llbracket 1,n\rrbracket$, it corresponds to the codifferential defined by $G$ on the dual basis of the usual generators.
  Moreover, the changes on double points orientations and on the convention for the choice of intersections of the winding arcs correspond.
  Finally, we obtain
  $$
  \left(\widehat{HFV}(L^!)\otimes V^{\otimes (n-\ell)}\right)_i^j \simeq \left(\widehat{HFV}^\circ(-L)\otimes V^{\otimes (n-\ell)}\right)_{-j-n+\ell}^{-i+1-n+k}.
  $$
  As above, it induces then an isomorphism
  $$
  \widehat{HFV}_i^j(L^!) \simeq \widehat{HFV}_{-j}^\circ{}^{-i+1-\ell+k}(L).
  $$
\end{proof}

%%% Local Variables: 
%%% mode: latex
%%% TeX-master: "Singular"
%%% End: 

% Acyclicity
\subsection{Acyclicity}
\label{ssec:Properties_Acyclicity}

Let $G$ be a grid presentation for a singular link $L$.
The addition of a singular loop to $L$ can be seen as replacing a regular column of $G$ with adjacent decorations by the following pattern:
$$
\dessin{2.1cm}{avantLoop} \ \leadsto \ \ \dessin{2.1cm}{apresLoop}.
$$
We denote by $S$ the singular column which is added with the loop, by $(\alpha,\beta)$ a set of winding arcs for $S$, by $G_S$ the grid hence obtained and by $L_S$ the link described by $G_S$.
Up to global reversing of the double points orientations, we can assume that the double point associated to $S$ is given the orientation induced by the plane on which $G_S$ is drawn.

Now, we consider the filtration on $\big(CV^-(G_S),\widehat{\p}\big)$ which counts the number of singular RoCs different from $S$ which are positively resolved and we will prove that the associated graded complex $\big(CV^-(G_S),\widehat{\p}^*\big)$ is acyclic.

\saut

For that purpose, we define a filtration by crushing the column and the rows which have appeared with the singular loop (compare Section \ref{sssec:Stabilization}).
If $x$ is a generator of $CV^-(G_s)$ drawn on $G_s$, then, during the crushing process, we push the dot which belongs to $\alpha$ or $\beta$ to the first vertical grid line on its left.
We obtain a set of dots $\widetilde{x}$ on $G$ such that exactly one horizontal and two adjacent vertical grid lines have more than one dots.
We permute cyclically the rows and the columns in such a way that the singular horizontal grid line is the bottommost and the two vertical ones the leftmost and the second rightmost.
Then, the upper right corner of the grid is filled with an $\O$--decoration, which we denote by $O^*$, and the bottom right one by a $\X$--decoration, which we denote by $X^*$.

For all generators $x$ of $CV^-(G_s)$, we define:
$$
M_G(x):= M_{\O_G}(\widetilde{x})
$$
where $\O_G$ is the set of $O$--decorations of $G$ and $M_{.}(\ .\ )$ is the map defined in the introduction, which compares the relative positions of two planar sets of points.

\begin{figure}[!h]
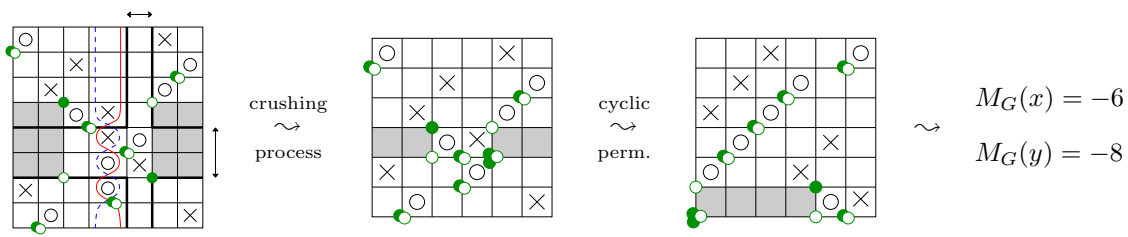

  $$
  \dessinH{3.2cm}{CrushLoop1}\ \begin{array}{c}\textrm{\scriptsize crushing}\\[-.1cm] \leadsto\\[-.1cm] \textrm{\scriptsize process} \end{array} \dessinH{3.2cm}{CrushLoop2} \begin{array}{c}\textrm{\scriptsize cyclic}\\[-.1cm] \leadsto\\[-.1cm] \textrm{\scriptsize perm.} \end{array} \dessinH{3.2cm}{CrushLoop3}  \leadsto \ \ \begin{array}{l} M_G(x)=-6 \\[.3cm] M_G(y)=-8 \end{array}
  $$
  \caption*{Crushing rows and column:  {\footnotesize dark dots describe the initial generator $x$ while hollow ones describe the final one $y$. Grid polygons are depicted by shading.}}
  \label{fig:CrushingLoop}
\end{figure}

\begin{lemme}
  \label{lem:Crushing_Filtration_2}
  The chain complex $\big(CV^-(G_S),\widetilde{\p}^*\big)$ is filtered by $M_G$.
  The associated graded differential, denoted by $\widetilde{\p}_\gr$, corresponds to the sum over grid polygons contained in the crushed RoCs and which do not contain any decoration.
\end{lemme}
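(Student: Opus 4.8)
The plan is to follow, step by step, the proof of Lemma~\ref{lem:Crushing_Filtration}, adapting it to the grid polygons counted by $\widetilde{\p}^*$ and to the fact that here one crushes a column together with two rows rather than a single row and a single column. Recall that the terms of $\widetilde{\p}^*$ come from empty grid polygons $\pi$ on $G_S$ that contain no decoration (this is forced by the Alexander and polynomial--degree filtrations) and carry at most the peak $p$ lying on the winding arcs $(\alpha,\beta)$ of $S$, peaks on the other singular RoCs being removed in the associated graded construction. The first step is to understand how such a $\pi$, connecting two generators $x$ and $y$, behaves under the crushing map: since the column $S$ (with its winding arcs) and the extra rows are exactly what gets flattened, the boundary $\p\pi$ --- which runs along grid lines and along $\alpha\cup\beta$, passing through $p$ --- is sent onto a union of grid lines of $G$. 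Hence $\pi$ gives rise to an empty rectangle $\widetilde{\pi}$ on $G$ connecting $\widetilde{x}$ to $\widetilde{y}$, possibly ripped by the torus identification, possibly carrying one extra dot on a side, and containing no $\O_G$--decoration apart, possibly, from the corner decoration $O^*$; as $O^*$ and $X^*$ lie in a corner of $G$ they shift $M_{\O_G}$ only by a global constant and may be disregarded, exactly as $X^*$ is in Lemma~\ref{lem:Crushing_Filtration}. If $\widetilde{\pi}$ is totally flattened, then $\widetilde{x}=\widetilde{y}$ and $M_G(x)=M_G(y)$.

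For $\widetilde{\pi}$ not flat, I would reproduce the case analysis of Lemma~\ref{lem:Crushing_Filtration}: distinguish whether $\widetilde{\pi}$ is ripped horizontally or vertically and, in each case, whether it carries an extra dot on its left or right (resp. bottom or top) border, computing in each situation by means of the horizontal (resp. vertical) complement of $\widetilde{\pi}$, which is forced to contain $k$ $\O_G$--decorations and $k-1$ or $k-2$ interior dots, $k$ being the height (resp. width) of $\widetilde{\pi}$. In every configuration one obtains $M_G(y)-M_G(x)\in\{-1,-2\}$, the ``$-2$'' outcomes being those in which an extra dot produces a term in one of $\J(\widetilde{x},\widetilde{x})$, $\J(\widetilde{y},\widetilde{y})$. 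The only genuinely new feature compared with Lemma~\ref{lem:Crushing_Filtration} is that two rows and one column are crushed at once; nevertheless $\widetilde{\pi}$ still cannot be ripped into four pieces, this being obstructed by $O^*$ (resp. $X^*$) in the role played there by $O_1$.

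Putting the cases together, $M_G$ is non--increasing along $\widetilde{\p}^*$, hence defines a filtration on $\big(CV^-(G_S),\widetilde{\p}^*\big)$; moreover it is preserved exactly by the polygons that the crushing flattens, that is, by the empty, decoration--free grid polygons supported inside the column and the rows created with the loop. Therefore the associated graded differential $\widetilde{\p}_\gr$ is precisely the sum over such polygons, which is the statement of the lemma.

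I expect the main obstacle to be the very first step: checking that, after crushing, $\p\widetilde{\pi}$ is indeed a legitimate rectangle on $G$ --- \ie that $p$ and the arc portions of $\p\pi$ collapse onto grid lines of $G$ (using that, as recalled in the discussion of Figure~\ref{fig:Pol->Rect}, a resolving arc meets each other grid line at most once) --- and that the decorations carried by the bigons bounded by $\alpha$ and $\beta$ either vanish with the crushed column or land in a corner like $O^*$ and $X^*$, so that they leave $M_{\O_G}$ unaffected. Once this bookkeeping is settled, the remaining sub--cases are the same elementary computations as in Lemma~\ref{lem:Crushing_Filtration}.
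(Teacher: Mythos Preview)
Your plan is the paper's plan, and most of your case analysis matches it. Two points, however, diverge from the paper and deserve correction.

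First, your handling of $O^*$ is not the paper's. In Lemma~\ref{lem:Crushing_Filtration} the decoration $X^*$ is disregarded \emph{not} because it sits in a corner, but because $M_G=M_{\O_G}$ involves only $\O$--decorations; here $O^*\in\O_G$, so the analogy you invoke fails. The paper's argument is different: it observes that the case analysis of Lemma~\ref{lem:Crushing_Filtration} always reduces to a \emph{non-ripped} rectangle (either $\widetilde\rho$ itself or its horizontal/vertical complement), and such a rectangle avoids the uppermost row and leftmost column, hence avoids $O^*$. The same reduction also ensures the non-ripped rectangle is not crossed by the singular grid lines of $\widetilde x$, so the dot--count inside it can be read off as in the earlier proof. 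Your ``global constant'' claim for $O^*$ may be salvageable, but it is not the reason $X^*$ was dismissed before, and you should either justify it or adopt the paper's avoidance argument. Relatedly, the obstruction to $\widetilde\rho$ being ripped in four pieces comes from the \emph{four decorations in the crushed RoCs of $G_S$} (a decoration-free $\rho$ cannot contain a full crushed row and a full crushed column), not from $O^*$ or $X^*$ on $G$.

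Second, and more substantively, you miss the paper's key observation about pentagons. The only grid polygons in $\widetilde\p^*$ with a peak are pentagons whose peak lies in $S$; the paper notes that, because of the two \emph{extremal} decorations of $S$, any such pentagon containing no decoration must lie entirely inside the two crushed rows. It is therefore flattened outright, giving $\widetilde x=\widetilde y$ and $M_G(x)=M_G(y)$ with no further work. Your uniform claim that every such $\pi$ crushes to a legitimate rectangle on $G$ sidesteps this and would force you to verify the full case analysis for crushed pentagons (emptiness, decoration content, extra dots on the boundary), which is unnecessary once the confinement is noticed.
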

\begin{proof}
  Arguments are essentially the same as in the proof of Lemma \ref{lem:Crushing_Filtration}.
  Nevertheless, a few details are different.
  
  An empty rectangle $\rho$ on $G_s$ containing no decoration gives rise to an empty rectangle $\widetilde{\rho}$ on $G$ which may contain $O^*$ or $X^*$.
  The latter is not involved in the computation of $M_G$ but the former is.
  However, the proof of Proposition \ref{lem:Crushing_Filtration} reduces the reasoning to the study of non ripped rectangles \ie rectangles which do not intersect the leftmost column nor the uppermost row.
  Consequently, $O^*$ is not involved in the computation.
  Moreover, such rectangles are not crossed by the singular grid lines and the number of dots they contain can hence be deduced as needed.
  
  Because of the four crushed decorations, no rectangle can be ripped in four pieces.

  \saut
  
  The only remaining grid polygons involved in $\widetilde{\p}^*$ are pentagons with a peak in $S$.
  But, because of the two extremal decorations in $S$, any such polygon $\pi$ containing no decoration must lie entirely in the two crushed rows.
  If $\pi$ is connecting $x$ to $y$, then $\widetilde{x}=\widetilde{y}$ and $M_G(x)=M_G(y)$.

  \saut
  
  The filtration induced by $M_G$ is then respected and the associated graded differential is as stated. 
\end{proof}

In order to simplify the proof, we need to define a last filtration on $\big(CV^-(G_S),\widetilde{\p}_\gr\big)$.
We denote by $\alpha'$ (resp. $\beta'$) the interior of the intersection of $\alpha$ (resp. $\beta$) with the complement of the two crushed rows.
Then we can define a grading $\kappa$ for all generators $x$ of $CV^-(G_S)$ by
$$
\kappa(x)=\#(x\cap\beta') - \#(x\cap\alpha').
$$
It is easy to check that the filtration associated to $\kappa$ is respected by $\widetilde{\p}_\gr$.
We denote by $\widetilde{d}_\gr$ the associated graded differential.
It differs from $\widetilde{\p}_\gr$ by forbidding the following two grid polygons:
$$
\dessin{2.85cm}{LastFilt1} \hspace{2cm} \dessin{2.85cm}{LastFilt2}.
$$

\begin{lemme}
  \label{lem:Acyclic_Subcomplexes}
  The chain complex $\big(CV^-(G_s),\widetilde{d}_\gr\big)$ is acyclic.
\end{lemme}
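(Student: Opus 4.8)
The strategy is to reduce the statement to a small local model near the singular loop, in which $\widetilde{d}_\gr$ has become rigid enough to be written down completely, and then to recognise that model as a direct sum of contractible two-step complexes; equivalently, to exhibit an explicit null-homotopy of the identity. The point is that the successive passages to associated graded complexes have trimmed the differential down to something that only ``sees'' the RoCs created together with the loop.

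First I would use Lemma~\ref{lem:Crushing_Filtration_2}, its proof, and the definition of $\kappa$ to pin down exactly which grid polygons contribute to $\widetilde{d}_\gr$: only polygons supported in the RoCs crushed by the procedure, carrying no decoration, and not of one of the two shapes forbidden by the $\kappa$-filtration (in particular only pentagons through the peak in $S$, and those lying in the two crushed rows). Every surviving polygon therefore leaves untouched the positions of all dots lying outside the crushed region. Consequently $\big(CV^-(G_S),\widetilde{d}_\gr\big)$ splits as a direct sum, indexed by the configurations of the dots away from the crushed region, of finite-rank subcomplexes $K_c$; fixing $c$ constrains only the dots inside the crushed RoCs and those lying on the winding arcs $\alpha,\beta$ of $S$.

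Next I would write $K_c$ down explicitly. Each remaining free dot can occupy only a short list of positions, neighbouring ones being joined by a unique empty connecting polygon: a length-one rectangle of the stabilisation type for a dot in a crushed row or column, and the distinguished pentagon through the peak of $S$ for the dot of $S$ on a non-crushed grid line. The upshot is that $K_c$ is isomorphic to a tensor product, over these independent local choices, of two-step complexes of the form $\Z\xrightarrow{\ \pm 1\ }\Z$, each nonzero arrow being the signed count of the corresponding canonical polygon. Every such factor is acyclic, hence so is each $K_c$, and therefore so is $\big(CV^-(G_S),\widetilde{d}_\gr\big)$. The same content can be packaged as an explicit homotopy $H$ with $\widetilde{d}_\gr\circ H+H\circ\widetilde{d}_\gr=\Id$, assembled from these canonical polygons; the two polygons discarded when passing from $\widetilde{\p}_\gr$ to $\widetilde{d}_\gr$ are exactly the terms whose survival would break this identity.

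The steps, in order: (1) from the proofs of Lemmas~\ref{lem:Crushing_Filtration} and~\ref{lem:Crushing_Filtration_2} and the definition of $\kappa$, list the polygons surviving in $\widetilde{d}_\gr$; (2) read off the direct-sum decomposition into the local models $K_c$; (3) identify each $K_c$ with a tensor product of acyclic two-step complexes, or build and check the homotopy, invoking Proposition~\ref{prop:Chain_Complex} for $\widetilde{d}_\gr^2=0$ and for the cancellation of the cross-terms; (4) conclude. I expect the main obstacle to be steps~(1)--(3): making sure that, after all the passages to associated graded objects, precisely the canonical polygons remain, and that the signs make the surviving maps isomorphisms rather than zero. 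This is the same delicate case analysis as in Lemma~\ref{lem:Crushing_Filtration} and in the stabilisation argument of \cite{MOST}, and it is where the auxiliary grading $\kappa$, designed to remove the two offending polygons, earns its place.
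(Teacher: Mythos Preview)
Your approach matches the paper's: the proof there consists of the single sentence ``the graded chain complex $(CV^-(G_s),\widetilde{d}_\gr)$ can be split into a direct sum of 35 acyclic subcomplexes,'' with the explicit list deferred to Appendix~A of \cite{These}. Your steps~(1)--(2), splitting by the dot configuration outside the crushed region, are exactly this decomposition.

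One caution on step~(3): the hope that each local piece $K_c$ is a clean tensor product of two-step complexes $\Z\xrightarrow{\pm1}\Z$ is likely too optimistic. The number 35 already signals that the local degrees of freedom are not independent --- the positions of the dots on the winding arcs and inside the crushed RoCs interact through the available polygons, so the $K_c$ need not factor as products. In the thesis the 35 subcomplexes are written out and checked individually; some are presumably short staircases or zigzags rather than products of intervals. Your acknowledgement that the case analysis is ``the main obstacle'' is well placed: that enumeration \emph{is} the proof, and it does not collapse to a one-line tensor-product argument.
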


Actually, the graded chain complex $(CV^-(G_s),\widetilde{d}_\gr)$ can be split into a direct sum of 35 acyclic subcomplexes.
They are listed in Appendix A of \cite{These}.

%%% Local Variables: 
%%% mode: latex
%%% TeX-master: "Singular"
%%% End: 

%%% Local Variables: 
%%% mode: latex
%%% TeX-master: "Singular"
%%% End: 

\saut
% Computations
\section{Computations}
\label{sec:Computations}

The author has written a program in OCaml which computes graded singular link Floer homologies $\widehat{HFV}$ with $\fract\Z/{2\Z}$--coefficients.
The results of some computations have been gathered in tables \ref{fig:Computations}(a)--\ref{fig:Computations}(c)\footnote{other computations can be found in \cite{These}}.
For each singular knot with oriented double points, we give the Poincar\'e polynomial of the associated homology.
The $t$ and $q$ variables correspond, respectively, to the Maslov and the Alexander gradings.

Orientations for double points are denoted by comparing them to the orientation induced by the plane where the diagrams are drawn.
A plus sign means that the two orientations coincide and a minus sign that they do not.

\begin{figure}[h]
  $$
  \hspace{-.3cm}
  \begin{array}{cc}
    \begin{minipage}{.5\linewidth}
      \begin{center}
        \begin{tabular}{|c|c|}
          \hline
          $\dessin{1.2cm}{3_1s}\ \dessin{1.2cm}{3_1++}\ \dessin{1.2cm}{3_1+-}$
          &
          $q^{-1}(1+tq)^2$\\
          \hline
          $\dessin{1.2cm}{3_1+++}$
          &
          $0$\\
          \hline
          $\dessin{1.2cm}{3_1++-}$
          &
          $q^{-1}(1+t)(1+tq)^2$\\
          \hline
        \end{tabular}
      \end{center}

      \begin{center}
        (a) Singularizations of the trefoil knot
      \end{center}
      
      \vspace{.5cm}
      
      \begin{center}
        \begin{tabular}{|c|c|}
          \hline
          $\dessin{1.68cm}{9_44s2}$
          &
          $t^{-2}q^{-2}(1+t+tq)(1+tq)^2$\\
          \hline
        \end{tabular}
      \end{center}

      \begin{center}
        (b) A singularization of the knot $9_{44}$
      \end{center}
      
    \end{minipage}
    
    &
    
    \begin{minipage}{.5\linewidth}
      \begin{center}
        \begin{tabular}{|c|}
          \hline
          $\dessin{1.2cm}{4_1++2}$\\
          \hline
          $\dessin{1.32cm}{5_2++2}\ \dessin{1.32cm}{5_2ss3}\ \dessin{1.32cm}{5_2++-3}$\\
          \hline
          $\dessin{1.32cm}{5_2ssss1}\ \dessin{1.32cm}{5_2++++2}\ \dessin{1.32cm}{5_2+++-}\ \dessin{1.32cm}{5_2++--2}$\\
          \hline
          $\dessin{1.2cm}{DoubleTrefle1}\ \dessin{1.2cm}{DoubleTrefle2}$\\
          \hline
        \end{tabular}
      \end{center}

      \begin{center}
        (c) Singular knots with null singular\\
        link Floer homology
      \end{center}
      
    \end{minipage}
  \end{array}    
  $$
  \caption{Tables of singular link Floer homologies}
  \label{fig:Computations}
\end{figure}
\saut

These tables confirm that the homology depends on the choice of an orientations for the double points.
Moreover, Figure \ref{fig:Computations}(b) provides a counterexample to the symmetry $\widehat{HF}_i^j(L)\simeq\widehat{HF}_{i-2j}^{-j}(L)$ which holds in the regular case.

\saut

Together with other computations, these lead to the following conjectures:

\begin{conjecture}
  The singular link Floer homology $\widehat{HFV}\left(\ \dessin{1cm}{Torsion}\right)$ of a singular connected sum of two links is null.
\end{conjecture}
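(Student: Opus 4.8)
I would approach this via the mapping-cone description of Corollary~\ref{cor:Mapping_Cone}, combined with a K\"unneth-type decomposition. First I would present the singular connected sum $L=L_1\#L_2$, joined along a single double point $s$, by a grid $G$ in which $s$ is realised as a singular RoC confined to a small region, the rest of $G$ being a ``block sum'' of a grid $G_1$ for $L_1$ and a grid $G_2$ for $L_2$; by Proposition~\ref{prop:SLink->SGrid} and Theorem~\ref{theo:Elementary_Moves} such a presentation can always be arranged. Writing $G_+$ and $G_-$ for the two admissible desingularisations of $s$, Corollary~\ref{cor:Mapping_Cone} gives $CV^-(G)\simeq\Cone\big(f_s\colon CV^-(G_+)\to CV^-(G_-)\big)$, so it would suffice to prove that $f_s$ is a quasi-isomorphism: the cone of a quasi-isomorphism is acyclic, and since acyclicity of $\big(CV^-(G),\p^-_G\big)$ passes to the successive quotients by the variables $U_O$ that compute $\widehat{HFV}$, the homologies $HV^-$ and $\widehat{HFV}(L_1\#L_2)$ would then vanish.

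\noindent\textbf{The two resolutions.} Next I would check that $G_+$ and $G_-$ both present the ordinary connected sum $L_1\#L_2$ (the junction arcs being reconnected in the two different ways inside a ball), and that $G_-$ is obtained from $G_+$ by a finite sequence of \emph{regular} elementary moves acting only on the decorations near $s$ (commutations and (de)stabilisations). In particular $CV^-(G_+)$ and $CV^-(G_-)$ have isomorphic homology; the real content is that the specific map $f_s$ itself realises this isomorphism.

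\noindent\textbf{The main obstacle.} The heart of the argument — the step I expect to be genuinely hard, which is presumably why the statement is still a conjecture — is to show that, up to chain homotopy, $f_s$ agrees with the composition of the commutation and (de)stabilisation quasi-isomorphisms $\phi_{\beta\gamma}$, $F$, \dots\ of Section~\ref{ssec:Homology_Consistency} realising the moves above. The mechanism would be as in Lemmas~\ref{lem:Commutation_Isomorphism} and~\ref{lem:Phi_Quasi_Isomorphism}: filter $CV^-(G_\pm)$ by the desing grading so that on the associated graded $f_s$ becomes a map of \emph{regular} grid complexes concentrated near a nugatory-type crossing of a connected-sum diagram, then match its polygon count, triangle by triangle and bigon by bigon, with the model maps of \cite{MOST}. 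The delicate points will be the sign bookkeeping through the compass convention and the sign map $\e$, and the control of the ``long'' empty polygons: I would argue that, since the $L_1$-block and $L_2$-block meet only along $s$, every empty grid polygon contributing to $f_s$ stays inside one block, which localises the problem so the classical analysis of \cite{MOST} takes over.

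\noindent\textbf{Fallback and sanity check.} If identifying $f_s$ with a product of known move maps turns out to be intractable, a more hands-on route would mirror Section~\ref{ssec:Properties_Acyclicity}: crush the extra RoCs created by the connecting region to build successive filtrations whose final associated graded differential is supported on a short list of polygons local to the junction, and verify — as in Appendix~A of \cite{These} for the singular loop — that the resulting graded complex splits as a direct sum of acyclic two-step complexes. The heuristic behind all of this is that the Vassiliev relation~(\ref{eq:Vassiliev}) forces $\lambda=0$ at the connecting double point because both of its resolutions represent $L_1\#L_2$; the conjecture asserts that this vanishing is already realised at the chain level, not merely after taking Euler characteristics.
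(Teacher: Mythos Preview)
The statement you are addressing is a \emph{conjecture} in the paper, not a theorem: the author gives no proof, only computational evidence (Figure~\ref{fig:Computations}) and the remark that it ``can be seen as a generalization of the acyclicity for links with a singular loop.'' There is therefore no proof in the paper to compare your proposal against.

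As a strategy your outline is reasonable and you are honest about where the real difficulty lies. Two technical cautions are worth recording. First, your claim that acyclicity of $\big(CV^-(G),\p^-_G\big)$ ``passes to the successive quotients by the variables $U_O$ that compute $\widehat{HFV}$'' skips a step: passing from $HV^-$ to $HV$ is taking the \emph{associated graded} with respect to the Alexander filtration, and acyclicity of a filtered complex does not in general imply acyclicity of its associated graded. You would need $f_s$ to be a \emph{filtered} quasi-isomorphism (which the model maps $\phi_{\beta\gamma}$, $F$ of Section~\ref{ssec:Homology_Consistency} are), or else work at the hat level from the outset as the paper does in Section~\ref{ssec:Properties_Acyclicity}. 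Second, your localisation claim that ``every empty grid polygon contributing to $f_s$ stays inside one block'' is not obvious on the torus $\T_G$: emptiness only forbids interior dots of $x$, not passage through the other block, and rectangles can wrap around. This is exactly where the long polygons you flag as delicate would enter, so the reduction to a local problem near $s$ needs a genuine argument.

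Your fallback via crushing filtrations is in fact closer in spirit to how the paper handles the singular-loop case (Lemmas~\ref{lem:Crushing_Filtration_2} and~\ref{lem:Acyclic_Subcomplexes}), and may well be the more tractable route if the conjecture is to be attacked along the lines of this paper.
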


\begin{conjecture}
  The singular link Floer homology of a purely singular link, \ie a link which admits a planar diagram with only singular crossings, is null as soon as the double points are oriented accordingly to the orientation of the plane where such a purely singular diagram is drawn.
\end{conjecture}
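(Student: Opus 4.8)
The plan is to exploit how rigid the differential $\widetilde{d}_\gr$ has become after the chain of reductions in Lemmas \ref{lem:Crushing_Filtration} and \ref{lem:Crushing_Filtration_2} together with the $\kappa$--grading. By Lemma \ref{lem:Crushing_Filtration_2} the graded differential $\widetilde{\p}_\gr$ counts only grid polygons supported in the crushed RoCs which carry no decoration — empty rectangles inside those RoCs together with empty pentagons whose peak lies in $S$ and which sit in the two crushed rows — and passing further to $\widetilde{d}_\gr$ deletes the two exceptional polygons displayed just before the statement. In particular every polygon occurring in $\widetilde{d}_\gr$ lives in an arbitrarily small neighbourhood of the singular loop: it only slides the dots sitting on the crushed grid lines and the dot lying on $\alpha\cup\beta$, while leaving all the other dots of a generator fixed.

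First I would make this localisation precise. For a generator $x$ of $CV^-(G_S)$, its restriction to the complement of the crushed region is untouched by $\widetilde{d}_\gr$; hence the set of generators sharing a given such \emph{outside pattern} spans a subcomplex, and this subcomplex is, up to a canonical identification, independent of the chosen outside pattern — it only records the local combinatorics near $S$, namely the resolution ($0$ or $1$) of the singular column $S$, the position of the winding--arc dot, and the positions of the dots forced onto the crushed grid lines. Thus $\big(CV^-(G_s),\widetilde{d}_\gr\big)$ is a direct sum, indexed by the outside patterns, of copies of one finite \emph{local complex}, and it suffices to prove that this local complex is acyclic.

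The next step is the enumeration. Because of the two extremal decorations $O^*$ and $X^*$ introduced just before Lemma \ref{lem:Crushing_Filtration_2}, and of the prescribed pattern of the four decorations of $S$, the admissible local states are confined to finitely many configurations; grouping them according to which unique empty decoration--free rectangle or pentagon of $\widetilde{d}_\gr$ can be applied, the local complex splits as a direct sum of $35$ pieces. Each piece is then readily checked to be acyclic — in the simplest cases it is a two-term complex $\Z \xrightarrow{\,\pm 1\,}\Z$ pairing a state with the one obtained by pushing a single dot across that polygon, the sign assignment guaranteeing the map is an isomorphism — and the point of having descended to the $\kappa$--graded differential $\widetilde{d}_\gr$ rather than to $\widetilde{\p}_\gr$ is precisely that it kills the two polygons which would otherwise obstruct this clean pairing. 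Since a direct sum of acyclic complexes is acyclic, Lemma \ref{lem:Acyclic_Subcomplexes} follows.

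The main obstacle will be this last step: producing the list of $35$ local configurations without omission or duplication, and checking in each that exactly one allowed empty decoration--free polygon is available and that the induced pairing really has value $\pm 1$, so that no spurious kernel or cokernel survives. This is the case analysis that the text defers to Appendix A of \cite{These}; everything else is formal. Once the Lemma is in hand, running the successive filtrations backwards — acyclic associated graded implies acyclic total complex, applied in turn to the $\kappa$--filtration, the crushing filtration $M_G$, the Alexander/polynomial-degree/desing filtration, and the filtration counting the positively resolved singular RoCs distinct from $S$ — yields $H_*\big(CV^-(G_S),\widehat{\p}\big)=\widehat{HV}(G_S)\equiv 0$, which is the announced vanishing of $\widehat{HFV}$ for links with a singular loop.
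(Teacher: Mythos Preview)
Your proposal does not address the statement at hand. What you have sketched is a proof of Lemma \ref{lem:Acyclic_Subcomplexes} and, by unwinding the filtrations, of the vanishing of $\widehat{HFV}$ for links containing a \emph{singular loop} --- that is, the result of Section \ref{ssec:Properties_Acyclicity}. The statement you were asked about is the second Conjecture, which concerns \emph{purely singular links}: links admitting a planar diagram whose crossings are \emph{all} singular. These are two unrelated hypotheses. A singular loop is a local configuration (a small arc with both endpoints at a single double point), while a purely singular link is a global restriction on the crossing types and may have arbitrarily many double points, none of which needs to carry a loop.

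Moreover, this second statement is presented in the paper as a \emph{conjecture}, supported only by the computational evidence of Section \ref{sec:Computations}; the paper does not prove it and gives no outline of an argument. The filtration machinery you invoke is tied specifically to the $(1\times 1)$--subgrid replacement that creates a singular loop (the grid $G_S$, the crushed column and two rows, the special decorations $O^*$ and $X^*$), and there is no reason it should say anything about a general purely singular diagram. In short: your write-up is a reasonable outline of the paper's own proof of the singular-loop vanishing, but it is aimed at the wrong target and does not constitute progress on the conjecture.
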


The first conjecture, which can be seen as a generalization of the acyclicity for links with a singular loop, is essentiel for the purpose of a categorification of Vassiliev theory.
The second one would be a first step towards some kind of finite type properties.

%%% Local Variables: 
%%% mode: latex
%%% TeX-master: "Singular"
%%% End: 

\saut
% Bibliographie
\bibliographystyle{amsalpha}
\bibliography{Singular}
\addcontentsline{toc}{part}{Bibliography}

\end{document}